\documentclass[11pt]{ip-journal}

\usepackage{amssymb,amsfonts, amsmath}

\usepackage{mathtools}
\usepackage{color}

\newtheorem{theorem}{Theorem}[section]
\newtheorem{claim}[theorem]{Claim}
\newtheorem{corollary}[theorem]{Corollary}
\newtheorem{definition}[theorem]{Definition}
\newtheorem{example}[theorem]{Example}
\newtheorem{lemma}[theorem]{Lemma}
\newtheorem{proposition}[theorem]{Proposition}
\newtheorem{remark}{Remark}[section]
\newtheorem{conjecture}{Conjecture}[section]

\bibliographystyle{plain}

\newcommand{\dd}{\partial \overline{\partial} }

\title [Line arrangements] {Calabi-Yau metrics with conical singularities along line arrangements}
\author[M. de Borbon]{Martin de Borbon}
\author[C. Spotti]{Cristiano Spotti}
\address{King's College London}
\email{martin.deborbon@kcl.ac.uk}
\address{Aarhus University}
\email{c.spotti@qgm.au.dk}

\begin{document}

\begin{abstract}
	Given a finite collection of lines \(L_j \subset \mathbb{CP}^2\) together with real numbers \( 0 < \beta_j < 1 \) satisfying natural constraint conditions, we show the existence of a Ricci-flat K\"ahler metric \(g_{RF}\) with cone angle \(2\pi\beta_j\) along  each line \(L_j\)  asymptotic to a polyhedral K\"ahler cone at each multiple point. Moreover, we discuss a Chern-Weil formula that expresses the energy of \(g_{RF}\) as a logarithmic Euler characteristic with points weighted according to the volume density of the metric.
\end{abstract}

\maketitle

\section{Introduction}

The study of singular K\"ahler-Einstein metrics goes back to Yau's seminal work on the Calabi conjecture \cite[p.389]{Yau}; see also \cite{YaurolePDE}, \cite{YauChengChern} and \cite{yauasterisque}. Extensions to the non-compact complete setting go back to \cite{YauChengFeff} and \cite{TianYauI, TianYauII}. In this paper we analyse Calabi-Yau metrics with conical singularities at a union of complex lines and we provide a detailed analysis of their tangent cones at multiple points.

Consider a weighted line arrangement \( \mathcal{L} = \{ (L_j, (1-\beta_j)) \}_{j=1}^n \) in \( \mathbb{CP}^2\). We assume that \(L_j \neq L_k\) if \(j \neq k \) and that \( 0 < \beta_j < 1 \) for all \(j\). Let \( S = \cup_i \{x_i\} \) be the finite set of points where two or more lines intersect. Elements in \(S\) are referred as the multiple points. Write \( \{ (L_{i_k}, (1-\beta_{i_k})) \}_{k=1}^{d_i} \subset \mathcal{L} \) for the lines going through \(x_i \in S\), where the number of lines \(d_i \geq 2\) is the multiplicity of \(x_i\). We assume that for any multiple point with \(d_i \geq 3\) the Troyanov  conditions are satisfied:
\begin{equation} \label{cond1}
\sum_{k=1}^{d_i} (1-\beta_{i_k}) < 2
\end{equation} 
\begin{equation} \label{cond2}
1 - \beta_{i_k} < \sum_{l \neq k} (1-\beta_{i_l}), \hspace{2mm} \mbox{for any }  1 \leq k \leq d_i .
\end{equation}
We restrict to the Calabi-Yau (CY) regime:
\begin{equation} \label{cond3}
\sum_{j=1}^{n}(1-\beta_j) = 3 .
\end{equation}

In affine coordinates centred at \(x_i\) we have \(d_i\) complex lines of the arrangement going through the origin in \( \mathbb{C}^2\) which correspond to \(d_i\) points in the Riemann sphere. If \(d_i \geq 3\) the Troyanov constraints \eqref{cond1} and \eqref{cond2} guarantee the existence of a unique compatible spherical metric on \( \mathbb{CP}^1 \) with cone angles \( \{2 \pi \beta_{i_k} \}_{k=1}^{d_i} \) at these points, see \cite{luotian} and \cite{troyanov2}. The spherical metric lifts through the Hopf map to a polyhedral  K\"ahler (PK) cone metric \(g_{F_i} \) on \( \mathbb{C}^2 \) with its apex located at \(0\) and cone angles \(2\pi\beta_{i_k}\) along the \(d_i\) complex lines, see \cite{panov} and also \cite{dB}. If \(d_i =2\) we set the PK cone  \(g_{F_i}\) to be given by the product \( \mathbb{C}_{\beta_{i_1}} \times \mathbb{C}_{\beta_{i_2}} \) of two one dimensional cones. We denote by \(r_i\) the intrinsic distance to the apex of \(g_{F_i}\). Our main result is the following:

\begin{theorem} \label{thm1}
	There is a unique (up to scaling) Ricci-flat K\"ahler metric \(g_{RF}\) on \(\mathbb{CP}^2\) with cone angle \(2\pi\beta_j\) along \(L_j\) for \(j=1, \ldots, n \)  \emph{asymptotic} to \(g_{F_i}\) at \(x_i\) for each \(i\). More precisely, there is \(\mu>0\) and complex coordinates \(\Psi_i\) centred at \(x_i\) such that  
	\begin{equation} \label{asymptotics}
		| \Psi_{i}^{*} g_{RF} - g_{F_i} |_{g_{F_i}} = O (r_i^{\mu}) \hspace{2mm} \mbox{as } r_i \to 0 .
	\end{equation}
\end{theorem}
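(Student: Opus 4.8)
\emph{Overview and set-up.} The plan is to realise $g_{RF}$ as the solution of a complex Monge--Ampère equation in a class of K\"ahler currents carrying the prescribed singular structure, by combining a carefully chosen background metric with the continuity method and a weighted Schauder theory adapted simultaneously to the codimension-two conical singularities along the $L_j$ and to the polyhedral cones $g_{F_i}$ at the multiple points. First I would construct a reference K\"ahler metric $\omega_0$ on $\mathbb{CP}^2$, say in the class $c_1(\mathcal{O}(1))$, with cone angle $2\pi\beta_j$ along $L_j$ and coinciding \emph{exactly} with a fixed rescaling of $g_{F_i}$ on a small affine neighbourhood of each $x_i$; this is possible because each $g_{F_i}$ is a genuine K\"ahler cone and hence admits a local K\"ahler potential, and the finitely many local models are patched, away from $S$, to the standard conical metric along the regular part of the arrangement by cut-offs. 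Because of the Calabi--Yau normalisation \eqref{cond3} the logarithmic canonical class $K_{\mathbb{CP}^2}+\sum_j(1-\beta_j)[L_j]$ vanishes in $H^2(\mathbb{CP}^2;\mathbb{R})$, so there is a positive volume form adapted to the cone angles whose density $f_0$ relative to $\omega_0^2$ is H\"older in the conical sense along the $L_j$ and \emph{vanishes identically} near every $x_i$ (there $\omega_0$ is already Ricci-flat). Ricci-flatness of $\omega_0+i\dd\varphi$ is then equivalent to the Monge--Ampère equation $(\omega_0+i\dd\varphi)^2=e^{f_0}\omega_0^2$ with the normalisation $\int e^{f_0}\omega_0^2=\int\omega_0^2$.

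\emph{Existence.} I would solve this along the Aubin--Yau continuity path (replacing $f_0$ by $tf_0$ plus the normalising constant, $t\in[0,1]$), in weighted H\"older spaces modelled on $g_{F_i}$ near the multiple points and on the edge spaces of Donaldson--Jeffres--Mazzeo--Rubinstein along the regular lines. Openness reduces to invertibility of the linearisation $\Delta_{\omega_t}$ in these spaces; near $x_i$ this is the mapping theory of the Laplacian of the cone $g_{F_i}$, which is Fredholm for weights avoiding the discrete set of indicial roots coming from separation of variables on the link. Closedness is the a priori estimates: a global $C^0$ bound (from a Ko\l odziej-type pluripotential estimate, or a barrier argument against the cone models), after which the higher-order estimates are local --- along $L_j\setminus S$ they follow from the conical Schauder theory of Donaldson--Jeffres--Mazzeo--Rubinstein applied to $(\omega_0,f_0)$, and near each $x_i$, where $f_0\equiv 0$ so that $\varphi$ solves a homogeneous Monge--Ampère equation for a metric close to $g_{F_i}$, they follow from the weighted Schauder estimates for the linearisation on $g_{F_i}$ together with an Evans--Krylov bound and a nonlinear bootstrap. (Alternatively one could first produce a finite-energy weak solution by the variational method of Berman--Boucksom--Guedj--Zeriahi and then bootstrap its local regularity, but the continuity method is preferable here because it directly supplies the uniform control needed for the asymptotics.)

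\emph{Uniqueness and the tangent cone at $x_i$.} Two Ricci-flat metrics with the stated singularities in the same cohomology class have bounded local potentials, so the difference of their potentials is a bounded $\omega$-plurisubharmonic function annihilated by the homogeneous Monge--Ampère operator; by the comparison principle of pluripotential theory it is pluriharmonic, hence constant, and the metrics coincide up to the overall scaling. For the asymptotics, note that $\omega_{RF}$ is Ricci-flat with conical singularities and, by the existence step, is $C^0$-close to $g_{F_i}$ on small balls around $x_i$; its tangent cone at $x_i$ is therefore a two-dimensional Ricci-flat polyhedral cone through the same $d_i$ lines with the same cone angles, hence --- by the Troyanov uniqueness recalled above, see \cite{luotian,troyanov2}, together with the local structure of two-dimensional conical Ricci-flat metrics --- it is exactly $g_{F_i}$ (for $d_i=2$, the explicit product $\mathbb{C}_{\beta_{i_1}}\times\mathbb{C}_{\beta_{i_2}}$).

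\emph{The rate, and the main obstacle.} The delicate point is to upgrade this convergence to the polynomial decay \eqref{asymptotics}. I would run a scaling/iteration argument in the spirit of Cheeger--Tian --- equivalently, an implicit-function-theorem estimate for the Monge--Ampère equation linearised at the cone --- in which the obstruction to improving the approximation at each dyadic scale is controlled by an indicial root $\mu\in(0,2)$ of the linearised operator, determined by the spectral gap of the link metric above the roots already absorbed by rescaling and by an affine change of coordinates. Establishing this gap, together with the accompanying Fredholm/indicial package, is where the real work lies: the link is a metric on $S^3$ that is smooth away from the Hopf circles over the $d_i$ marked points, where it acquires its own transverse cone singularities, and since in general the spherical metric on $\mathbb{CP}^1$ is not a quotient this link is not an orbifold, so the spectral analysis must be carried out directly on this singular Einstein $3$-manifold rather than pulled back to a finite cover. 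Once the gap is known the iteration closes, producing the exponent $\mu>0$ and the affine coordinates $\Psi_i$ in \eqref{asymptotics}.
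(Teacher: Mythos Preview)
Your overall architecture (continuity method in weighted H\"older spaces adapted both to the edge structure along $L_j^\times$ and to the PK cone structure at the $x_i$) matches the paper's, but there are two genuine gaps and two points where the paper's route differs and is simpler.

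\textbf{The Ricci potential does not vanish near $x_i$.} Your claim that $f_0\equiv 0$ near each multiple point is false. In affine coordinates, $\Omega\wedge\overline\Omega$ carries a factor $\prod_{j:\,x_i\notin L_j}|\ell_j|^{2\beta_j-2}$ coming from the lines that \emph{miss} $x_i$; this is smooth and nonvanishing but not constant. Hence near $x_i$ one has $f_0=f_{x_i}+h_i$ with $h_i$ a genuinely nonconstant pluriharmonic function (this is the paper's Lemma~\ref{ricci lemma}). The discrepancy is harmless for the Ricci form (since $\dd h_i=0$) but it means $\varphi$ does not solve a homogeneous Monge--Amp\`ere equation near $x_i$, and it forces the extra step below.

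\textbf{The $C^2$ estimate is the main missing step.} You assert that after a $C^0$ bound the higher estimates are local, but the uniform metric equivalence $C^{-1}\omega_0\le\omega_t\le C\omega_0$ is a global second-order estimate that Evans--Krylov alone does not supply. The paper obtains it via the Chern--Lu inequality, which needs (i) a uniform \emph{upper} bound on the bisectional curvature of the reference metric and (ii) a uniform \emph{lower} bound on $\mathrm{Ric}(\omega_t)$ along the path. Point (i) is nontrivial for a conical metric glued to a flat cone and is established by a direct curvature computation (Section~\ref{reference metrics}). Point (ii) fails for your $\omega_0$: the standard conical reference has Ricci curvature unbounded below along $L_j^\times$, so $\mathrm{Ric}(\omega_t)=(1-t)\,\mathrm{Ric}(\omega_0)$ is unbounded below as well. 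The paper fixes this with a two-step construction (Lemma~\ref{initial metric lemma}): first use the linear isomorphism (Proposition~\ref{linearthm}) and the implicit function theorem to produce an \emph{initial} metric $\omega_0=\omega_{\mathrm{ref}}+i\dd\phi_0$ whose Ricci potential is smooth and constant near each $x_i$, hence with bounded Ricci; only then run the path.

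\textbf{The spectral gap: the paper avoids direct link analysis.} You flag the spectrum of the singular Sasakian link as the hard point. In fact the paper never computes it. By the K\"ahler Ricci-flat cone theory (Lichnerowicz--Obata together with the pluriharmonic decomposition of Conlon--Hein and Hein--Sun, recorded as Lemma~\ref{hom harm lemma}), every homogeneous harmonic function of rate in $(0,2)$ on a PK cone is pluriharmonic, hence the real part of a polynomial in the affine coordinates; at rate $2$ the only $\xi$-invariant piece comes from the transverse automorphisms (overall dilation, and for $d_i=2$ the anisotropic scaling generated by $h_i=|z_1|^{2\beta_{i_1}}-|z_2|^{2\beta_{i_2}}$). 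These are exactly the modes you absorb by ``rescaling and affine change'', and the paper builds them explicitly into the domain~\eqref{T0M} rather than proving a bare eigenvalue gap on the $3$-sphere.

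\textbf{The decay rate is obtained differently.} Rather than a Cheeger--Tian three-annulus iteration, the paper stays inside the PDE framework: a weighted Moser iteration in the style of Joyce gives $\|\overline u_t\|_{0,\,2+\mu'}\le C$ (using the already-established pointwise bound on $\dd\overline u_t$), after which the equation is rewritten as $\Delta_{\omega_0}\overline u_t=(e^{\overline f_t}-1)+\psi$ with $\psi$ quadratic in $\dd\overline u_t$, and the weighted Schauder estimate (Proposition~\ref{estimate cone}) bootstraps to $\|\overline u_t\|_{2,\alpha,\,2+\mu}\le C$. This keeps all constants uniform in $t$, which is what closedness actually requires; a tangent-cone-plus-uniqueness argument would not by itself deliver that uniformity.
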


In algebro-geometric terms our angle constraints given by equations \eqref{cond1} and \eqref{cond3} are equivalent to say that \((\mathbb{CP}^2, \sum_{j=1}^{n}(1-\beta_j)L_j)\) is a log CY klt pair. This pair admits a unique up to scale weak Ricci-flat singular K\"ahler metric \(g_{RF}\), see \cite{EGZ}. This weak solution has locally defined continuous K\"ahler potentials which admit a polyhomogeneous expansion (see \cite{JMR}, \cite{guenancia}) at points of \(  L_j^{\times} \), where
\begin{equation*}
	L_j^{\times} = L_j \setminus \cup_i \{x_i\} .
\end{equation*}

 From this point of view our Theorem \ref{thm1} provides, \emph{under the extra condition given by Equation \eqref{cond2}}, a more precise description of the asymptotics of the metric \(g_{RF}\) at the multiple points of the arrangement. The constraint specified by Equation \eqref{cond2} can be interpreted as a restriction to the case when there is no `jumping' of the tangent cone or, equivalently,  as a stability condition for \(x_i \in S \) thought of as a singularity of the klt pair.

\begin{remark}
	A couple of remarks are in order.
	\begin{itemize}
		\item Our main theorem can be seen as an analogue  in the logarithmic setting of the  result of Hein and Sun for conifold Calabi-Yau varieties \cite{heinsun}. However, from one hand, we \emph{do not assume smoothability conditions} but, on the other, we crucially restrict ourself to the \emph{two dimensional} log case.
		
		\item In complex dimension \(\geq 3\) the curvature of a (non-flat) Calabi-Yau cone metric blows-up (taking arbitrary large positive and negative values) close to its vertex and the argument presented in this paper breaks down. On the other hand, one might still consider higher dimensional polyhedral K\"ahler cones, but the singular set of the known examples is usually quite complicated (e.g. reflection arrangements as considered in \cite{CouwenbergHeckmanLooijenga}) and the linear analysis presented here also fails to cover such configurations.
	\end{itemize}
	
\end{remark}

The asymptotics in Equation \eqref{asymptotics} give us the following:

\begin{corollary}
	The metric completion of \( (\mathbb{CP}^2 \setminus \cup_{j=1}^n L_j, g_{RF}) \) is homeomorphic to \(\mathbb{CP}^2\). The tangent cone at \(x_i\) agrees with \(g_{F_i}\), and the volume density at this point is equal to 
	\begin{equation} \label{vol dens mult points}
	\nu_i = \begin{cases}
	\gamma_i^2 \quad \hspace{5.5mm} \mbox{if } d_i \geq 3 \\
	\beta_{i_1} \beta_{i_2} \quad  \mbox{if } d_i =2 \\
	\end{cases}
	\end{equation}
	with \( 0 < \gamma_i < 1 \) given by
	\begin{equation} \label{angle mult points}
	2 \gamma_i = 2 + \sum_{k=1}^{d_i}(\beta_{i_k} -1) .
	\end{equation}     
\end{corollary}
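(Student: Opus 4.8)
The plan is to deduce all three assertions from the asymptotics \eqref{asymptotics} of Theorem \ref{thm1}, together with the already-known behaviour of $g_{RF}$ near the regular part $L_j^{\times}$ of the arrangement and the explicit geometry of the polyhedral cones $g_{F_i}$. For the homeomorphism statement, recall that near a point of $L_j^{\times}$ the weak solution $g_{RF}$ has a poly-homogeneous potential (\cite{JMR}, \cite{guenancia}) and is uniformly equivalent to the flat model $\mathbb{C}_{\beta_j}\times\mathbb{C}$, while near $x_i$ it is, by \eqref{asymptotics}, a $C^0$-perturbation of the metric cone $g_{F_i}$ over a fixed punctured neighbourhood $\{0<r_i<\epsilon_0\}$ with error $O(r_i^{\mu})$. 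From these two local models one checks that $(\mathbb{CP}^2\setminus\cup_j L_j, g_{RF})$ has finite diameter and is totally bounded, so its metric completion $\overline{M}$ is compact; moreover every $g_{RF}$-Cauchy sequence either stays in a compact subset of the open part or converges, in the Euclidean topology of $\mathbb{CP}^2$, to a point of $\cup_j L_j$, and distinct points of $\mathbb{CP}^2$ remain at positive $g_{RF}$-distance. Hence the identity on $\mathbb{CP}^2\setminus\cup_j L_j$ extends to a continuous bijection $\overline{M}\to\mathbb{CP}^2$, which is a homeomorphism because $\overline{M}$ is compact and $\mathbb{CP}^2$ is Hausdorff.

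For the tangent cone, I would work in the coordinates $\Psi_i$ and apply the dilations $\delta_\lambda$ of the cone $g_{F_i}$: writing $E:=\Psi_i^{*}g_{RF}-g_{F_i}$, which satisfies $|E|_{g_{F_i}}=O(r_i^{\mu})$, one has $\delta_{1/\lambda}^{*}(\lambda^2\,\Psi_i^{*}g_{RF})=g_{F_i}+\lambda^2\,\delta_{1/\lambda}^{*}E$. Since $\mu>0$ the error term tends to $0$ in $C^0_{\mathrm{loc}}(\mathbb{C}^2\setminus\{0\})$, and because the estimate \eqref{asymptotics} is uniform down to the apex the rescaled metrics are uniformly close to $g_{F_i}$ on all annuli $\{r_i\in[2^{-k-1},2^{-k}]\}$; thus $(\mathbb{CP}^2,\lambda^2 g_{RF},x_i)$ converges in the pointed Gromov--Hausdorff sense to $g_{F_i}$ as $\lambda\to\infty$. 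As the limit is exhibited explicitly there is no uniqueness issue, and the tangent cone at $x_i$ is $g_{F_i}$.

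For the volume density, since $g_{F_i}$ is a metric cone the Bishop--Gromov ratio $\mathrm{Vol}_{g_{RF}}(B_r(x_i))/(\pi^2 r^4/2)$ converges, as $r\to 0$, to the volume density of $g_{F_i}$ at its apex, namely $\mathrm{Vol}(Y_i)/(2\pi^2)$ with $Y_i$ the link. When $d_i=2$ the link of $g_{F_i}=\mathbb{C}_{\beta_{i_1}}\times\mathbb{C}_{\beta_{i_2}}$ is the spherical join $S^1_{2\pi\beta_{i_1}}*S^1_{2\pi\beta_{i_2}}$, whose volume is $2\pi^2\beta_{i_1}\beta_{i_2}$, giving $\nu_i=\beta_{i_1}\beta_{i_2}$. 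When $d_i\geq 3$, $g_{F_i}$ is the Hopf lift of the spherical metric on $\mathbb{CP}^1$ with cone angles $2\pi\beta_{i_k}$; the normalization that makes the lift Ricci-flat K\"ahler forces this metric to have constant curvature $4$, so by the Gauss--Bonnet formula for cone surfaces and \eqref{angle mult points} its area is $\pi\gamma_i$, while the generic Hopf fibre of $Y_i$ has length $2\pi\gamma_i$ (equivalently, $g_{F_i}$ restricts to the $2$-dimensional cone of angle $2\pi\gamma_i$ on a generic complex line through the apex, see \cite{panov}, \cite{dB}); hence $\mathrm{Vol}(Y_i)=2\pi^2\gamma_i^2$ and $\nu_i=\gamma_i^2$, which is \eqref{vol dens mult points}.

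The only place demanding genuine care is the uniformity of the estimates near the singular strata used in the first two steps --- that Cauchy sequences, diameters and rescaled balls are controlled all the way down to $L_j$ and to the apices $x_i$ --- but this is exactly the content of \eqref{asymptotics} together with the standard model along $L_j^{\times}$; the remaining ingredients are routine metric geometry and an explicit computation with the polyhedral cones.
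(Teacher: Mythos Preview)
Your proposal is correct and follows exactly the route the paper intends: the corollary is stated without a separate proof, merely as a consequence of the asymptotics \eqref{asymptotics}, and the volume density computations you give are precisely those carried out in Section~\ref{PK section} on PK cones (Gauss--Bonnet area $\pi\gamma_i$, Hopf fibre length $2\pi\gamma_i$, hence link volume $2\pi^2\gamma_i^2$; product case $\nu=\beta_{i_1}\beta_{i_2}$). The metric-completion statement likewise parallels the remark after Definition~\ref{def kmcs} for the smooth-divisor case, with the multiple points handled via \eqref{asymptotics}.
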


\begin{remark}
	Our proof of Theorem \ref{thm1} does not make any substantial use of the fact that the ambient space is \(\mathbb{CP}^2\) and the conical divisor is made up of lines (see Theorem \ref{gen case}). Thus our results  can be extended with minor changes to the more general situation of a \(2\)-dimensional log CY klt pair in which every singular point of the conical divisor is `stable' in the sense that it has a neighbourhood which is mapped to a neighbourhood of the apex of a PK cone via a biholomorphic map that matches the conical sets according to their angles. In the setting of Theorem \ref{thm1}, this condition is guaranteed by Equation \eqref{cond2}.  More generally, our technique extends to cover also, for example, the case of a \emph{cusp singularity} with cone angle \(2\pi\beta\) in the range \(1/6 < \beta< 5/6\). See last section \ref{conjectural section} for more details.
\end{remark}

Let us briefly describe the ideas in the proof of our main Theorem \ref{thm1}.

\subsection*{Outline of proof of Theorem \ref{thm1}}

In  affine coordinates \((\xi_1, \xi_2)\), say,  we have defining equations for the lines  \(L_j  = \{\ell_j =0\} \) given by non-zero affine functions \(\ell_j(\xi_1, \xi_2)\) for \(j=1, \ldots, n\). We have the locally defined holomorphic volume form
\begin{equation} \label{hol vol form def}
\Omega = \ell_1^{\beta_1-1} \ldots \ell_n^{\beta_n-1} d\xi_1 d\xi_2 .
\end{equation}
The real volume form \(dV = \Omega \wedge \overline{\Omega}\) is globally defined and is independent of the choices made up to scaling by a constant positive factor. The requirement that \( \sum_{j=1}^{n} (1-\beta_j) = 3 \) implies that \(dV\) is singular only along the lines \(L_j\) and has poles of the required type. We are looking for a K\"ahler metric\footnote{By standard abuse of language, we identify the K\"ahler metric with its corresponding K\"ahler form: if \(g_{*}\) is the metric then \(\omega_{*} \) is the corresponding \((1,1)\)-form.}  \(\omega_{RF}\) with 
\begin{equation} \label{Ricci flat eq}
	\omega_{RF}^2 = \Omega \wedge \overline{\Omega} .
\end{equation}

First we construct a reference metric \(\omega_{ref}\) which has conical singularities in a H\"older continuous sense as defined by Donaldson \cite{donaldson1} of cone angle \(2\pi\beta_j\) at points of \(L_j^{\times} \) and is isometric to a PK cone (in particular is flat) in a neighbourhood of each multiple point. Around each \(x_i\) we have affine coordinates with \(\Phi_i^{*}g_{ref} = g_{F_i} \). The bisectional curvature of \(g_{ref}\) is uniformly bounded above.

To solve Equation \eqref{Ricci flat eq} we use Yau's continuity path \cite{Yau}, mixing techniques present in different extensions of Yau's work on the Calabi conjecture to the context of: metrics with cone singularities along a smooth divisor (\cite{brendle} and \cite{JMR}), conifolds (\cite{heinsun}) and ALE spaces (\cite{Joyce}).
 
Thus we first develop the relevant linear theory, that is a Fredholm set-up  for the Laplace operator of \(\omega_{ref}\) in weighted H\"older spaces adapted to our singularities. This  parallels the corresponding theory for Riemannian conifolds as developed in \cite{JoyceSL}, \cite{Behrndt}, \cite{heinsun}, \cite{pacini} with the multiple points playing the r\^ole of the conifold points. The arguments in this section rely  in a fundamental way on Donaldson's interior Schauder estimates \cite{donaldson1}. 
The main result in this section is Proposition \ref{linearthm}.

We set up the continuity path and prove the relevant a priori estimates. We let \(T\) be the set of \(t \in [0, 1] \) such that there is \( u_t \) with  \( \omega_t^2 = c_t e^{t f_0} \omega_0^2 \) where \( \omega_t =  \omega_0 + i \dd u_t \), \(\omega_0\) a suitable initial metric and \(\omega_t\) satisfies the asymptotics \eqref{asymptotics} with respect to \(\Phi_{i, t} = \Phi_{i} \circ P_{i}(t)^{-1} \) with \(P_{i}(t) (z_1, z_2) = (\lambda_{i, 1}(t) z_1 , \lambda_{i, 2}(t) z_2 ) \) for some \(\lambda_{i, 1}(t), \lambda_{i, 2}(t) >0 \) and \(\lambda_{i, 1} = \lambda_{i, 2}\) if \(d_i \geq 3\). The theorem follows once we show that the functions \(u_t\) remain uniformly bounded (in a suitable sense) with respect to \(t\). Our initial metric \(\omega_0\) has bounded Ricci curvature and along the continuity path we have \(\mbox{Ric}(\omega_t) = (1-t) \mbox{Ric}(\omega_0) \), therefore we have a uniform lower bound on the Ricci curvature \(\mbox{Ric}(\omega_t) \geq -C \omega_{ref} \). Since \( \mbox{Bisec}(\omega_{ref}) \leq C \) we can appeal to the Chern-Lu inequality  to conclude  that  \( C^{-1} \omega_{ref} \leq \omega_t \leq C \omega_{ref} \), which is the geometric content of the \(C^2\)-estimate and a main point in the proof.
\qed

\subsection*{Applications and perspectives} 

We discuss how the polynomial decay of the CY metric suggests that their energy (i.e., the \(L^2\)-norm of the curvature) can be a-priori computed by considering a log version of the Euler characteristic which takes into account the value of the metric density at the multiple points, see formula \eqref{energy formula}. We compute the logarithmic Euler characteristic in the generic normal crossing case, and moreover we verify that it agrees with previous formulas available in the literature \cite{Tian, panov}.

Finally, we describe the more general picture we expect to hold for K\"ahler-Einstein metrics with conical singularities (KEcs) on surfaces. In particular, we discuss `jumping' tangent cones at singular points of curves and fit our discussion within the theory of normalized volumes of valuations \cite{Li15}. We compute the expected jumping in some typical situations: irreducible case, many lines, non-transverse reducible case, showing how the metric tangent cones should jump when we vary the cone angles parameters.
 
We also propose a differential-geometric derivation of a logarithmic version of the Bogomolov-Miyaoka-Yau inequality, i.e., a generalization of the previously discussed energy formula where we again weight singular points of the curve with their metric density or, equivalently, with the infimum of the normalized volume of valuations, even when jumps are expected to occur, see formula \eqref{CHERNWEILintro}. Somehow remarkably, such formula matches with previous versions obtained by purely algebraic means (see \cite{Langer}), and it would have the advantage to characterize geometrically the equality case with conical metrics of constant holomorphic sectional curvature.

\subsection*{Organization of the paper}
\begin{itemize}
	\item \emph{Background: section \ref{Backround} and appendix \ref{pm sphere section}}. In section \ref{Backround} we recollect preliminary material on K\"ahler metrics with conical singularities along a smooth divisor (following \cite{donaldson1}) and PK cones (following \cite{panov}). The statement of Theorem \ref{thm1} should be clear after this section.  We have also added in appendix \ref{pm sphere section} a short overview on the theory of polyhedral metrics on the projective line, as a sort of precedent to Theorem \ref{thm1}.
	
	\item  \emph{Main result: sections \ref{reference metrics}, \ref{linear thry sct}, \ref{yau cont path sect}}. The proof of Theorem \ref{thm1} take us sections \ref{reference metrics} (reference metric), \ref{linear thry sct} (linear theory) and \ref{yau cont path sect} (continuity method). 
	
	\item \emph{Conjectural picture: sections \ref{chernweilsect} and \ref{conjectural section}}. We discuss the energy formula in section \ref{chernweilsect}, and describe the general picture of KEcs metrics on surfaces, jumping of tangent cones, and applications to a generalized log Bogomolov-Miyaoka-Yau formula in the final section  \ref{conjectural section}.
	
\end{itemize}

\subsection*{Acknowledgments} Both authors are supported by AUFF Starting Grant 24285 and Danish National Research Foundation Grant DNRF95 `Centre for Quantum Geometry of Moduli Spaces'.

\section{Background} \label{Backround}

\subsection{K\"ahler metrics with conical singularities along a smooth divisor} \label{kmcs}

Fix $ 0 < \beta <1$. On $\mathbb{R}^2 \setminus \lbrace 0 \rbrace$ with polar coordinates $(\rho, \theta)$ we have the metric of a cone of total angle $2\pi \beta$ with apex located at $0$
\begin{equation*}
g_{\beta} = d\rho^2 +  \beta^2 \rho^2 d\theta^2 .
\end{equation*}
There is an induced a complex structure on the punctured plane given by an anti-clockwise rotation of angle $\pi/2$ with respect to \(g_{\beta}\). A basic fact is that we can change coordinates so that this complex structure extends smoothly to the origin. Indeed, if
\begin{equation}
z = \rho^{1/\beta} e^{i\theta} \label{CHCO}
\end{equation}
we get $g_{\beta} = \beta^2 |z|^{2\beta -2} |dz|^2$.  We denote by $\mathbb{C}_{\beta}$  the complex numbers endowed with this singular metric. 
On \(\mathbb{C}^2\) we have the product metric \(g_{(\beta)} =  \beta^2 |z_1|^{2\beta -2} |dz_1|^2  +  |dz_2|^2 \)
with cone angle \(2\pi\beta\) along $ \lbrace z_1 =0 \rbrace$. The vector fields
\(v_1 = |z_1|^{1-\beta} \partial_{z_1}\) and \(v_2 = \partial_{z_2}\)
are orthogonal and have constant length with respect to $g_{(\beta)}$.

Let \(X\) be a closed complex surface, \(C \subset X \) a smooth complex curve, $g$ a smooth K\"ahler metric on $X \setminus C$ and $p \in C$. Take complex coordinates  $(z_1, z_2)$ centred at $p$ and adapted to the curve in the sense that $C = \lbrace z_1 =0 \rbrace$. In the complement of $C$ we have smooth functions $G_{i\overline{j}}$ given by $G_{i\overline{j}} = g (v_i, \overline{v}_j)$ for \(1\leq i, j \leq 2 \). 
\begin{definition} \label{def kmcs}
	We say that $g$ has cone angle $2\pi\beta$ along $C$
	if for every \(p\) and adapted complex coordinates \((z_1, z_2)\) as above, the functions  $G_{i\overline{j}}$ admit a  H\"older continuous extension to $C$. We also require the matrix $(G_{i\overline{j}} (p))$ to be positive definite  and that  $G_{1\overline{2}} =0$  when $z_1=0$. In particular, it follows that in adapted complex coordinates \( A^{-1} g_{(\beta)} \leq g \leq A g_{(\beta)} \) for some \(A>0\).
\end{definition}

The fact that Definition \ref{def kmcs} is independent of the choice of adapted complex coordinates can be checked by a straightforward computation, writing \(\tilde{z}_1= f z_1\) with \(f\) a non-vanishing holomorphic function, see page 3 in \cite{dBthesis}.
The metric \(g\) has an associated K\"ahler form \(\omega\) that defines a closed current and therefore a de Rham cohomology class \([\omega]\). Around points in \(C\) there are local K\"ahler potentials \(\omega=i \dd \phi \) with $\phi \in C^{2, \alpha, \beta}$ (see \cite{donaldson1}). The metric \(g\) induces (in the usual way) a distance \(d\) on \(X\setminus C\) and it is straightforward to show that the  metric completion of \((X\setminus C, d)\) is homeomorphic to \(X\). The metric tangent cone at points on the curve is \(g_{(\beta)}\). Furthermore, Definition \ref{def kmcs} is well suited to the development of a Fredholm theory for the Laplace operator of \(g\) in H\"older spaces, see \cite{donaldson1} and section \ref{linear thry sct}. 

There are two types of coordinates relevant in our context. The first is given by adapted complex coordinates $z_1, z_2$. In the second one we replace the coordinate $z_1$ with $\rho e^{i\theta}$ by means of Equation \eqref{CHCO} and extend over the origin by means of the usual change between polar to Cartesian coordinates. We refer to the later as cone coordinates\footnote{In cone coordinates the Laplace operator of \(g_{(\beta)}\) is uniformly elliptic with bounded coefficients which are discontinuous at the origin, see paragraphs after Theorem 3.2 in \cite{haoyin}.}. In other words,  there are two relevant differential structures on $X$ in our situation. One is given by the complex manifold structure we started with, the other  is given by declaring the cone coordinates to be smooth. The two structures are clearly equivalent by a map modelled on \( (\rho e^{i\theta}, z_2) \to (\rho^{1/\beta} e^{i\theta}, z_2)  \) in a neighbourhood of $C$. Note that the notion of a function being H\"older continuous (without specifying the exponent) is independent of the coordinates we take, therefore there is no ambiguity in Definition \ref{def kmcs}.

\subsubsection*{Bisectional curvature}
Let \(g\) be a K\"ahler metric given in local complex coordinates by \(g_{i\bar{j}} = g (\partial/\partial z_i, \partial/ \partial \bar{z}_j)\). Its Riemannian curvature tensor is given by
\begin{equation}\label{eq:curvature}
R_{i\overline{j}k\overline{l}} = - g_{i\overline{j}, k\overline{l}} + g^{p\overline{q}} g_{i\overline{q}, k} g_{p\overline{j}, \overline{l}} 
\end{equation}
where \(R_{i\bar{j}k\bar{l}} = R(\partial_i, \partial_{\bar{j}}, \partial_k, \partial_{\bar{l}})\). 
Let \(u\) and \(v\) be complex vector of type \((1,0)\), following \cite{tianbook} we define the bisectional curvature as 
\begin{equation}\label{eq:bisec}
	\mbox{Bisec}(u, v) = R(u, \bar{u}, v, \bar{v}).
\end{equation}
In particular, if \(u=v\) then \(\mbox{Bisec}(u,u)\) measures the sectional curvature of the complex line spanned by the vector \(u\). The next result is classical and follows from the Gauss-Codazzi equations applied to complex submanifolds of a K\"ahler manifold, see Section 4 in \cite{GoldbergKobayashi}, for completeness we include a proof of it.

\begin{lemma}\label{lem:biseccurv}
	Let \(g\) be a K\"ahler metric on a complex manifold \(X\) and let \(D \subset X\) be a smooth complex hypersurface endowed with its induced K\"ahler metric \(g|_D\). Then the bisectional curvature of \(g|_D\) is less or equal than the bisectional curvature of \(g\). In other words, if \(u, v \in T^{1,0}_p D\) then \(\mbox{Bisec}_{D}(u, v) \leq \mbox{Bisec}_X (u, v)\).
\end{lemma}

\begin{proof}
	Let \((z_1, \ldots, z_n)\) be complex coordinates centred at \(p\) such that \(D=\{z_1=0\}\). By making a change in the \(z_2, \ldots, z_n\) variables we can assume that \((z_2|_D, \ldots,z_n|_D)\) are normal coordinates centred at \(p\) for \(g|_D\), in other words the derivatives \(\partial g_{i\bar{j}} / \partial z_k\) and \(\partial g_{i\bar{j}} / \partial \bar{z}_k\)	vanish at the origin when \(i,j,k \geq 2\).
	Moreover, we can assume that the coordinate vector fields form a unitary frame at the origin and that \(u=\partial/ \partial z_i (0)\) and \(v = \partial / \partial z_j (0)\) with \(i, j \geq 2\). It follows from Equations \eqref{eq:curvature} and \eqref{eq:bisec} that
	\[\mbox{Bisec}_X (u, v) = \mbox{Bisec}_D (u, v) + |g_{i\bar{1}, j}(0)|^2 . \qedhere \]
\end{proof}

\subsubsection*{Reference metrics}
The following lemma provides us with a large class of examples of metrics with conical singularities:

\begin{lemma} \label{CSing}
	Let \(\eta\) be a smooth \((1, 1)\)-form and \(F\) a smooth positive function on \(B\) (the unit ball in \(\mathbb{C}^2\)), assume that \( \eta (\partial_{z_2}, \overline{\partial_{z_2}}) (0) >0 \) and define
	\begin{equation}
	\omega = \eta + i \dd (F|z_1|^{2\beta}) .
	\end{equation}
	There is \(\epsilon > 0\) such that \(\omega\) is a K\"ahler metric on \(\epsilon B\) with cone angle \(2\pi\beta\) along \(\{z_1=0\}\) in the sense of Definition \ref{def kmcs}. Moreover, if \(\eta\) is positive on \(B\) then  the  bisectional curvature of \(\omega\) is uniformly bounded above on \((\epsilon/2) B\).
\end{lemma}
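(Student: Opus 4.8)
The plan is to separate the two assertions, proving the first by a routine local computation near $\{z_1=0\}$ and the second by a more delicate one. For the first, expand $i\dd(F|z_1|^{2\beta})$ using $\partial_{z_1}|z_1|^{2\beta}=\beta|z_1|^{2\beta}/z_1$ and $\partial_{z_1}\partial_{\bar z_1}|z_1|^{2\beta}=\beta^2|z_1|^{2\beta-2}$, and read off the coefficients of $\omega$ in the frame $v_1=|z_1|^{1-\beta}\partial_{z_1}$, $v_2=\partial_{z_2}$. Setting $G_{i\bar j}=\omega(v_i,\bar v_j)$ one obtains
\[
G_{1\bar1}=\beta^2 F+\beta\bigl(z_1\partial_{z_1}F+\bar z_1\partial_{\bar z_1}F\bigr)+|z_1|^2\,\partial_{z_1}\partial_{\bar z_1}F+|z_1|^{2-2\beta}\,\eta(\partial_{z_1},\overline{\partial_{z_1}}),
\]
together with $G_{2\bar2}=\eta(\partial_{z_2},\overline{\partial_{z_2}})+|z_1|^{2\beta}\,\partial_{z_2}\partial_{\bar z_2}F$ and $G_{1\bar2}=O(|z_1|^{\min(\beta,\,1-\beta)})$. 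Since $0<\beta<1$ every exponent occurring is positive, so all three extend H\"older continuously to $\{z_1=0\}$, both in the complex and in the cone differentiable structures of \S\ref{kmcs} (for instance $|z_1|^{2-2\beta}=\rho^{2/\beta-2}$ with $2/\beta-2>0$). On $\{z_1=0\}$ the matrix $(G_{i\bar j})$ is $\mathrm{diag}\bigl(\beta^2 F(0,z_2),\,\eta(\partial_{z_2},\overline{\partial_{z_2}})(0,z_2)\bigr)$ and $G_{1\bar2}=0$; it is positive definite at the origin by the hypotheses $F>0$, $\eta(\partial_{z_2},\overline{\partial_{z_2}})(0)>0$, hence on $\{z_1=0\}\cap\epsilon B$ for $\epsilon$ small. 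Away from the divisor the coefficient $\omega(\partial_{z_1},\overline{\partial_{z_1}})$ is dominated by $\beta^2 F|z_1|^{2\beta-2}$ while $|\omega(\partial_{z_1},\overline{\partial_{z_2}})|^2/\bigl(\omega(\partial_{z_1},\overline{\partial_{z_1}})\,\omega(\partial_{z_2},\overline{\partial_{z_2}})\bigr)$ tends to $0$, so after shrinking $\epsilon$ the form $\omega$ is positive definite there (and, $\eta$ being closed, K\"ahler). With Definition \ref{def kmcs} this settles the first claim.

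For the curvature bound, away from $\{z_1=0\}$ there is nothing to do ($\omega$ is smooth and $(\epsilon/2)B$ is relatively compact), so the issue is the behaviour as $r:=|z_1|\to 0$. Since the $v_i$ have length bounded above and below by the first part, it suffices to bound \emph{from above} the components $R(v_i,\bar v_j,v_k,\bar v_l)$, equivalently the coordinate components $R_{i\bar j k\bar l}=-\partial_i\partial_{\bar j}g_{k\bar l}+g^{p\bar q}(\partial_i g_{k\bar q})(\partial_{\bar j}g_{p\bar l})$ renormalised by the corresponding powers of the $g(\partial_{z_a},\overline{\partial_{z_a}})$. Substituting the expansions of the first part and tracking powers of $r$, the a priori most singular contribution -- of size $r^{-2\beta}$ after renormalisation -- involves only the part of $\omega$ modelled on $i\dd\bigl(F(0,z_2)|z_1|^{2\beta}\bigr)+\eta(0)$, that is on the \emph{flat} product cone $g_{(\beta)}$, and therefore cancels; concretely, writing $g(\partial_{z_1},\overline{\partial_{z_1}})=r^{2\beta-2}H_0+\eta(\partial_{z_1},\overline{\partial_{z_1}})$ with $H_0$ smooth and positive, the identity
\[
-\partial_{z_1}\partial_{\bar z_1}(r^{2\beta-2}H_0)+\frac{|\partial_{z_1}(r^{2\beta-2}H_0)|^2}{r^{2\beta-2}H_0}=-r^{2\beta-2}H_0\,\partial_{z_1}\partial_{\bar z_1}\log H_0=O(r^{2\beta-2})
\]
makes this cancellation explicit.

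The step I expect to be the main obstacle is then controlling the surviving sub-leading terms and, above all, seeing that they are bounded above even when they fail to be bounded below -- which is the reason for the one-sided statement. The mechanism I would aim to verify is that the potentially dangerous terms, which for $\beta>\tfrac{1}{2}$ blow up (after renormalisation) like negative powers of $r$, enter with a definite \emph{negative} sign: for instance, the contribution to the renormalised $R_{1\bar1 1\bar1}$ coming from the off-diagonal correction to $g^{1\bar1}$ behaves along $\{z_1=0\}$ like $\bigl(-\det\eta/\eta(\partial_{z_2},\overline{\partial_{z_2}})\bigr)(\beta-1)^2\,r^{2-4\beta}$, which tends to $-\infty$ as $r\to0$; here one uses $\det\eta>0$, i.e.\ the full hypothesis that $\eta$ is positive on $B$ (not merely $\eta(\partial_{z_2},\overline{\partial_{z_2}})(0)>0$, which sufficed above). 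So the plan is to carry out the bookkeeping for every curvature component, isolate the terms that a priori grow, and show each is of the form $-(\text{positive})\cdot\det\eta$ along the divisor (or else cancels); geometrically this means the holomorphic sectional curvature transverse to the divisor tends to $-\infty$ whenever $\eta$ is not adapted to $\{z_1=0\}$, consistently with the upper bound. A useful preliminary reduction to perform first is to normalise, after a holomorphic change of coordinates adapted to $\{z_1=0\}$ and the addition of a holomorphic $2\,\mathrm{Re}(h)$ to a local potential of $\eta$, so that near the chosen point $\omega=i\dd(|z_1|^2+|z_2|^2)+i\dd\bigl(\widetilde{F}|z_1|^{2\beta}\bigr)$ up to a smooth $(1,1)$-form vanishing to first order; this exhibits $\omega$ as a perturbation of the flat product cone and makes the hierarchy of error terms transparent.
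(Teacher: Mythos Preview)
Your first part---the expansion of $i\dd(F|z_1|^{2\beta})$ and the verification of Definition~\ref{def kmcs}---is correct and coincides with what the paper does.

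For the bisectional curvature bound your route is genuinely different. You propose a direct coordinate computation: expand $R_{i\bar j k\bar l}$, isolate the terms that could diverge as $|z_1|\to 0$, and argue that after renormalisation each such term either cancels against the flat model or enters with a negative sign proportional to $\det\eta$. The paper instead gives a one-line geometric argument: set $\Gamma=\eta+i\dd(F|z_3|^2)$, a \emph{smooth} K\"ahler metric near $0\in\mathbb{C}^3$ (here positivity of $\eta$ is used), and observe that the multivalued map $\Phi(z_1,z_2)=(z_1,z_2,z_1^\beta)$ satisfies $\omega=\Phi^*\Gamma$; the upper bound then follows from the decreasing property of holomorphic bisectional curvature under holomorphic immersions. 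This explains at once why the bound is one-sided and why the full positivity of $\eta$ is needed, and it bypasses all of the sign-tracking you anticipate as ``the main obstacle.''

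Your approach is not wrong---indeed the paper carries out exactly this kind of component-by-component analysis later (Section~\ref{reference metrics}) for the reference metric near points where the smooth part $\eta_0$ \emph{degenerates} and the embedding trick is unavailable, and your leading-order claim for the renormalised $R_{1\bar 1 1\bar 1}$ matches what is found there. But for the lemma as stated, where $\eta>0$, the embedding trick is dramatically shorter; your plan would still owe the verification of the mixed components $R_{1\bar 1 1\bar 2}$, $R_{1\bar 2 1\bar 2}$ (bounded after contraction, but this is bookkeeping rather than a single inequality).
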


\begin{proof}
	The statement about the conical singularities follows from the computation
	$$ i \partial \overline{\partial} (F |z_1|^{2\beta}) = |z_1|^{2\beta} i \partial \overline{\partial} F $$
	$$+ \beta |z_1|^{2\beta-2} \left( \overline{z}_1 idz_1 \overline{\partial}F + z_1 i\partial F d\overline{z}_1 \right) + \beta^2 F |z_1|^{2\beta -2} idz_1 d\overline{z}_1 . $$
	
	If \(\eta>0\) then $ \Gamma = \eta + i\partial \overline{\partial} (F |z_{3}|^2)  $
	defines a smooth K\"ahler metric in a neighbourhood of $0$ in $\mathbb{C}^{3}$. Remove a ray in the complex plane corresponding to the $z_1$ variable and let $ \Phi (z_1, z_2) = (z_1, z_2, z_1^{\beta}). $
	The pullback of $\Gamma$ by $\Phi$ is independent of the branch of $z_1^{\beta}$ that we use and $\omega = \Phi^{*} \Gamma$. The upper bound on the bisectional curvature  follows from Lemma \ref{lem:biseccurv}. This argument is presented in \cite{rubinstein}.
\end{proof}
Lemma \ref{CSing} give us the following class of `standard reference metrics':

\begin{example} \label{REFERENCE}
	Let $\eta$ be a smooth K\"ahler form on the closed complex surface $X$ and let $h$ a smooth Hermitian metric on the line bundle $\mathcal{O}(C)$ where \(C\subset X\) is a smooth complex curve. Let $ s \in H^0(\mathcal{O}(C))$ be such that $C = \{ s=0 \} $. For $\delta>0$ set $\omega = \eta + \delta i \partial\overline{\partial} |s|^{2\beta}_h$. If we take $\delta$ small enough then $\omega$ defines a K\"ahler metric with cone singularities as in Definition \ref{def kmcs} in the same de Rham cohomology class as \(\eta\). The metric $\omega$ is, up to quasi-isometry, independent of the choices of $\eta$, $\delta$, $s$ and $h$. The  bisectional curvature of $\omega$ is uniformly bounded from above.
\end{example}

The upper bound on the bisectional curvature of the reference metric \(\omega\) plays a crucial role in the proof of existence of K\"ahler-Einstein metrics (see \cite{JMR}). 
It follows from the computations in the appendix of \cite{JMR} that the  sectional curvature of \(\omega\) is unbounded below when \(\beta>1/2\); since we have an upper bound on the sectional curvature  it then follows that the Ricci curvature of \(\omega\) must be unbounded below. In general, it is conjectured that the if a K\"ahler-Einstein metric (or more generally a conic metric admitting a polyhomogeneous expansions)  with cone angle \(2\pi\beta\) for some \(\beta>1/2\) has bounded curvature then there is a holomorphic splitting \(TX|_D=TD \oplus N_D\) where \(N_D\) denotes the normal bundle; see \cite{Arezzo} for related work.

\subsection{Polyhedral K\"ahler cones (\cite{panov})}  \label{PK section}

Let \(L_1, \ldots, L_d\) be \(d \geq 2\) distinct complex lines in \( \mathbb{C}^2 \) going through the origin and \( 0 < \beta_j < 1 \) satisfy the Troyanov conditions  \eqref{cond1} and \eqref{cond2}, which we rewrite concisely as: 
\begin{equation} \label{Tcondition}
0 < 2 + \sum_{j=1}^{d} (\beta_j -1) < 2 \min_{j} \beta_j
\end{equation}
if \( d \geq 3 \) and \(\beta_1 = \beta_2 \) if \(d=2\). \footnote{In the context of Theorem \ref{thm1} we use PK cones of the form \(\mathbb{C}_{\beta_1} \times \mathbb{C}_{\beta_2}\) at double points of the arrangement. We \emph{don't require} that \(\beta_1=\beta_2\) at double points; i.e. there might be no metric on \(\mathbb{C P}^1\) with two cone points of the given angles.} 
It follows from \cite{troyanov2} and \cite{luotian} that there is a unique compatible metric \(g\) on \(\mathbb{CP}^1\) with constant Gaussian curvature  \(4\) and cone angle \(2\pi\beta_j\) at the point corresponding to \(L_j\) for \(j= 1, \ldots, d \) (uniqueness is up to pull-back by dilations if \(d=2\)). 

We recall the construction from Section 3.3 in \cite{panov}.
We  cut $g$ along geodesic segments with vertices at all the conical points to obtain a contractible polygon $P$ which is isometrically immersed by its enveloping map in \(S^2(1/2)\), the round \(2\)-sphere of radius \(1/2\). It follows from Gauss-Bonnet that the total area of \(P\) is equal to \(\pi \gamma\) where \( 0 < \gamma < 1 \) is given by
\begin{equation} \label{number c}
2 \gamma = 2 + \sum_{j=1}^{d} (\beta_j -1) .
\end{equation}

Let \( H: S^3 \to \mathbb{CP}^1 \) be the Hopf \(S^1\)-bundle.
Recall that \(H\) is a Riemannian submersion from \(S^3(1)\) to the projective line with its Fubini-Study metric \(\mathbb{CP}^1 \cong S^2(1/2)\).  Since \( P \subset S^2(1/2) \) is  contractible,  the universal cover of $ H^{-1}(P) \subset S^3(1)$ is diffeomorphic to \( P \times \mathbb{R} \) and its inherited constant curvature $1$ metric is invariant under translations in the \( \mathbb{R} \) factor. The planes orthogonal to the fibres define a horizontal distribution, hence a connection $\nabla$ on \( P \times \mathbb{R} \). The holonomy of $\nabla$ along \(\partial P\)  is equal to the parallel translation by twice the  area of \(P\). On the other hand for any $l >0$ we can take the quotient of \( P \times \mathbb{R} \) by \( l \mathbb{Z} \) to obtain a metric $\overline{g}$ of constant curvature $1$ on \( P \times S^1 \) such that all the fibres are geodesics of length $l$. Consider the metric $\overline{g}$ on \( P \times S^1 \) with $l = 2 \mbox{Area}(P)$. With this choice, the holonomy of the fibration along $\partial P$ is trivial and it makes one full rotation. We let $s$ be a parallel section 
of \(P \times S^1\) and identify points \((x, s(x)) \sim (y, s(y))\) whenever \(x, y \in \partial P\) are identified under \(g\). The quotient of \(P \times S^1\) by this identification is homeomorphic to the \(3\)-sphere because the section \(s\) gives one full rotation along \(\partial P\) and we obtain a metric $\overline{g}$ on \(S^3\). The upshot is that \(\overline{g}\) is a constant sectional curvature \(1\) metric on \(S^3\)  with cone angles \(2\pi\beta_j\) along the Hopf circles \(L_j\) and \(H: (S^3, \overline{g}) \to (\mathbb{CP}^1, g)\) is a Riemannian submersion with fibres of  constant length \(2\pi \gamma\) (so the total volume of \(\overline{g}\) is \(2\pi^2\gamma^2\)). 

We write \(Y = S^3\) and consider the Riemannian cone with link \((Y, \overline{g})\), that is the space \(C(Y) = \mathbb{R}_{>0} \times Y \) endowed with the metric \(g_F = dr^2 + r^2 \overline{g} \). There is a parallel complex structure and canonical global complex coordinates \((z, w)\) which give a biholomorphism \(\overline{C(Y)} \cong \mathbb{C}^2 \) and map the singular locus of \(g_F\) to the complex lines \(L_1, \ldots, L_d\) we started with. The radius function is homogeneous in complex coordinates. For every \(\lambda>0\)
\begin{equation}
	r(\lambda z, \lambda w) = \lambda^{\gamma} r(z, w) ,  \hspace{2mm} r \partial_r = \frac{1}{\gamma} \mbox{Re} \left( z \partial_z + w \partial_w \right) .
\end{equation}
We denote the associated K\"ahler form by \(\omega_F = (i/2) \dd r^2 \) and the Reeb vector field by \(\xi= J (r \partial_r) = (1/\gamma) \mbox{Im} \left( z \partial_z + w \partial_w \right) \).

Following \cite{panov} a PK cone \(C(Y)\) is a Polyhedral K\"ahler manifold which is also a metric cone. These are divided into three types according to the behaviour of the orbits of the Reeb vector field \(\xi\):  
\begin{enumerate}
	\item Regular. These agree with the \(g_F\) described above. 
	
	\item Quasi-regular. These are pull-backs of a regular cone by some branched degree \(pq\) map \( (z, w) \to (z^p, w^q) \) with \(p\) and \(q\) positive integers.
	
	\item Irregular. \(C(Y) = \mathbb{C}_{\beta_1} \times \mathbb{C}_{\beta_2} \) with \(\beta_1 / \beta_2 \) irrational. We also allow \(\beta_2 = 1 \).
\end{enumerate}

\begin{remark}
	The division into regular/quasi-regular/irregular  distinguishes the cases when the Reeb vector field \(Jr\partial_r\) generates a free \(S^1\)-action, a locally free \(S^1\)-action or when it has a non-closed orbit respectively. This follows standard terminology in Sasakian geometry, see \cite{Sparks}.
\end{remark}

The outcome is that PK cones correspond to spherical metrics with cone singularities in the projective line by means of the K\"ahler quotient construction with respect to the $S^1$-action generated by the Reeb vector field, except in the irregular case. The metric on \(\mathbb{CP}^1\) is lifted through the Hopf and Seifert maps in the regular and quasi-regular cases respectively.
The \emph{volume density} of a PK cone with link \((Y, \overline{g})\) is defined as
\begin{equation}
	 \nu = \frac{\mbox{Vol}(\overline{g})}{\mbox{Vol}(S^3(1))},
\end{equation} 
where \(\mbox{Vol}(S^3(1))= 2\pi^2 \) is the total volume of the round \(3\)-sphere of radius \(1\). In the regular case \( \nu = \gamma^2 \) with \(\gamma\) given by Equation \eqref{number c}. For quasi-regular cones \( \nu = pq \tilde{\nu} \)  with \( \tilde{\nu} \) the volume density of the corresponding regular cone. In the irregular case \( \nu = \beta_1 \beta_2 \).

\section{Reference metrics} \label{reference metrics}

\subsection{Construction of the reference metric}\label{sect:refmet}
Let \(h\) be a smooth Hermitian metric on \( \mathcal{O}(1) \) and \( s_j \) be  holomorphic sections of \( \mathcal{O}(1) \) with \( s_j^{-1}(0) = L_j \). Let \(\eta\) be a smooth K\"ahler metric on \(\mathbb{CP}^2\). 
\begin{claim} \label{claim ref 1}
	There is a constant \(C>0\) such that
\begin{equation}
	i\dd (|s_1|_h^{2\beta_1} \ldots |s_n|_h^{2\beta_n}) > - C \eta
\end{equation}	
\end{claim}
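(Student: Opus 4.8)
\emph{Strategy.} Set \( u := |s_1|_h^{2\beta_1}\cdots |s_n|_h^{2\beta_n} \). This is a \emph{continuous} function on \( \mathbb{CP}^2 \) (each factor is, since \( t \mapsto t^{\beta_j} \) is continuous on \( [0,\infty) \) and \( |s_j|_h^2 \) is smooth and non-negative), hence bounded; it is smooth and positive on the Zariski-open set \( U_0 := \mathbb{CP}^2 \setminus \bigcup_j L_j \). The plan is to obtain the inequality first on \(U_0\) by a logarithmic computation, and then propagate it across the lines by a removable-singularity argument; the constant \(C\) will depend only on \(\sup u\), on \(\eta\), and on the curvature of \((\mathcal{O}(1),h)\).

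\emph{The regular locus.} On \(U_0\) we have \( \log u = \sum_{j=1}^n \beta_j \log |s_j|_h^2 \). Since for \(j\ne k\) the ratio \(s_j/s_k\) is a nonzero meromorphic function, \( \log |s_j/s_k|_h^2 \) is pluriharmonic on \(U_0\); thus there is a single smooth \((1,1)\)-form \(\rho\) on \(\mathbb{CP}^2\) (a constant multiple of the curvature of \((\mathcal{O}(1),h)\)) with \( i\dd \log |s_j|_h^2 = \rho \) on \(U_0\) for every \(j\), so \( i\dd \log u = \big(\sum_j \beta_j\big)\,\rho \) there. Using the pointwise identity \( i\dd u = u\, i\dd \log u + u\, i\,\partial \log u \wedge \overline{\partial}\log u \) together with \( i\,\partial\log u\wedge\overline{\partial}\log u \ge 0 \), we obtain on \(U_0\)
\[
 i\dd u \;\ge\; \Big(\sum_{j=1}^n \beta_j\Big)\, u\, \rho .
\]
Since \(u\) is bounded and \(\rho\) is a fixed smooth form on the compact manifold \(\mathbb{CP}^2\), there is \(C>0\) with \( \big(\sum_j \beta_j\big)\, u\,\rho \ge -C\eta \) pointwise on \(U_0\); hence \( i\dd u \ge -C\eta \) on \(U_0\).

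\emph{Extending across the lines.} Cover \(\mathbb{CP}^2\) by finitely many coordinate balls \(V_\alpha\) on each of which \( \eta = i\dd \varphi_\alpha \) for some smooth \(\varphi_\alpha\). On \(V_\alpha\) the function \( u + C\varphi_\alpha \) is continuous, hence locally bounded, and by the previous paragraph it is plurisubharmonic on \( V_\alpha \setminus \bigcup_j L_j \). As \( \bigcup_j L_j \cap V_\alpha \) is a proper complex-analytic subset, hence closed and pluripolar, the classical extension theorem for plurisubharmonic functions across closed pluripolar sets shows that \( u + C\varphi_\alpha \) extends to a plurisubharmonic function on \(V_\alpha\); by continuity this extension \emph{is} \( u + C\varphi_\alpha \). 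Therefore \( i\dd(u + C\varphi_\alpha) \ge 0 \), i.e. \( i\dd u \ge -C\eta \), on each \(V_\alpha\), and hence on \(\mathbb{CP}^2\); enlarging \(C\) slightly gives the strict inequality in the statement.

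\emph{The main point, and alternatives.} The only subtlety is the last step: \( i\dd u \) is a priori only a current, and since \(u\) is merely H\"older continuous one must exclude a negative singular part supported on the lines — most delicately at the multiple points, where several \(L_j\) meet — which is exactly what the pluripotential extension theorem handles, using only boundedness of \(u\). Alternatively, one can argue in the spirit of Lemma \ref{CSing}: near any point, factor \( u = \Phi^*(G|w|^2) \), where \( \Phi(z_1,z_2) = (z_1,z_2, f_{i_1}^{\beta_{i_1}},\dots,f_{i_d}^{\beta_{i_d}}) \) maps into a higher-dimensional polydisc, the \(f_{i_k}\) are local holomorphic defining functions for the lines through the point, \(w\) is the product of the last \(d\) coordinates, and \(G\) depends only on \((z_1,z_2)\) and is smooth and positive; since \(w\) is holomorphic, \(G|w|^2\) is smooth, and completing the square in \( i\dd(G|w|^2) = G\, i\, dw\wedge d\bar w + |w|^2 i\dd G + 2\,\mathrm{Re}\big(\bar w\, i\,dw\wedge \overline{\partial}G\big) \) gives \( i\dd(G|w|^2) \ge |w|^2 G\, i\dd\log G \); pulling back (branch-independent and commuting with \(\dd\), as in the proof of Lemma \ref{CSing}) yields \( i\dd u \ge u\, i\dd\log G \ge -C\eta \) locally, and a partition of unity concludes. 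A third, self-contained option is to regularize, \( u_\varepsilon := \prod_j(|s_j|_h^2+\varepsilon)^{\beta_j} \), prove an \(\varepsilon\)-independent bound \( i\dd u_\varepsilon \ge -C\eta \) by the same logarithmic computation together with a uniform bound \( i\dd\log(|s_j|_h^2+\varepsilon) \ge -C_0\eta \) (a short local calculation), and let \(\varepsilon \to 0\).
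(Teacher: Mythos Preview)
Your proof is correct and uses essentially the same approach as the paper: the key step in both is the pointwise inequality \(i\dd u \ge u\, i\dd\log u\) together with the observation that \(i\dd\log|s_j|_h^2\) extends smoothly across \(L_j\) as (minus) the curvature of \(h\), so boundedness of \(u\) gives the lower bound. You add an explicit removable-singularity/extension step (via pluripotential theory, or the alternatives you sketch) to pass from \(U_0\) to all of \(\mathbb{CP}^2\), which the paper leaves implicit; this is a welcome clarification but not a different route.
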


\begin{proof}
If \(u\) is a real positive function we have the identity
	\[ i \dd u = u i \dd \log u + u^{-1} i \partial  u \wedge \overline{\partial} u \geq u i \dd \log u . \]
We let \( u = |s_1|_{h}^{2\beta_1} \ldots |s_d|_h^{2\beta_d}  \) and note that each \(i \dd \log |s_j|_h^2\) is a smooth form on the complement of \(L_j\) which admits a smooth extension to \(\mathbb{CP}^2\) (and therefore is bounded) as the standard representative of \(-2\pi c_1 (\mathcal{O}(1)) \) determined by the curvature of \(h\).
\end{proof}
Let \(\Phi_i\) be affine coordinates centred at the multiple points. Let \(B_i\) be balls centred at \(x_i\) such that \(3B_i\) are pairwise disjoint.
Let \(\chi\) be a smooth function on \(\mathbb{CP}^2\) with values in the interval \([0, 1]\) with \(\chi \equiv 0 \) on \( \overline{\cup_i B_i} \), \(\chi>0\) on \( \mathbb{CP}^2 \setminus \overline{\cup_i B_i} \)  and \(\chi \equiv 1 \) on \(\mathbb{CP}^2 \setminus \cup_i 2B_i\). Let \(r^2\) be the K\"ahler potential for the corresponding \(PK\) cone metric at \(x_i\) which we think as defined on \(3B_i\) via \(\Phi_i\).
\begin{claim} There is \(C>0\) such that \label{claim ref 2}
	\begin{equation} \label{ref met con set}
		 i \dd ( (1-\chi) r^2 + \chi |s_1|^{2\beta_1} \ldots |s_n|_h^{2\beta_n} ) > - C \eta
	\end{equation}
\end{claim}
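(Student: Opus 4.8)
The plan is to reduce \ref{ref met con set} to a pointwise (local) statement and obtain the uniform constant from the compactness of $\mathbb{CP}^2$; everything interesting is concentrated in the transition annuli $2B_i\setminus B_i$, since elsewhere the inequality follows from objects already at hand. On $\mathbb{CP}^2\setminus\bigcup_i 2B_i$ the function in \ref{ref met con set} is exactly $u:=|s_1|_h^{2\beta_1}\cdots|s_n|_h^{2\beta_n}$, so the bound there is Claim \ref{claim ref 1}; and on each $B_i$ (read through $\Phi_i$) it is exactly $r^2$, with $i\dd r^2=\omega_{F_i}\ge 0\ge -C\eta$ as currents. It remains to treat the annuli $2B_i\setminus B_i$, where $\chi$ genuinely interpolates; here I use the freedom in the choice of the $B_i$ to assume, in addition to $3B_i$ being pairwise disjoint, that $\overline{3B_i}$ is disjoint from every line $L_j$ not passing through $x_i$ (a line not through $x_i$ has positive distance to $x_i$).

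Away from $\bigcup_k L_{i_k}$ on $2B_i\setminus B_i$ the function is smooth, so $i\dd$ of it is a bounded $(1,1)$-form and there is nothing to prove. Near a point $p\in L_{i_k}^{\times}$ the key structural observation is that the singular parts of $r^2$ and of $u$ are of one and the same type. Indeed, since $g_{F_i}$ is a flat PK metric, near the smooth point $p$ of its conical locus it is, compatibly with the complex structure, a flat product $\mathbb{C}_{\beta_{i_k}}\times\mathbb{C}$; hence in a holomorphic coordinate $(z_1,z_2)$ adapted to $L_{i_k}=\{z_1=0\}$ one has $\omega_{F_i}=i\dd(|z_1|^{2\beta_{i_k}}+|z_2|^2)$ up to a pluriharmonic (hence smooth) term, so $r^2=H|z_1|^{2\beta_{i_k}}+K$ with $H>0$ and $K$ smooth. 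On the other hand, by the choice of the $B_i$ the only line through $p$ is $L_{i_k}$, so no section $s_j$ with $j\ne i_k$ vanishes near $p$ and $u=G|z_1|^{2\beta_{i_k}}$ with $G>0$ smooth. Therefore in this chart
\begin{equation*}
(1-\chi)r^2+\chi u=F|z_1|^{2\beta_{i_k}}+(1-\chi)K,\qquad F:=(1-\chi)H+\chi G,
\end{equation*}
where $(1-\chi)K$ is smooth and $F>0$, being a convex combination of the positive functions $H$ and $G$.

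The claim now follows from the computation in the proof of Lemma \ref{CSing} applied to $F|z_1|^{2\beta_{i_k}}$: in the expansion of $i\partial\overline{\partial}(F|z_1|^{2\beta})$ the term $|z_1|^{2\beta}\,i\partial\overline{\partial}F$ is bounded, and the only dangerous term is the cross term $\beta|z_1|^{2\beta-2}(\overline{z}_1\,i\,dz_1\,\overline{\partial}F+z_1\,i\,\partial F\,d\overline{z}_1)$, of order $|z_1|^{2\beta-1}$; but since $\beta<1$ and $F>0$ this is dominated by the \emph{positive} term $\beta^2 F|z_1|^{2\beta-2}\,i\,dz_1\,d\overline{z}_1$, which is of the larger order $|z_1|^{2\beta-2}$, and completing the square in the $dz_1$-direction gives $i\dd(F|z_1|^{2\beta_{i_k}})\ge -C\eta$ on the chart; adding back the bounded form $i\dd((1-\chi)K)$ preserves such a bound. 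Covering the compact set $\overline{\bigcup_i(2B_i\setminus B_i)}$ by finitely many such charts and combining with the two easy regions, the worst constant proves \ref{ref met con set}. The heart of the matter — and the only place where anything must actually be checked — is this domination of the cross term by the positive $dz_1\wedge d\overline{z}_1$ term; everything else is bookkeeping, the non-obvious input being simply that the $B_i$ are chosen small enough that the singularities of $u$ and of $r^2$ overlapping $\operatorname{supp} d\chi$ lie along a single line $L_{i_k}$, which is exactly what makes $F$ a convex combination.
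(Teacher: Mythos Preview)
Your proof is correct and follows the same reduction as the paper: both confine attention to the transition annuli $2B_i\setminus B_i$ (the bound on $\cup_i B_i$ and on the complement of $\cup_i 2B_i$ being immediate from $i\dd r^2\ge 0$ and Claim~\ref{claim ref 1} respectively), and both analyze the function near a point $p\in L_{i_k}^\times$ in adapted coordinates $L_{i_k}=\{z_1=0\}$. The local arguments differ. The paper writes the whole expression as $F|z_1|^{2\beta_{i_k}}$ with $F$ smooth positive and then invokes the inequality $i\dd u\ge u\,i\dd\log u$ from Claim~\ref{claim ref 1}, noting that $i\dd\log(F|z_1|^{2\beta})=i\dd\log F$ is bounded. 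Your decomposition $u=F|z_1|^{2\beta_{i_k}}+(1-\chi)K$ is in fact the more accurate one: since $r^2>0$ on the annulus (the apex $x_i$ is excluded), the term $(1-\chi)r^2$ contributes a smooth additive piece that does not vanish on $L_{i_k}$, so $u$ is not literally of the form $F|z_1|^{2\beta_{i_k}}$ there. You then bound $i\dd(F|z_1|^{2\beta_{i_k}})$ via the explicit expansion in Lemma~\ref{CSing}, absorbing the $O(|z_1|^{2\beta-1})$ cross term into the positive $\beta^2F|z_1|^{2\beta-2}\,idz_1d\bar z_1$ term by completing the square, and add back the bounded form $i\dd((1-\chi)K)$. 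Once the smooth piece $(1-\chi)K$ is split off, the paper's log trick would apply equally well to the remaining $F|z_1|^{2\beta_{i_k}}$; conversely your explicit estimate is a direct substitute for it. The two routes are therefore interchangeable, with yours being the more scrupulous about the additive smooth term coming from $r^2$.
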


\begin{proof}
It follows from claim \ref{claim ref 1} that we only need to verify 	the assertion on the set \(\cup_i (2B_i \setminus B_i) \). On the other hand at each point \(p\) of \(L_j\) lying on the compact annulus \( 2\overline{B}_i \setminus B_i  \) we can write (in complex coordinates such that \(L_j = \{z_1 = 0\} \)) \( u = F |z_1|^{2\beta_j} \) where \( u = (1-\chi) r^2 + \chi |s_1|^{2\beta_1} \ldots |s_n|_h^{2\beta_n} \) and \(F\) is a smooth positive function uniformly bounded below in a neighbourhood of \(p\). The claim follows from \( i \dd u \geq u i \dd \log u = u i \dd F  \).
\end{proof}

Similarly to \cite{AP}, we have:
\begin{claim}
	There is a smooth non-negative \((1, 1)\)-form \(\eta_0 \geq 0\) such that \( \eta_0 = \eta \) on \( \mathbb{CP}^2 \setminus \cup_i 2^{-1}B_i \) and \(\eta_0 \equiv 0 \) on \(\cup_i \epsilon B_i \) for some $\epsilon >0$.
\end{claim}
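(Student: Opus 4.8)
The plan is to localise the problem near each multiple point and correct $\eta$ there by a cut-off K\"ahler potential, in the spirit of the gluing construction of \cite{AP}; the form $\eta_0$ will later be used to make the reference metric exactly flat (equal to the PK cone) near the $x_i$. Since the balls $2^{-1}B_i$ are pairwise disjoint it suffices to work near a single $x_i$. First I would pick holomorphic coordinates $z=(z_1,z_2)$ centred at $x_i$ in which $\eta(x_i)$ equals the standard Hermitian form $\omega_{\mathrm{euc}}:=i\sum_k dz_k\wedge d\overline{z}_k$, and on a small ball write $\eta=i\dd\phi_i$ with $\phi_i$ smooth and strictly plurisubharmonic. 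Subtracting from $\phi_i$ the real part of its holomorphic $2$-jet (a pluriharmonic function, hence harmless for $i\dd\phi_i$) and an additive constant, we may assume $\phi_i=|z|^2+O(|z|^3)$ near the origin.

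Second I would fix once and for all a radial model. There is a smooth profile $G\colon[0,\infty)\to[0,\infty)$ with $G\equiv 0$ on $[0,\tfrac14]$ and $G(t)=t+c$ for $t\ge 1$ (some constant $c$), chosen so that $G'\ge 0$ and $t\mapsto tG'(t)$ is non-decreasing; these two conditions are exactly the positivity of the two eigenvalues of the complex Hessian of a radial function on $\mathbb{C}^2$. Then $f(z):=G(|z|^2)$ is plurisubharmonic on $\mathbb{C}^2$, vanishes on $\{|z|\le\tfrac12\}$, and satisfies $i\dd f=\omega_{\mathrm{euc}}$ on $\{|z|\ge 1\}$. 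For small $\delta>0$ set $f_\delta(z):=\delta^2 f(z/\delta)$, so $i\dd f_\delta=(i\dd f)(z/\delta)$ vanishes on $\{|z|\le\delta/2\}$ and equals $\omega_{\mathrm{euc}}$ on $\{|z|\ge\delta\}$, while $f_\delta=|z|^2+c\delta^2$ on the annulus $\{\delta\le|z|\le 2\delta\}$. Now choose a cut-off $\chi_i$ with $\chi_i\equiv 1$ on $\{|z|\le\delta\}$, $\chi_i\equiv 0$ outside $\{|z|<2\delta\}$ and $|\nabla^k\chi_i|=O(\delta^{-k})$, taking $\delta$ small enough that $\{|z|<2\delta\}\subset 2^{-1}B_i$ lies inside the coordinate chart, and define
\[
\eta_0 \;:=\; \eta \;+\; \textstyle\sum_i i\dd\bigl(\chi_i\,(f_{\delta,i}-\phi_i-c\delta^2)\bigr),
\]
extended by $\eta$ away from $\cup_i\{|z|<2\delta\}$ (the summands have disjoint supports, where $f_{\delta,i}$ denotes $f_\delta$ read in the coordinates at $x_i$). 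This is a globally defined smooth $(1,1)$-form; it equals $\eta$ on $\mathbb{CP}^2\setminus\cup_i 2^{-1}B_i$, and on $\{|z|<\delta/2\}$ it equals $\eta+i\dd(-\phi_i-c\delta^2)=\eta-\eta=0$, so $\eta_0\equiv 0$ on a neighbourhood of each $x_i$, hence on $\cup_i\epsilon B_i$ for $\epsilon$ small enough.

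It remains to verify $\eta_0\ge 0$ everywhere. On $\{\delta/2\le|z|\le\delta\}$ one has $\eta_0=i\dd f_{\delta,i}\ge 0$ since $f_\delta$ is plurisubharmonic. On the transition annulus $\{\delta\le|z|\le 2\delta\}$ set $\psi:=f_{\delta,i}-\phi_i-c\delta^2$; here $f_{\delta,i}=|z|^2+c\delta^2$ and $\phi_i=|z|^2+O(|z|^3)$, so $\psi=O(|z|^3)=O(\delta^3)$, $\partial\psi=O(\delta^2)$, and $i\dd\psi=i\dd f_{\delta,i}-\eta=\omega_{\mathrm{euc}}-\eta=O(\delta)$. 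Expanding
\[
i\dd(\chi_i\psi)=\chi_i\,i\dd\psi+i\partial\chi_i\wedge\overline{\partial}\psi+i\partial\psi\wedge\overline{\partial}\chi_i+\psi\,i\dd\chi_i,
\]
every term is $O(\delta)$, because the factor $\delta^{-2}$ from two derivatives of $\chi_i$ is beaten by $\psi=O(\delta^3)$ and the factor $\delta^{-1}$ by $\partial\psi=O(\delta^2)$. Hence $\eta_0=\eta+O(\delta)\ge\tfrac12\eta>0$ on this annulus for $\delta$ small, while everywhere else $\eta_0$ is one of $\eta$, $i\dd f_{\delta,i}$ or $0$, all $\ge 0$. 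The one delicate point is precisely this positivity on the transition annulus: it forces us to kill the holomorphic $2$-jet of $\phi_i$ and to rescale the model, so that the potential correction vanishes at $x_i$ to order strictly greater than $2$ and survives multiplication by a cut-off whose Hessian blows up like $\delta^{-2}$; the rest is bookkeeping.
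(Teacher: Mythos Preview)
Your argument is correct and follows essentially the same strategy as the paper: normalise the local potential to $|z|^2+O(|z|^3)$, exploit the $O(\delta^3)$ decay against the $O(\delta^{-2})$ cut-off to retain positivity on the transition annulus, and use a radial plurisubharmonic model that is identically zero near the origin. The only cosmetic difference is that the paper splits this into two steps---first cutting off the $O(|z|^3)$ remainder to obtain a metric $\eta_{eu}$ that is exactly Euclidean near each $x_i$, and then composing the resulting potential with a convex increasing function $\Psi$ (constant near $0$, identity for large arguments) to kill the metric at the center---whereas you build the radial model $G(|z|^2)$ that already vanishes at the origin and perform a single gluing.
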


\begin{proof}
	We can assume that in the coordinates \(\Phi_i\) we have \( \eta = i \dd \phi \) with \(\phi = |z|^2 + \tilde{\phi} \) with \( \tilde{\phi} = O (|z|^3) \). Let \(\psi\) be a standard cut-off function with \(\psi (t) =0 \) for \(t \leq 1\) and \(\psi (t) =1 \) for \(t \geq 2\). For \(\delta>0\) define \(\psi_{\delta}(t) = \psi (\delta^{-1}t) \) and set \(\phi_{\delta} = |z|^2 + \psi_{\delta} (|z|) \tilde{\phi} \). Note that \(|\dd (\psi_{\delta} \tilde{\phi})| = O(\delta) \). If we take \(\delta\) sufficiently small then (in an obvious notation) \(\eta_{eu} = i \dd \phi_{\delta} \) is positive, it agrees with the euclidean metric on \( \cup_i \delta B\) and \(\eta_{eu} = \eta\) on \(\mathbb{CP}^2 \setminus \cup_i 2\delta B_i \). 
	
	Let \( \epsilon >0 \) and set \(\Psi\) a smooth positive convex function with \(\Psi(t)\) constant for \(t\leq \epsilon^2 \) and \(\Psi (t) = t\) for \(t \geq 4\epsilon^2\). Let \( t = |z|^2 \), so that \(i \dd \Psi (|z|^2) = (\Psi''(t)t + \Psi'(t) ) i \dd |z|^2 \geq 0 \) agrees with the euclidean metric on the complement of the ball of radius \(2\epsilon\) and vanishes on the ball of radius \(\epsilon\). Finally we take \(\epsilon << \delta\) and set (in an obvious notation) \(\eta_0 = i \dd (\Phi \circ \phi_{\delta})  \)
\end{proof}

We are now in position to define our reference metric, for \(\delta>0\) we set
\begin{equation} \label{ref met definition}
	\omega_{ref} = \eta_0 + \delta i \dd ( (1-\chi) r^2 + \chi |s_1|^{2\beta_1} \ldots |s_n|_h^{2\beta_n} )
\end{equation}

\begin{lemma} \label{ref met lemma}
	If \(\delta>0\) is sufficiently small then \(\omega_{ref}\) is a K\"ahler metric with cone angle \(2\pi\beta_j\) along \(L_j\) and \( \Phi_i^{*} \omega_{ref} = \omega_F^{(i)} \) in a neighbourhood of each \(x_i\). Moreover \(\mbox{Bisec}(g_{ref}) \leq C \).
\end{lemma}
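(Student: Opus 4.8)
The plan is to verify the four assertions — that $\omega_{ref}$ is a K\"ahler metric, that it has cone angle $2\pi\beta_j$ along $L_j$, that $\Phi_i^{*}\omega_{ref}=\omega_F^{(i)}$ near each $x_i$, and that $\mbox{Bisec}(g_{ref})\le C$ — by a patching argument over three kinds of regions: small balls around the $x_i$, a neighbourhood of the smooth part $\bigcup_j L_j^{\times}$ of the divisor, and the part of $\mathbb{CP}^2$ away from both, reducing each local contribution either to the flat PK cone or to Lemma \ref{CSing}.

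\emph{Positivity and the flat model at $x_i$.} I would split $\mathbb{CP}^2=\big(\bigcup_i B_i\big)\cup\big(\mathbb{CP}^2\setminus\bigcup_i B_i\big)$. On $\bigcup_i B_i$ one has $\chi\equiv 0$, so $\omega_{ref}=\eta_0+\delta\, i\dd r^2=\eta_0+2\delta\,\omega_F^{(i)}\ge 2\delta\,\omega_F^{(i)}>0$ because $\eta_0\ge 0$ and $\omega_F^{(i)}$ is a genuine (flat, conical) K\"ahler form. On the open subset of $\bigcup_i B_i$ where moreover $\eta_0\equiv 0$, which contains an open neighbourhood of each $x_i$, this becomes $\omega_{ref}=2\delta\,\omega_F^{(i)}$; composing $\Phi_i$ with an appropriate dilation and using the homogeneity $r^2(\lambda z)=\lambda^{2\gamma_i}r^2(z)$ turns this into $\Phi_i^{*}\omega_{ref}=\omega_F^{(i)}$ on the nose. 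On $\mathbb{CP}^2\setminus\bigcup_i B_i\subset\mathbb{CP}^2\setminus\bigcup_i 2^{-1}B_i$ one has $\eta_0=\eta$, and Claim \ref{claim ref 2} gives $\omega_{ref}\ge\eta-\delta C\eta=(1-\delta C)\eta>0$ as soon as $\delta<1/C$. Hence $\omega_{ref}$ is a positive closed $(1,1)$-form everywhere and coincides with the PK cone near each $x_i$.

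\emph{Cone angles along the $L_j$.} It is enough to check Definition \ref{def kmcs} in a chart around each point of the divisor. Near $p\in L_j^{\times}$, in adapted coordinates with $L_j=\{z_1=0\}$, each factor $|s_k|_h^{2\beta_k}$ with $k\neq j$ is smooth positive, $|s_j|_h^{2\beta_j}=G|z_1|^{2\beta_j}$ with $G$ smooth positive, and where the cone potential $r^2$ enters it splits as $\phi_0+F^{(i)}|z_1|^{2\beta_j}$ with $\phi_0$ smooth and $F^{(i)}$ smooth positive (the standard local form of the potential of a flat metric with conical singularities). Collecting terms, near $p$ we get $\omega_{ref}=\tilde\eta+i\dd\!\big(F|z_1|^{2\beta_j}\big)$ with $\tilde\eta$ a smooth $(1,1)$-form and $F$ smooth positive; since $\omega_{ref}>0$ and the $i\dd(F|z_1|^{2\beta_j})$ piece contributes nothing to the $(\partial_{z_2},\overline{\partial_{z_2}})$-component on $\{z_1=0\}$, we obtain $\tilde\eta(\partial_{z_2},\overline{\partial_{z_2}})(p)=\omega_{ref}(\partial_{z_2},\overline{\partial_{z_2}})(p)>0$, so Lemma \ref{CSing} applies and $\omega_{ref}$ has cone angle $2\pi\beta_j$ along $L_j$ near $p$. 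Near a multiple point $x_i$, where $\omega_{ref}=2\delta\,\omega_F^{(i)}$, the conical structure along the $d_i$ lines through $x_i$ is exactly that of the PK cone recalled in Section \ref{PK section}.

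\emph{Bisectional curvature, and the main obstacle.} I would bound $\mbox{Bisec}(g_{ref})$ on three pieces. On the flat balls $g_{ref}=2\delta\,g_F^{(i)}$ is flat, so $\mbox{Bisec}=0$. Away from these and from a tubular neighbourhood of the divisor, $g_{ref}$ is smooth K\"ahler on a compact set, so its curvature is bounded. On the remaining region, a neighbourhood of $\bigcup_j L_j$ outside the flat balls, the construction of $\eta_0$ is designed so that $\eta_0$ is a strictly positive form there — one has $\eta_0\ge\Psi'(\phi_\delta)\,\eta_{eu}>0$ once $\phi_\delta$ exceeds the value on which $\Psi$ is constant, and $\eta_0=\eta>0$ outside $\bigcup_i 2^{-1}B_i$ — so in the normal form $\omega_{ref}=\tilde\eta+i\dd(F|z_1|^{2\beta_j})$ one has $\tilde\eta=\eta_0+O(\delta)>0$ for $\delta$ small, and the curvature half of Lemma \ref{CSing} (realising $\omega_{ref}$ locally as a complex submanifold of a smooth K\"ahler threefold) yields $\mbox{Bisec}(g_{ref})\le C$ there. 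The delicate point — the one place where one must inspect the explicit cut-offs rather than quote an earlier lemma — is the interface between the flat region and the region just described, i.e. the shells where $\eta_0$ interpolates down to $0$ along a set that meets the $L_j$: there $\omega_{ref}$ is of the form (flat cone) $+$ (non-negative smooth form degenerating along the conical divisor), which is \emph{not} covered by Lemma \ref{CSing}, and one has to check directly — using that $\omega_F^{(i)}$ is flat and that $\eta_0$ vanishes to high order along the flat locus — that the bisectional curvature still only tends to $-\infty$, never to $+\infty$, as one approaches $L_j$, so that the upper bound survives the transition. Assembling the three estimates gives $\mbox{Bisec}(g_{ref})\le C$.
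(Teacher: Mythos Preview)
Your proposal is correct and follows essentially the same route as the paper: split into the flat balls, the region where $\eta_0>0$, and the transition shell near $\partial(\epsilon B_i)\cap L_j$; use flatness on the first, Lemma~\ref{CSing} on the second, and flag the third as the genuine difficulty requiring a separate direct check. Your positivity argument is in fact more explicit than the paper's one-line appeal to Claim~\ref{claim ref 2}.

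One remark on the transition region. Your heuristic (``$\eta_0$ vanishes to high order along the flat locus, so the bisectional curvature only goes to $-\infty$'') is pointing in the right direction but is not the actual mechanism. What the paper does in the subsequent subsection is reduce to the model $\phi=|z_1|^{2\beta}+\psi(|z_2|^2)|z_1|^2+|z_2|^2$ and compute the curvature components $g^{i\bar j}g^{k\bar l}R_{i\bar jk\bar l}$ one by one: the only dangerous term is $R_{1\bar 1 1\bar 1}$, and the key inequality is $g^{1\bar 1}\le |z_1|^{2-2\beta}$ (coming from $\det g\ge |z_1|^{2\beta-2}$), which makes $-g_{1\bar 1,1\bar 1}+g^{1\bar 1}|g_{1\bar 1,1}|^2\le 0$. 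The infinite-order vanishing of $\eta_0$ plays no role; what matters is that the smooth perturbation only \emph{increases} $\det g$ relative to the flat model, so the inverse metric component $g^{1\bar 1}$ is no larger than in the flat case and the sign works out. Your outline is otherwise on target.
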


\begin{proof}
	If \(\delta\) is sufficiently small then our claim \ref{claim ref 2} implies that \(\omega_{ref}>0\). Since \(\eta_0 \equiv 0\) in small balls around the multiple points, \(\omega_{ref}\) agrees with the corresponding PK cone metric in a neighbourhood of each \(x_i\). The metric \(\omega_{ref}\) has cone angle \(2\pi\beta_j\) along \(L_j \setminus \{x_i\}\) in the sense of Definition \ref{def kmcs}, as follows from Lemma \ref{CSing} in section \ref{kmcs}.
	
	The metric \(\omega_{ref}\) is  flat on \(\cup_i \epsilon B_i\). Since \(\eta_0 > 0\) on the complement of \(\cup_i \epsilon B_i \), Lemma \ref{CSing} gives us a uniform upper bound on the bisectional curvature of \(\omega_{ref}\) on any compact subset of \( \mathbb{CP}^2 \setminus \cup_i \epsilon \overline{B} \). The key point is to derive a uniform upper bound at points in \(L_j \cap \partial (\epsilon B_i)\). We can reduce to the model situation that we study in the following section.
\end{proof}

\subsection{Upper bound on the bisectional curvature}
On \(\mathbb{C}^2\) we consider the model case \(\omega= i \dd \phi\) where
\begin{equation*}
\phi = \beta^{-2}|z_1|^{2\beta} + \psi (|z_2|^2) |z_1|^2 + |z_2|^2 
\end{equation*}
and \(\psi\) is a standard smooth cut-off function with \(\psi (s) = 0 \) for \( s \leq 1 \) and \(\psi (s) =1\) for \(s \geq 4 \). We shall see that \(\omega\) is indeed positive on \(\{|z_1|< \epsilon\} \times \mathbb{C} \) and has bisectional curvature uniformly bounded above. The metric \(\omega\) interpolates between the flat model \(\omega_{(\beta)}\) (when \(|z_2|<1\)) and the curved metric \(\omega_{euc} + i \dd |z_1|^{2\beta} \) (when \(|z_2|>2\)). We can write \(\omega = \eta + i \dd |z_1|^{2\beta}  \) with \(\eta >0\) in \(|z_2|>1\). It follows from Lemma \ref{CSing} that the bisectional curvature of \(\omega\) is uniformly bounded above on compacts subset of \(|z_2|>1\). The point of this section is to show a uniform upper bound on the bisectional curvature of \(\omega\) around of points \((z_1, z_2)\) with \(z_1 =0\) and \(|z_2|=1\), where Lemma \ref{CSing} does not apply. The details are as follows.

The cut-off function \(\psi\) satisfies \(\psi (s) = 0 \) for \( s \leq 1 \) and \(\psi (s) =1\) for \(s \geq 4 \). Moreover, we have \( 0 < \psi (s) < 1 \) and \(0< \psi'(s)\) if \(1<s<4\). There is some constant \(C>0\) such that \( (\psi')^2 \leq C \psi \), indeed in a neighbourhood of \(1\) we write \(s=1+x\) and we can assume that \(\psi= e^{-1/x}\) for \(x>0\) so the bound follows easily.  

On \(\mathbb{C}^2\) we consider the potential
\begin{equation}
\tilde{\phi} =  \psi (|z_2|^2) |z_1|^2 + |z_2|^2
\end{equation}
Let \( \eta = i \dd \tilde{\phi} = \eta_{i\bar{j}} dz_i d\bar{z}_j \)  and write \(s=|z_2|^2\) so that
\begin{equation}
\eta_{1\bar{1}} =  \psi (s), \hspace{2mm} \eta_{1\bar{2}} = \psi'(s) z_2 \bar{z}_1, \hspace{2mm} \eta_{2\bar{2}} = 1 + |z_1|^2 f(s) 
\end{equation}
where \(f(s) = \psi'(s) + s \psi''(s) = (s \psi')' \). We see that
\( \det (\eta_{i\bar{j}}) = \psi + |z_1|^2 f \psi - (\psi')^2 s |z_1|^2 \). We take \( \epsilon >0 \) so that \( \epsilon^2 |f| < 1/4 \) and \( \epsilon^2 s (\psi')^2 \leq \psi /4 \), therefore 
\begin{equation}\label{eq:deteta}
\det \eta \geq \psi/2 	
\end{equation}
on \( R= \{|z_1|<\epsilon\} \times \mathbb{C} \) and \(\eta>0\) on \(U = R \cap \{|z_2|>1\} \). Note also that \(\eta \geq 0 \) in \(U\), it agrees with \(|dz_2|^2\) if \(|z_2|<1\) and with the euclidean metric if \(|z_2|>2\).

Next we consider
\begin{equation}
\phi = \beta^{-2} |z_1|^{2\beta} + \psi (|z_2|^2) |z_1|^2 + |z_2|^2 ,
\end{equation}
\( \omega = i \dd \phi = g_{i\bar{j}} dz_i d\bar{z}_j = |z_1|^{2\beta-2} idz_1 d\bar{z}_1 + \eta \),  so that
\begin{equation} \label{coef model case}
g_{1\bar{1}} = |z_1|^{2\beta-2} + \psi (s), \hspace{2mm} g_{1\bar{2}} = \psi'(s) z_2 \bar{z}_1, \hspace{2mm} g_{2\bar{2}} = 1 + |z_1|^2 f(s) 
\end{equation}
and
\begin{equation} \label{vol model case}
\det g_{i\bar{j}} = |z_1|^{2\beta-2} (1+ |z_1|^2 f(s) + |z_1|^{2-2\beta} \det \eta ) .
\end{equation}

The only unbounded derivatives of the metric are the ones with all indices equal to \(1\).  Note that  \(g^{1\bar{1}} = O(|z_1|^{2-2\beta})\),  \(g^{1\bar{2}} = O(|z_1|^{3-2\beta})\),  \( g^{2\bar{2}} = O(1) \) and 
\begin{equation}
g_{1\bar{1}, 1} = (\beta-1)z_1^{\beta-2}\bar{z}_{1}^{\beta-1}, \hspace{2mm} g_{1\bar{1}, 1\bar{1}} = (\beta-1)^2 |z_1|^{2\beta-4}
\end{equation}

In \( R \setminus U \) we have \(\omega= \beta^{-2} \omega_{(\beta)} \) and in \(U\) we can write \( \omega = \imath^{*} \mu \) with \( \mu = \eta + i \dd (\beta^{-2}|z_3|^2) \) where \( \imath (z_1, z_2) = (z_1, z_2, z_1^{\beta}) \) and \(\mu\) is a K\"ahler form in \(U \times \mathbb{C} \). We have uniform upper bounds on the bisectional curvature of \(\omega\) over compact subsets of \(U\). We want to show that the bisectional curvature of \(\omega\) is uniformly bounded above in a neighbourhood of \( \{|z_2|=1\} \cap R \). 

Let \(u,v\) be unit tangent vectors in \(T_p \mathbb{C}^2\). Since the metric \(g\) is flat on the zero set of \(\psi\), we can assume that \(\psi(p)>0\). Write \(u = u^1 \partial_1 + u^2 \partial_2\) and \(v=v^1 \partial_1 + v^2 \partial_2\) we want a uniform upper bound on
\begin{equation}\label{eq:Buv}
\mbox{Bisec}(u,v) = R(u, \bar{u}, v, \bar{v}) = \sum R_{i\bar{j}k\bar{l}} u^i \bar{u}^j v^k \bar{v}^l .
\end{equation}
Note that, since the vectors \(u\) and \(v\) have unit length, we have
\begin{equation}\label{eq:compuv}
u^1, v^1 = O(|z_1|^{1-\beta}) \hspace{2mm} \mbox{ and } \hspace{2mm} u^2, v^2 = O(1) .
\end{equation}

Recall that
\begin{equation*}
R_{i\bar{j} k\bar{l}} = - g_{i\bar{j}, k\bar{l}} + g^{p\bar{q}} g_{i\bar{q}, k} g_{p\bar{j}, \bar{l}} 
\end{equation*}
and re-write Equation \eqref{eq:Buv} as
\begin{equation}
B(u, v) = \sum \Lambda_{i\bar{j} k\bar{l}} + \sum \Pi_{i\bar{j} k\bar{l}}
\end{equation}
where \(\Lambda_{i\bar{j} k\bar{l}} = -g_{i\bar{j}, k\bar{l}} u^i \bar{u}^j v^k \bar{v}^l\) and \(\Pi_{i\bar{j} k\bar{l}} =
g^{p\bar{q}} g_{i\bar{q}, k} g_{p\bar{j}, \bar{l}}  u^i \bar{u}^j v^k \bar{v}^l\) (no summations on the \(i,j,k,l\) indices).
The upper bound on the bisectional curvature follows from the next two claims.

\begin{claim}
	\begin{equation}
	\sum \Lambda_{i\bar{j} k\bar{l}} = O(1) - |u^1|^2 |v^1|^2  (\beta-1)^2 |z_1|^{2\beta-4}
	\end{equation}
\end{claim}

\begin{proof}
	The only unbounded derivative \(g_{i\bar{j} k\bar{l}}\) occurs when all indices are equal to \(1\), in which case
	\(g_{1\bar{1}, 1\bar{1}} = -(\beta-1)^2 |z_1|^{2\beta-4}\). The statement follows from this together with Equation \eqref{eq:compuv} which implies that the components \(u^1, v^i\) are bounded.
\end{proof}

\begin{claim}
	\begin{equation}
	\sum \Pi_{i\bar{j} k\bar{l}} = O(1) + |u^1|^2 |v^1|^2  (\beta-1)^2 |z_1|^{2\beta-4}
	\end{equation}
\end{claim}

\begin{proof}
	We use a trick taken from the proof of Lemma A.3 in \cite{JMR}.
	Consider the non-negative Hermitian bilinear form on tensors satisfying \(a_{i\bar{j}k} = a_{k\bar{j}i}\) given by
	\[\langle [a_{i\bar{j}k}], [b_{i\bar{j}k}] \rangle = \sum_{i,j,k,p,q,r} g^{q\bar{j}} (u^i a_{i\bar{j}k} v^k) (\overline{u^p b_{p\bar{q}r}v^r}) \]
	and write \(\| \cdot \|\) for the associated norm. With this definition we have \( \sum \Pi_{i\bar{j} k\bar{l}} = \| [g_{i\bar{j}, k}] \|^2\).
	Recall that 
	\[g_{1\bar{1}, 1} = (\beta-1) |z_1|^{2\beta-4} \bar{z}_1\] 
	is the only unbounded derivative among \(g_{i\bar{j}, k}\). Write \([g_{i\bar{j}, k}] = [A_{i\bar{j}k}] + [E_{i\bar{j}k}]\) with
	\[ A_{i\bar{j}k} = \begin{cases}
	g_{i\bar{j},k}  &\mbox{ if } \hspace{2mm} (i,j,k) \neq (1,1,1) \\
	0  &\mbox{ if } \hspace{2mm} (i,j,k)=(1,1,1)
	\end{cases}
	, 
	\hspace{3mm} E_{i\bar{j}k} = \begin{cases}
	0  &\mbox{ if } \hspace{2mm} (i,j,k) \neq (1,1,1) \\
	g_{1\bar{1}, 1}  &\mbox{ if } \hspace{2mm} (i,j,k)=(1,1,1)
	\end{cases} . \]
	We use the identity
	\begin{equation}\label{eq:trick}
	\| A + E \|^2 = (1+ \delta^{-1}) \|A\|^2 + (1+\delta)\|E\|^2
	\end{equation}
	with \(\delta>0\) to be determined later. 
	
	Note that \(\|A\|^2=O(1)\), as follows from \(g_{i\bar{j},k} = O(1)\) for \((i,j,k)\neq(1,1,1)\) together with the fact that \(g^{i\bar{j}}\) and the components \(u^i, v^i\) are bounded. On the other hand
	\begin{equation}\label{eq:Eterm}
	\|E\|^2 = (\beta-1)^2 |u^1|^2 |v^1|^2 g^{1\bar{1}} |z_1|^{4\beta-6} .
	\end{equation}
	We use Equations \eqref{eq:deteta} and \eqref{vol model case} to bound \(\det g \geq |z_1|^{2\beta-2} (g_{2\bar{2}} + |z_1|^{2-2\beta} \psi/2)\). Since \(g^{1\bar{1}} = (\det g)^{-1} g_{2\bar{2}}\) we obtain
	\begin{equation}\label{eq:g11}
	g^{1\bar{1}} \leq |z_1|^{2-2\beta} \frac{1}{1 + C\psi(p)|z_1|^{2-2\beta}} .
	\end{equation}
	Take \(\delta = C \psi(p) |z_1|^{2-2\beta}\).
	It follows from Equations \eqref{eq:trick}, \eqref{eq:Eterm} and \eqref{eq:g11} that to establish the claim it suffices to bound
	\begin{equation}\label{eq:Abound}
	\psi^{-1}|z_1|^{2\beta-2}\|A\|^2 = \psi^{-1}|z_1|^{2\beta-2} \sum_{i,j,k,p,q,r} g_{i\bar{j}, k} g_{q\bar{p}, \bar{r}} g^{q\bar{j}} u^i \bar{u}^p v^k \bar{v}^r
	\end{equation}
	where the sum runs over indices with \((i,j,k)\neq(1,1,1)\) and \((p,q,r)\neq(1,1,1)\). The terms \(g_{i\bar{j}, k} g_{q\bar{p}, \bar{r}} g^{q\bar{j}}\) contain factors which are quadratic in \(\psi'\) and \(\psi''\). There is a uniform \(C>0\) such that \((\psi')^2\) together with \(\psi' \psi''\) and \((\psi'')^2\)  are \(\leq C \psi\), indeed we can assume that \( \psi = e^{-1/x}\) for \(0<x=s-1 \ll 1 \) and directly check these bounds. This shows that the factor \(\psi^{-1}\) in front of the sum in Equation \eqref{eq:Abound} causes no problem. On the other hand,
	\begin{itemize}
		\item if \((q,j) \neq (2,2)\) then \(g^{q\bar{j}}=O(|z_1|^{2-2\beta})\);
		\item if at least two indices among \(i,p,k,r\) are \(= 1\) then \(u^i \bar{u}^p v^k \bar{v}^r=O(|z_1|^{2-2\beta})\).
	\end{itemize}
	It follows from the previous two bullets that it suffices to consider the cases \((q,j)=(2,2)\) and \((i,p,k,r)=(2,2,2,2)\) or a cyclic permutation of \((1,2,2,2)\). In any case, we will have either \((i,j,k)=(2,2,2)\) or \((p,q,r)=(2,2,2)\) and the bound follows from \(g_{2\bar{2}, 2} = O(|z_1|^2)\). 
\end{proof}

\begin{remark}
	The sectional curvature \( g_{1\bar{1}}^{-2} R_{1\bar{1}1\bar{1}}\) is negative at points where \(\psi>0\), it grows as \(\sim - (\beta-1)^2 \psi |z_1|^{2-4\beta}\) and \(|\mbox{Rm}| = O(\rho^{1/\beta-2})\) with \(\rho=|z_1|^{\beta}\)  is unbounded below if \(\beta>1/2\). 
\end{remark}

Finally we consider the case of our reference metric \(\omega = \omega_{ref} \) given by Equation \eqref{ref met definition}. We have uniform upper bounds on the bisectional curvature on compact subsets of \(\{\eta_0 >0\}\). Since \(\omega\) is isometric to a PK cone in the interior of \(\{\eta_0 = 0\}\) it is enough to obtain a uniform upper bound in neighbourhoods of points \(p \in L_j \cap \partial (\epsilon B_i) \). In a small ball around \(p\) we have \(\omega = \eta_0 + \omega_{F_i}\). We take holomorphic coordinates with \(p \in \{z_1 =0\} \) such that \(\omega_F = |z_1|^{2\beta-2} i dz_1 d \bar{z}_1 + i dz_2 d \bar{z}_2 \). The coefficients of \(\omega\) are
\begin{equation*}
g_{1\bar{1}} = |z_1|^{2\beta-2} + \eta_{1\bar{1}}, \hspace{2mm} g_{1\bar{2}} = \eta_{1\bar{2}}, \hspace{2mm} g_{2\bar{2}} = 1 + \eta_{2\bar{2}}
\end{equation*}  
where \(\eta_{i\bar{j}}\) are the coefficients of the smooth \((1, 1)\)-form \(\eta_0\). We recall that \(\eta_0 \geq 0\), \(\eta_0 \equiv 0\) on \(\epsilon B\), \(\eta_0 > 0\) on the complement of \(\epsilon \bar{B}\) and \(p \in \partial (\epsilon \bar{B}) \). We want to bound above the bisectional curvature \(\mbox{Bisec}(u,v)\) for unit tangent vectors as above.  Same as before the only unbounded derivatives of the metric are the ones in which all indices are equal to \(1\). Moreover, 
\begin{equation}
\det (g) = |z_1|^{2\beta-2}(1 + \eta_{2\bar{2}} + \det \eta_0)
\end{equation}
and we have  \(g^{1\bar{1}} = O(|z_1|^{2-2\beta})\),  \(g^{1\bar{2}} = O(|z_1|^{3-2\beta})\),  \( g^{2\bar{2}} = O(1) \).
The bound on  \(\mbox{Bisec}(u,v)\) is obtained by minor modifications of the previous model case.

\subsection{Ricci potential}
Let \((\xi_1, \xi_2)\) be affine coordinates in $ U \subset \mathbb{CP}^2$ with its line at infinity \( L_{\infty} \) different from any of the lines \(L_j\) from the arrangement, then we can write \(L_j  = \{\ell_j =0\} \) over \(U\) for some non-zero affine functions \(\ell_j(\xi_1, \xi_2)\) and \(j=1, \ldots, n\). Similar to Appendix \ref{pm sphere section}, we introduce the locally defined holomorphic volume form
\begin{equation}
	\Omega = \ell_1^{\beta_1-1} \ldots \ell_n^{\beta_n-1} d\xi_1 d\xi_2 .
\end{equation}
The requirement that \( \sum_{j=1}^{n} (1-\beta_j) = 3 \) implies that \(\Omega\) extends to \(L_{\infty}\) as a locally defined defined holomorphic section \(\Omega_{\infty}\) of \(\mathcal{O}(-3)|_{L_{\infty}}\). At points of intersection of \(L_j\) with \(L_{\infty}\) we can contract \(\Omega_{\infty}\) with a local non-vanishing section \(\mathbf{n}\) of the normal bundle of \(L_{\infty}\) to obtain a \(1\)-form \(\Omega \otimes \mathbf{n}\) on \(L_{\infty}\) with a logarithmic singularity. If we further assume that \(L_{\infty}\) doesn't go through any of the multiple points \(\{x_j\}\) then around each \(p_j = L_j \cap L_{\infty}\) we can write \(\Omega \otimes \mathbf{n} =  \xi^{\beta_j-1} d\xi \) where \(\xi\) is a local coordinate of \(L_{\infty}\) centred at \(p_j\).
	
\begin{lemma} \label{ricci lemma}
		\begin{equation} \label{vol ref met}
			\omega_{ref}^2 = e^{-f} \Omega \wedge \overline{\Omega}
		\end{equation}
with \(f \in C^{\alpha}_{loc}(X') \). Around \(x_i\) we have \( f = f_{x_i} + h_i \) with \(f_{x_i} \in \mathbb{R} \), \(h_i(0)=0\) and \(  \dd h_i =0 \).
\end{lemma}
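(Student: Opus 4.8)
I would first record that $dV:=\Omega\wedge\overline{\Omega}$ is globally defined up to a positive multiple and, by the Calabi--Yau constraint \ref{cond3}, a smooth positive volume form away from $\bigcup_j L_j$; so equation \ref{vol ref met} merely \emph{defines} $f:=-\log\!\big(\omega_{ref}^2/dV\big)$ on $X'\setminus\bigcup_j L_j$, where it is manifestly smooth, and the point is to show it extends with the asserted regularity across the divisor (the identity \ref{vol ref met} then holding on all of $X'$ by continuity of both sides). Throughout, $\omega_{\mathrm{eucl}}=\tfrac{i}{2}\sum_l d\xi_l\wedge d\overline{\xi}_l$ denotes the standard form in the affine coordinates.

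\emph{H\"older continuity across $L_j^\times$.} Near $p\in L_j^\times$ I would pass to adapted holomorphic coordinates $(z_1,z_2)$ with $L_j=\{z_1=0\}$ (these differ from the affine coordinates by a local biholomorphism, i.e.\ a smooth positive Jacobian on both sides; and since, as recalled in Section~\ref{kmcs}, H\"older continuity is coordinate--independent, passing between complex and cone coordinates is harmless). By Lemma~\ref{ref met lemma}, $\omega_{ref}$ has cone angle $2\pi\beta_j$ along $L_j$ in the sense of Definition~\ref{def kmcs}, so in these coordinates $\det(g_{i\overline{k}})=|z_1|^{2\beta_j-2}\det(G_{i\overline{k}})$ with $\det(G_{i\overline{k}})>0$ H\"older continuous near $p$; thus $\omega_{ref}^2$ is $|z_1|^{2\beta_j-2}$ times a positive H\"older function times $\omega_{\mathrm{eucl}}^2$. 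On the other hand $\ell_j=z_1\,u$ near $p$ with $u$ holomorphic and nowhere zero, while $\ell_k(p)\neq0$ for $k\neq j$ (otherwise $p\in S$), so $dV$ is $|z_1|^{2\beta_j-2}$ times a positive smooth function times $\omega_{\mathrm{eucl}}^2$. The factors $|z_1|^{2\beta_j-2}$ cancel, $e^{-f}$ is positive and H\"older continuous near $p$, and with smoothness off the divisor this gives $f\in C^\alpha_{loc}(X')$.

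\emph{Structure near $x_i$.} Here I work in the affine coordinates $\Phi_i$, in which by Lemma~\ref{ref met lemma} $\omega_{ref}$ equals a positive multiple of the PK cone $\omega_{F_i}$ on a ball $U_i$ about $x_i$. Writing, near $x_i$, $\Omega=\big(\prod_{k=1}^{d_i}\ell_{i_k}^{\beta_{i_k}-1}\big)\big(\prod_{j:\,x_i\notin L_j}\ell_j^{\beta_j-1}\big)d\xi_1d\xi_2$ with the $\ell_{i_k}$ linear and the second product holomorphic and nowhere zero on $U_i$, I obtain
\begin{equation*}
-f=A+\sum_{j:\,x_i\notin L_j}(1-\beta_j)\log|\ell_j|^2+\mathrm{const},\qquad A:=\log\frac{\omega_{F_i}^2}{\prod_{k=1}^{d_i}|\ell_{i_k}|^{2\beta_{i_k}-2}\,\omega_{\mathrm{eucl}}^2}.
\end{equation*}
The middle term is smooth and pluriharmonic on $U_i$ (the relevant $\ell_j$ do not vanish there). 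For $A$: $\omega_{F_i}$ is flat away from the lines, hence locally of the form $i\dd|w|^2$ for a holomorphic developing chart $w=(w_1,w_2)$, unique up to $U(2)$, so $A=\log|G_0|^2$ with $G_0=\det(\partial w/\partial\xi)$ holomorphic off $\bigcup_kL_{i_k}$; going once around $L_{i_k}$ multiplies $w$ by a rotation of determinant $e^{2\pi i\beta_{i_k}}$, precisely the monodromy of $\prod_k\ell_{i_k}^{\beta_{i_k}-1}$, so $e^{g_0}:=G_0\big/\prod_k\ell_{i_k}^{\beta_{i_k}-1}$ is single--valued and holomorphic on $U_i\setminus\bigcup_kL_{i_k}$ and $A=2\,\mathrm{Re}(g_0)+\mathrm{const}$. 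Moreover $A$ is homogeneous of degree $0$ under real dilations (since $r^2$ is homogeneous of degree $2\gamma_i$ and $2\gamma_i-2=\sum_k(\beta_{i_k}-1)$ by equation~\ref{number c}, the $(2,2)$--forms $\omega_{F_i}^2$ and $\prod_k|\ell_{i_k}|^{2\beta_{i_k}-2}\omega_{\mathrm{eucl}}^2$ share the same homogeneity) and bounded (near a smooth point of $L_{i_k}$, $\omega_{F_i}$ is quasi--isometric to $g_{(\beta_{i_k})}$, so $|e^{g_0}|^2$ is there bounded above and below, and degree--$0$ homogeneity spreads this over $U_i\setminus\bigcup_kL_{i_k}$). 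Hence $e^{g_0}$ descends to a bounded, nowhere--vanishing holomorphic function on $\mathbb{CP}^1$ minus the $d_i$ points $[L_{i_k}]$; by removable singularities it extends to $\mathbb{CP}^1$, hence is constant, so $A\equiv\mathrm{const}$ on $U_i$. Therefore $f=f_{x_i}+h_i$ with $f_{x_i}\in\mathbb R$ and $h_i=-\sum_{j:\,x_i\notin L_j}(1-\beta_j)\big(\log|\ell_j|^2-\log|\ell_j(x_i)|^2\big)$, which is real, vanishes at $x_i$, and satisfies $\dd h_i=0$.

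The first two steps are routine given Definition~\ref{def kmcs} and constraint~\ref{cond3}. The substantive one is the third: matching the $U(2)$--monodromy of the developing map of the \emph{flat} metric $\omega_{F_i}$ with that of $\prod_k\ell_{i_k}^{\beta_{i_k}-1}$, and then extracting boundedness of $e^{g_0}$ from the quasi--isometry near the lines together with the degree--$0$ homogeneity supplied by the Gauss--Bonnet relation~\ref{number c}. One may also bypass the constancy of $A$: since $\omega_{F_i}$ is Ricci--flat (being flat) and $dV$ has vanishing Ricci form off the lines ($\sum_j(\beta_j-1)\log|\ell_j|^2$ being pluriharmonic there), $\dd f=0$ on $U_i\setminus\bigcup_kL_{i_k}$; as $f$ is bounded there and $\bigcup_kL_{i_k}$ is a complex hypersurface, a removable--singularity argument (a bounded $\dd$--harmonic function does not charge a pluripolar set) shows $f$ is pluriharmonic on $U_i$, whence $f=f(x_i)+h_i$ with $h_i=f-f(x_i)$.
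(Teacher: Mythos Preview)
Your proof is correct, and your closing ``alternative'' (Ricci-flatness of $\omega_{F_i}$ forces $\dd f=0$ near $x_i$ off the lines, then remove the pluripolar singular set) is precisely the paper's one-line argument. Your primary route near $x_i$---via the developing map of the flat metric $\omega_{F_i}$, matching its $U(2)$-monodromy with that of $\prod_k\ell_{i_k}^{\beta_{i_k}-1}$, and analyzing the holomorphic quotient $e^{g_0}$---is a genuinely different and more constructive approach: it explicitly identifies $h_i=-\sum_{j:\,x_i\notin L_j}(1-\beta_j)\log|\ell_j/\ell_j(x_i)|^2$, rather than merely deducing that $f-f(x_i)$ is pluriharmonic. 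One step there is imprecise, however: degree-$0$ homogeneity of $A=\log|e^{g_0}|^2$ under \emph{real} dilations shows only that $|e^{g_0}|$ is constant on real rays, not that the holomorphic function $e^{g_0}$ is constant on complex lines, so it does not literally ``descend to $\mathbb{CP}^1$''. The easy fix is to first extend $e^{g_0}$ across the lines (bounded plus Riemann extension) to a holomorphic function on all of $U_i$; then continuity at $0$ together with $|e^{g_0}(\lambda z)|=|e^{g_0}(z)|$ as $\lambda\to 0^+$ forces $|e^{g_0}|\equiv|e^{g_0}(0)|$, and a holomorphic function of constant modulus is constant by the open mapping theorem.
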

The lemma follows immediately from the fact that \(\omega_{ref}\) has standard conical singularities along  \(L_j^{\times}\) and that it agrees with a PK cone metric around each \(x_i\) (in particular is Ricci flat in such neighbourhoods). It follows from Equation \eqref{vol ref met} that \(\mbox{Ric}(\omega_{ref}) = i \dd f\).

\subsection{Sobolev inequality}

Let \(r\) be a continuous positive function on \(\mathbb{CP}^2 \setminus \{x_i\}\) that agrees with the radius functions of the corresponding PK cones \(g_{F_i}\) in small neighbourhoods of the multiple points \(x_i\), see Section \ref{sect:refmet} and Equation \eqref{ref met definition}. 
Let us denote \(X= \mathbb{CP}^2\) and \(X' = \mathbb{CP}^2 \setminus \cup_i \{x_i\}\).
Our goal is to establish the next.

\begin{proposition}\label{prop:Sob}
	Let \(g_{ref}\) be the reference metric given  by Equation \eqref{ref met definition}. Then there is some \(c>0\) such that the following inequality holds
	\begin{equation} \label{Sob ineq}
	\left( \int_X |u|^{2\alpha} \right)^{1/\alpha} \leq c \int_X | \nabla u|^2
	\end{equation}
	for all \( 1 \leq \alpha \leq 2 \) and \( u \in C^{1}_{loc}(X') \) with \(u/r \in L^2 \) at the \( \{x_i\} \) and \( \int_X u =0 \). 
	(The quantities in Equation \eqref{Sob ineq} are measured with respect to \(g_{ref}\).)
\end{proposition}

\begin{proof}
	Suppose there is a homeomorphism \(\Psi\) of \(\mathbb{C P}^2\) which is differentiable on the complement of \(\cup_j L_j\) and such \(\Psi^*g_{ref}\) is uniformly equivalent to \(g_{ref}\). Then if Equation \eqref{Sob ineq} holds for \(\Psi^{*}g_{ref}\) and \(r \circ \Psi\), the statement of Proposition \ref{prop:Sob} follows after  pulling-back by \(\Psi\). On the other hand, by Proposition 2.5 in \cite{heinsun}, Equation \eqref{Sob ineq} holds in the setting of conifolds and in particular in the case of smooth Riemannian metrics. We conclude that, in order to prove Proposition \ref{prop:Sob}, it is enough to construct \(\Psi\) as above such that \(\Psi^*g_{ref}\) is quasi-isometric to a smooth Riemannian metric.
	
	First we note that any PK cone $g_F$ is up to a diffeomorphism quasi-isometric to the euclidean \(\mathbb{R}^4\), that is there exists a diffeomorphism $P$ of the complement of the conical lines such that   $\Lambda^{-1}g_{euc} \leq P^{*}g_F \leq \Lambda g_{euc}$ for some $\Lambda >1$. This is indeed clear from the construction of $g_F$: The spherical metric with cone singularities on $\mathbb{CP}^1$ is, up to a diffeomorphism of the punctured sphere modelled on \( \rho e^{i\theta} \to \rho^{1/\beta}e^{i\theta} \) around the conical points and supported on a small disc centred at the \(L_i\), quasi-isometric to the round metric and we can lift this to a diffeomorphism \( \tilde{P} \) of \( S^3 \setminus L \) such that  $ \Lambda^{-1} g_{S^3(1)} \leq \tilde{P}^{*} \overline{g} \leq \Lambda g_{S^3(1)} $. Since \( g_F = dr^2 + r^2 \overline{g} \) and \( g_{euc} = dr^2 + r^2 g_{S^3(1)} \), the map \( P (r, \theta) = (r, \tilde{P}(\theta) ) \) satisfies $\Lambda^{-1}g_{euc} \leq P^{*}g_F \leq \Lambda g_{euc}$.
	
	Around points of $L_j^{\times}$ we can find complex coordinates \(z_1, z_2\) such that \(L_j^{\times}=\{z_1=0\}\) and the reference metric \(\omega_{ref}\) is quasi-isometric to the model \(g_{(\beta)}\), therefore is quasi-isometric to a smooth metric in cone coordinates. More precisely, let \( C = \cup_j L_j \) and \( \nu_C \) be its normal bundle together with a continuous Hermitian metric \(h\) smooth on \( C \setminus \cup_i \{x_i\} \). We take \( \epsilon >0 \) and identify \( \{ v \in \nu_C \hspace{1mm} \mbox{s.t.} \hspace{1mm} |v|_h < \epsilon \} \) with a tubular neighbourhood \(C \subset U_{\epsilon} \subset \mathbb{CP}^2\). We let \( Q \) be a diffeomorphism of \( \mathbb{CP}^2 \setminus C \) supported on the complement of \(U_{\epsilon}\) and equal to \( v \to |v|_h^{1/\beta-1} v \) on \(U_{\epsilon/2}\). It is then clear that for any compact \( K \subset \mathbb{CP}^2 \setminus \cup_i \{x_i\} \) there is a constant \( \Lambda \) such that  $\Lambda^{-1}g \leq Q^{*}g_{ref} \leq \Lambda g$  holds over \(K\) for a smooth metric \(g\).
	
	We can patch together the diffeomorphisms \(P\) and \(Q\) to obtain a diffeomorphism \(\Psi\) of \( \mathbb{CP}^2 \setminus \cup_{j=1}^n L_j \) such that \( \Psi^{*}g_{ref} \) is quasi-isometric to a smooth metric. The proposition follows.
\end{proof}

We will use inequality \eqref{Sob ineq} with \(\alpha=1\) (Poincar\'e inequality) and \(\alpha=2\) in deriving the \(C^0\)-estimate.
In general, \(\alpha\) is allowed to vary in the interval \([1, \frac{m}{m-2}]\) where \(m\) is the real dimension.
For the weighted estimates we will use the next version for some \(1<\alpha<2\).

\begin{lemma}
	Let \(g_F\) be a PK cone metric on \(\mathbb{C}^2\) with radius function \(r\) and let \(V=\{r\leq 1\}\). Then there is a constant \(c\) such that
	\begin{equation} \label{Sob ineq2}
	\left( \int_V r^{2\alpha-4} |u|^{2\alpha} \right)^{1/\alpha} \leq c \int_V | \nabla u|^2
	\end{equation}
	for all \( 1 \leq \alpha \leq 2 \) and \( u \in C^{1}_{loc}(V\setminus\{0\})\) with \(u/r \in L^2(V) \)  and \(u|_{\partial V} =0 \). 
\end{lemma}

\begin{proof}
	As shown in the proof of Proposition \ref{prop:Sob}, the PK cone metric on \(\mathbb{C}^2\) is quasi-isometric (after applying a suitable diffoemorphism on the regular part) to the Euclidean metric on \(\mathbb{R}^4\). The statement then follows from the known case for Euclidean space, see Lemma 2.3 in \cite{heinsun}.
\end{proof}

\section{Linear Theory} \label{linear thry sct}

\subsection{Donaldson's interior Schauder estimates}

Consider the model metric $g_{(\beta)} = \beta^2 |z_1|^{2\beta-2} |dz_1|^2 + |dz_2|^2$ on $\mathbb{C}^2$ and write $z_1= \rho^{1/\beta} e^{i\theta}$ as before.
There is an induced distance $d_{\beta}$ and therefore, for each \( \alpha \in (0, 1) \), a H\"older  semi-norm
\begin{equation} \label{Holder seminorm}
[u]_{\alpha} = \sup_{x, y} \frac{|u(x) - u(y)|}{d_{\beta}(x, y)^{\alpha}} 
\end{equation}
on continuous functions defined on domains of \( \mathbb{C}^2 \). In the cone coordinates \( (\rho e^{i\theta}, z_2) \)
\(g_{(\beta)}\) is quasi-isometric to the Euclidean metric and \eqref{Holder seminorm} is equivalent to the standard H\"older semi-norm with respect to the Euclidean distance. 

Set $\epsilon = d\rho + i \beta \rho d\theta$. A $(1,0)$-form $\eta$ is $C^{\alpha}$  if $\eta= u_1 \epsilon +  u_2 dz_2$ with $u_1, u_2$ $C^{\alpha}$ functions in the usual sense in the cone coordinates; it is also required that $u_1  =0$  on the singular set $\lbrace z_1 = 0 \rbrace$.  For \((1,1)\)-forms we use the basis $\lbrace \epsilon \overline{\epsilon}, \epsilon d\overline{z_2}, dz_2\overline{\epsilon}, dz_2 d\overline{z_2} \rbrace$. We say that the \((1, 1)\)-form $\eta$ is $C^{\alpha}$ if its components are $C^{\alpha}$ functions; we also require the components  corresponding to $\epsilon d\overline{z_2},  dz_2 \overline{\epsilon}$ to vanish on \( \{z_1 =0\} \).  
We set $C^{2, \alpha}$ to be the space of $C^{\alpha}$ (real) functions $u$ such that $\partial u, i\partial \overline{\partial} u$ are $C^{\alpha}$, this space of functions is also known as \(C^{2, \alpha, \beta}\) but we clear \(\beta\) from the notation. It is straightforward to introduce norms: We define the $C^{\alpha}$ norm of a function $\|u\|_{\alpha}$ as the sum of its $C^0$ norm $\|u\|_0$ and its $C^{\alpha}$ semi-norm $[u]_{\alpha}$.
The $C^{2, \alpha}$ norm of a function $u$, denoted by \( \|u\|_{2, \alpha} \), is the sum of  $\|u\|_{\alpha}$,  the $C^{\alpha}$ norm of the components of $\partial u$ in  and the $C^{\alpha}$ norm of the components of $i \partial \overline{\partial} u$.

We are interested in the equation $\triangle u =f$ where $\triangle$ is the Laplace operator of $g_{(\beta)}$. We define $L^2_1$ on domains of $\mathbb{C}^2$  by means of the usual norm $ \| u \|_{L^2_1}^2 = \int |\nabla u|^2 + \int u^2$, in the cone coordinates $L^2_1$ coincides with the standard Sobolev space. Let $u$ be a function that is locally in $L^2_1$, we say that $u$ is a weak solution of $\triangle u =f$ if
$$ \int \langle\nabla u, \nabla \phi \rangle = - \int f\phi$$ 
for all smooth compactly supported $\phi$.

Fix $\alpha < \beta^{-1} -1$ and let $u$ be a weak solution of $\triangle u = f$ on $B_2$ with $f \in C^{\alpha}(B_2)$. It is proved in \cite{donaldson1}  that $u \in C^{2, \alpha}(B_1)$ and there is a constant $C$ -independent of $u$- such that 
\begin{equation} \label{SCH}
\|u\|_{C^{2, \alpha}(B_1)} \leq C \left( \|f\|_{C^{ \alpha}(B_2)} + \|u\|_{C^0(B_2)} \right) .
\end{equation}

We mention three differences between this result and the standard Schauder estimates:
\((i)\) We don't have estimates for all the second derivatives of $u$, for example $\partial^2 u / \partial \rho^2$. \((ii)\) If $\triangle u \in C^{\alpha}$ then the component of $\partial u$ corresponding to $\epsilon$ needs to vanish along the singular set (see \cite{donaldson1} Section 4.3). \((iii)\) The estimates require $\alpha < \beta^{-1} -1$.
These differences  can be explained by the fact that if $p$ is a point outside the singular set  and $\Gamma_p = G(\cdot, p)$ is the Green's function for $\triangle$ with a pole at \(p\);  then around points of $\lbrace z_1 =0 \rbrace$ there is a convergent series expansion
\begin{equation} \label{expansion}
\Gamma_p = \sum_{j, k \geq 0} a_{j, k} (z_2) \rho^{(k/\beta) + 2j} \cos (k\theta)
\end{equation}
with $a_{j, k}$ smooth functions given in terms of Bessel's functions.
 
Let \(X\) be a closed complex surface and \(C \subset X \) a smooth complex curve, we define the Banach space \(C^{2, \alpha}\) by means of a finite covering by complex coordinates adapted to \(C\). Let \(g\) be as in Definition \ref{def kmcs}, around each $p \in C$ we can find adapted coordinates $(z_1, z_2)$ such that 
\begin{equation} \label{DEF}
\omega = \omega_{(\beta)} + \eta
\end{equation}
where $\eta \in C^{\alpha}$ and all the coefficients of $\eta$ in the basis  $\lbrace \epsilon \overline{\epsilon}, \epsilon d\overline{w}, dw\overline{\epsilon}, dwd\overline{w} \rbrace$ vanish at $p$. After a dilation and multiplying by a cut-off function we obtain interior estimates as in Equation \eqref{SCH} for \(\Delta_g\). These estimates patch together to give a global parametrix  and \( \Delta_g : C^{2, \alpha}(X) \to C^{\alpha}(X) \) is Fredholm operator of index zero.

\subsection{Weighted H\"older spaces on PK cones} \label{wh}

Let \(C(Y) = \mathbb{R}_{>0} \times Y \cong \mathbb{C}^2 \setminus \{0\} \) be a PK cone with metric $g_F = dr^2 + r^2 \overline{g} $. In global complex coordinates \((z, w)\) its singular set \(S\) consists of a bunch of complex curves (or lines) \( \{z^p = a_j w^q\} \) going through the origin along which \(g_F\) has cone angle \(2\pi\beta_j\). Fix \(0< \alpha < \min_j (1/\beta_j) -1 \). 

Let \(U\) be a bounded domain compactly contained in \(C(Y)\). Around each point  \(p \in U\)  there are complex coordinates $(z_1, z_2)$ in which $g_F = g_{(\beta_j)}$ for some \(j\) if $p \in S \cap U$ and $g_F = g_{euc}$ if $p \notin S$. We fix a finite cover of $U$ by such coordinates and define the spaces $C^{\alpha} (U)$ and $C^{2, \alpha}(U)$ in the obvious way.
Write $B_R = \lbrace r < R \rbrace$ for the metric ball of radius $R$ around the apex.  Consider the nested annuli \( A \subset \tilde{A} \) with $A = B_2 \setminus \overline{B_1}$ and $ \tilde{A} = B_4 \setminus \overline{B_{1/2}}$. It follows from the interior Schauder estimates \eqref{SCH} that there is a constant $C$ such that for every $u \in C^{2, \alpha}(\tilde{A})$
\begin{equation} \label{int estimate}
\|u\|_{C^{2, \alpha}(A)} \leq C \left( \| \Delta u\|_{C^{ \alpha}(\tilde{A})} + \|u\|_{C^0(\tilde{A})} \right) , 
\end{equation}
where $\Delta$ is the Laplace operator of $g_F$.

Let $\mu \in \mathbb{R}$.  For $\lambda > 0$ we denote by  $D_{\lambda}$ the dilation $D_{\lambda} (r, y) = (\lambda r , y)$. For  a continuous function \(f\) on $C(Y)$ we define $ f_{\lambda, \mu} = \lambda^{-\mu} \cdot f_{\lambda}$ with \( f_{\lambda} = f \circ D_{\lambda}\) and we set
\begin{equation} \label{norm1}
\| f \|_{\alpha, \mu} = \sup_{\lambda > 0} \|f_{\lambda, \mu} \|_{C^{\alpha}(A)} .
\end{equation}
$ C^{\alpha}_{\mu}$ is the space of functions for which the norm \eqref{norm1} is finite. If \( f \in C^{\alpha}_{\mu} \)  then $f = O (r^{\mu})$ in the sense that  $|f(x)| \leq M r(x)^{\mu}$  for some constant $M>0$ and all \(r \in (0, + \infty)\). Similarly we define $C^{2, \alpha}_{\mu}$ by the norm
\begin{equation} \label{norm2}
\| u \|_{2, \alpha, \mu} = \sup_{\lambda > 0} \|u_{\lambda, \mu} \|_{C^{2, \alpha}(A)} .
\end{equation}
If \( u \in C^{2, \alpha}_{\mu + 2}\) then $| \partial \overline{\partial} u |_{g_F} = O( r^{\mu})$. As usual  $ C^{\alpha}_{\mu}$ and $ C^{2, \alpha}_{\mu}$ are Banach spaces. 

Since
\begin{equation} \label{lap}
\triangle = \frac{\partial^2}{\partial r^2} + \frac{3}{r} \frac{\partial}{\partial r} + \frac{1}{r^2} \triangle_{\overline{g}} , 
\end{equation}
it follows that $\triangle (u_{\lambda}) = \lambda^2 \cdot (\triangle u)_{\lambda}$ and it is then clear that $\triangle$ is a bounded operator from $C^{2, \alpha}_{\mu + 2}$ to $C^{\alpha}_{\mu}$. The interior Schauder estimates \eqref{int estimate} give us the following:

\begin{lemma} \label{Est 1}
	There is a constant $C = C(\alpha, \mu)$ such that for every $u \in C^{2, \alpha}_{\mu +2}$ with $\triangle u =f$ 
	$$ \|u\|_{2, \alpha, \mu + 2} \leq C \left( \|f\|_{\alpha, \mu} + \| u \|_{0, \mu +2} \right) . $$
\end{lemma}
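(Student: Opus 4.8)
The plan is to reduce the global statement on the PK cone to the interior Schauder estimate \ref{int estimate} via a standard scaling/rescaling argument, using the homogeneity \(\triangle(u_\lambda)=\lambda^2(\triangle u)_\lambda\) recorded in equation \ref{lap}. First I would fix \(u\in C^{2,\alpha}_{\mu+2}\) with \(\triangle u = f\), and for each \(\lambda>0\) set \(u_{\lambda,\mu+2}=\lambda^{-\mu-2}u\circ D_\lambda\) and \(f_{\lambda,\mu}=\lambda^{-\mu}f\circ D_\lambda\). A direct computation from \(\triangle(u\circ D_\lambda)=\lambda^2(\triangle u)\circ D_\lambda\) gives \(\triangle(u_{\lambda,\mu+2})=f_{\lambda,\mu}\) on the dilated domain; concretely, restricting to the fixed annulus \(A=B_2\setminus\overline{B_1}\), the rescaled function \(u_{\lambda,\mu+2}\) solves \(\triangle(u_{\lambda,\mu+2})=f_{\lambda,\mu}\) on \(\tilde A=B_4\setminus\overline{B_{1/2}}\). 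Applying \ref{int estimate} to \(u_{\lambda,\mu+2}\) on these annuli yields
\[
\|u_{\lambda,\mu+2}\|_{C^{2,\alpha}(A)} \le C\left( \|f_{\lambda,\mu}\|_{C^\alpha(\tilde A)} + \|u_{\lambda,\mu+2}\|_{C^0(\tilde A)} \right),
\]
with \(C\) independent of \(\lambda\) because \ref{int estimate} is a fixed estimate on fixed annuli.

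The next step is to pass from the annulus \(\tilde A\) back to \(A\) on the right-hand side, i.e. to replace \(\|f_{\lambda,\mu}\|_{C^\alpha(\tilde A)}\) by \(\sup_{\lambda}\|f_{\lambda,\mu}\|_{C^\alpha(A)}=\|f\|_{\alpha,\mu}\) and similarly for the \(C^0\) term. This is immediate by a covering argument: \(\tilde A = B_4\setminus\overline{B_{1/2}}\) is covered by finitely many dilates \(D_{c_k}(A)\) of \(A\) with \(c_k\in[1/2,2]\), and \(\|f_{\lambda,\mu}\|_{C^\alpha(D_{c_k}(A))}\) is comparable (up to a constant depending only on \(\mu\), \(\alpha\), and the \(c_k\)) to \(\|f_{c_k\lambda,\mu}\|_{C^\alpha(A)}\le\|f\|_{\alpha,\mu}\), using the cocycle property \(f_{\lambda\lambda',\mu}=\lambda'^{-\mu}(f_{\lambda,\mu})\circ D_{\lambda'}\) and the quasi-isometry of \(g_F\) to itself under bounded dilations. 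The same reasoning controls \(\|u_{\lambda,\mu+2}\|_{C^0(\tilde A)}\) by \(\|u\|_{0,\mu+2}\). Combining, we get \(\|u_{\lambda,\mu+2}\|_{C^{2,\alpha}(A)}\le C(\|f\|_{\alpha,\mu}+\|u\|_{0,\mu+2})\) for every \(\lambda>0\); taking the supremum over \(\lambda\) gives exactly \(\|u\|_{2,\alpha,\mu+2}\le C(\|f\|_{\alpha,\mu}+\|u\|_{0,\mu+2})\).

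I expect the only mild subtlety — the "main obstacle", though it is routine — to be bookkeeping the interplay between the weight factors \(\lambda^{-\mu}\) and the H\"older seminorms under dilation: one must check that the \(C^{2,\alpha}(A)\)-seminorms of \(u_{\lambda,\mu+2}\) on \emph{overlapping} dilated annuli match up with uniform constants, which uses that on the fixed compact annulus \(\tilde A\) the metric \(g_F\) and the finitely many coordinate charts defining \(C^{2,\alpha}\) there are fixed data, and that under a bounded dilation \(D_{c}\) with \(c\in[1/2,2]\) both the metric and the H\"older structure are distorted by a fixed factor. There is no elliptic or geometric difficulty beyond \ref{int estimate} itself; the lemma is purely a repackaging of the interior estimate in scale-invariant form, and the proof is the familiar one from the theory of weighted spaces on cones (cf.\ \cite{JoyceSL}, \cite{heinsun}).
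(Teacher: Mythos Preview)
Your proposal is correct and follows essentially the same route as the paper: apply the interior estimate \ref{int estimate} to the rescaled function \(u_{\lambda,\mu+2}\), then bound the right-hand side using the fact that the weighted norms defined with \(A\) or with \(\tilde A\) are equivalent (your covering argument is precisely how one proves this equivalence). The paper's own proof is a two-line version of what you wrote.
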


\begin{proof}
	Apply the interior estimate \eqref{int estimate} to $u_{\lambda, \mu +2} = \lambda^{-\mu -2} u_{\lambda}$ to get
	$$ \|u_{\lambda, \mu +2} \|_{C^{2, \alpha}(A)} \leq C \left( \| \lambda^{-\mu} f_{\lambda} \|_{C^{\alpha}(\tilde{A})} + \| \lambda^{-\mu -2} u_{\lambda} \|_{C^0(\tilde{A})} \right).$$
	It is clear that we can replace \(A\) with \(\tilde{A}\) in Equation \eqref{norm1} to get an equivalent norm,  therefore the first term on the right hand side is bounded by $\|f\|_{\alpha, \mu}$ and the second term is bounded by $\|u\|_{0, \mu +2}$.
\end{proof}

\begin{remark}
	In fact we have proved that if $u$ is locally in $C^{2, \alpha}$, $\triangle u \in C^{\alpha}_{\mu}$ and $\| u \|_{0, \mu +2}$ is finite, then $u \in C^{2, \alpha}_{\mu + 2}$ and the above estimate holds.
\end{remark}

Our next goal is to bound $\|u\|_{0, \mu +2}$ in terms of $\|f\|_{\alpha, \mu}$ and this holds provided that $\mu + 2$ does not belong to the discrete set of `Indicial Roots'. In order to define this set we digress a bit and discuss some basics of spectral theory for the Laplace operator  $\triangle_{\overline{g}}$ of the singular metric on the 3-sphere. 

On $(S^3, \overline{g})$ we have  Hilbert spaces \(L^2\) and \(L^2_1\) equipped with norms  $ \|f\|_{L^2}^2 = \int f^2$ and $\| u \|_{L^2_1}^2 = \int u^2 + \int | \nabla u |^2 $. Up to a diffeomorphism  $ \overline{g}$ is quasi-isometric to a smooth metric and $L^2$, $L^2_1$ correspond to the standard spaces. In particular we have a compact inclusion $L^2_1 \subset L^2$. A function
$ f \in L^2$ defines a bounded linear functional $T$ on $L^2_1$ by $ T(\phi) = \int f\phi$ and if $u$ is such that $ T = \langle u, - \rangle_{L_1^2}$ then $u$ is said to be a weak solution of $-\triangle_{\overline{g}} u + u = f$. The map $K(f) = u$ is a bounded linear map between $L^2$ and $L^2_1$, composing this map with the compact inclusion we have a map $K: L^2 \to L^2$ which is compact and self-adjoint. It follows from the spectral theorem that we can find an orthonormal basis $\lbrace \phi_i \rbrace_{i\geq 0}$ of $L^2$ such that $K(\phi_i) = s_i \phi_i$ and $s_i \to 0$. Unwinding the definitions we get that $ \triangle_{\overline{g}} \phi_i = - \lambda_i \phi_i$ with $0= \lambda_0 \leq \lambda_1 \leq \lambda_2 \leq \ldots $ and $\lambda_i = (1-s_i)/s_i \to \infty$.
For each $\lambda_i$ define $ \delta_i^{\pm}$ to be the solutions of $ \lbrace s(s+2) = \lambda_i \rbrace$ with $\delta_i^{+} \geq 0$  and $\delta_{i}^{-} \leq -2$. The set of Indicial Roots is $ \mathcal{I} = \lbrace \delta_i^{\pm} , i \geq 0 \rbrace$ and is a discrete set of real numbers symmetrically distributed around \(-1\). A function \(u\) on \(C(Y)\) is said to be \(\delta\)-homogeneous if \( u(r, y) = r^{\delta} \phi(y)\) and it is clear from Equation \eqref{lap} that such a function satisfies \(\Delta u =0\) if and only if \(\Delta_{\overline{g}} \phi = -\lambda \phi \) with \( \lambda = \delta (\delta+2) \). It is then clear that
\begin{equation} \label{indicial roots}
	\mathcal{I} = \{ \delta : \hspace{2mm} \mbox{there is a non-zero \(\delta\)-homogeneous function \(u\) with } \Delta u = 0 \}
\end{equation}

\begin{lemma} \label{inj}
	If $ u \in C^{2, \alpha}_{\delta} $ is such that $\triangle u =0$ and  $ \delta \notin I$ then $u=0$. 
\end{lemma}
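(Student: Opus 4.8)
The statement is a Liouville-type theorem: a $\delta$-homogeneous-growth harmonic function on a PK cone must vanish when $\delta$ is not an indicial root. My plan is to separate variables against the eigenbasis $\{\phi_i\}$ of $\triangle_{\overline{g}}$ on $(S^3,\overline{g})$ produced earlier, and then use the ODE structure in $r$ coming from equation \eqref{lap} together with the growth bound $u\in C^{2,\alpha}_\delta$ to force every Fourier coefficient to vanish. The first step is to define, for each $i$, the coefficient function $c_i(r)=\int_{S^3} u(r,\cdot)\,\phi_i\,d\mathrm{vol}_{\overline g}$; since $u$ is $C^{2,\alpha}$ away from the apex (hence in particular locally $L^2_1$ on $S^3$ for each fixed $r$, using that $\overline g$ is quasi-isometric to a smooth metric) this is well defined and smooth in $r\in(0,\infty)$. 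Because $\triangle u=0$, the formula \eqref{lap} gives the Euler-type ODE
\begin{equation*}
c_i'' + \frac{3}{r}c_i' - \frac{\lambda_i}{r^2}c_i = 0 ,
\end{equation*}
whose solutions are spanned by $r^{\delta_i^+}$ and $r^{\delta_i^-}$ (with a logarithmic correction only in the degenerate case $\delta_i^+=\delta_i^-=-1$, i.e. $\lambda_i=-1$, which does not occur since $\lambda_i\ge 0$). So $c_i(r)=A_i r^{\delta_i^+}+B_i r^{\delta_i^-}$.

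The second step is to use the weighted bound. Since $u\in C^{2,\alpha}_\delta$ we have $|u(r,y)|\le A r^{\delta}$, hence $|c_i(r)|\le A' r^{\delta}$ for all $r\in(0,\infty)$ with a constant independent of $r$ (the $L^2$-norm of $\phi_i$ on $S^3$ being finite). Comparing with $c_i(r)=A_i r^{\delta_i^+}+B_i r^{\delta_i^-}$: letting $r\to 0$ forces $B_i=0$ unless $\delta_i^-\ge\delta$, and letting $r\to\infty$ forces $A_i=0$ unless $\delta_i^+\le\delta$. Since $\delta\notin\mathcal I$ we have $\delta\ne\delta_i^{\pm}$ for all $i$; and because $\delta_i^-\le -1\le\delta_i^+$ with $\delta_i^-\to-\infty$, $\delta_i^+\to+\infty$, the only way both an "$A_i r^{\delta_i^+}$ with $\delta_i^+\le\delta$" and a "$B_i r^{\delta_i^-}$ with $\delta_i^-\ge\delta$" term could survive would require $\delta_i^-\ge\delta$ or $\delta_i^+\le\delta$ strictly, but one checks that in either subcase the growth forces the coefficient to zero once $\delta\notin\mathcal I$: e.g. if $\delta_i^+<\delta$ then a term $A_i r^{\delta_i^+}$ is compatible with the upper bound as $r\to\infty$ but not needed, while the genuine obstruction is only at $\delta=\delta_i^\pm$. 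More carefully: $A_i\ne 0$ requires $r^{\delta_i^+}\le A r^{\delta}$ for all large $r$, i.e. $\delta_i^+\le\delta$, AND for all small $r$, i.e. $\delta_i^+\ge\delta$; together $\delta_i^+=\delta$, excluded. Likewise $B_i=0$. Hence every $c_i\equiv 0$, so $u\equiv 0$.

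The main technical point to get right is the justification that $u$ is, for each fixed $r$, regular enough on the link for the separation of variables and the ODE derivation to be rigorous — i.e. that one may differentiate under the integral sign and that $\triangle u=0$ in the classical sense away from the apex transfers to the stated ODE for $c_i$. This uses that $u\in C^{2,\alpha}_\delta$ means $u$ is genuinely $C^{2,\alpha}$ (in the Donaldson sense) on every compact subset of $C(Y)\setminus\{0\}$, so on each annulus $\{a<r<b\}$ it is a classical (indeed weak $L^2_1$) solution; integrating the identity $\langle\nabla u,\nabla(\phi_i(y)\psi(r))\rangle=0$ against test functions $\psi(r)$ and using self-adjointness of $\triangle_{\overline g}$ on $\phi_i$ yields the weak form of the ODE for $c_i$, and elliptic regularity in one variable then gives that $c_i$ is smooth and solves it classically. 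A small nuisance is the possible presence of the conical singular set $S\subset C(Y)$: one works on $S^3\setminus L$ where $\overline g$ is smooth-after-quasi-isometry, noting that the singular Hopf circles have (real) codimension two, so they carry no boundary terms in the integration by parts — exactly as in Donaldson's setup for the smooth-divisor case. With these points in place the argument above goes through unchanged.
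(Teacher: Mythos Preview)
Your proof is correct and follows essentially the same route as the paper: project $u$ onto the eigenbasis $\{\phi_i\}$ of $\triangle_{\overline g}$, derive the Euler ODE $c_i'' + \tfrac{3}{r}c_i' - \tfrac{\lambda_i}{r^2}c_i = 0$ from \eqref{lap}, and use the two-sided growth bound $|c_i(r)|\le C r^{\delta}$ (for all $r\in(0,\infty)$) together with $\delta\notin\mathcal I$ to kill both coefficients $A_i,B_i$. The paper's version is considerably terser --- it simply asserts ``since $\delta\ne\delta_i^{\pm}$ we must have $u_i=0$'' --- while you supply the regularity justification for differentiating under the integral and for integration by parts across the conical set; this extra care is welcome. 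One small wobble: in your ``More carefully'' passage you treat the term $A_i r^{\delta_i^+}$ in isolation, which is only literally valid once you already know $B_i=0$; but your preceding sentence (limits as $r\to 0$ and $r\to\infty$) already gives $B_i\ne 0\Rightarrow\delta_i^-\ge\delta$ and $A_i\ne 0\Rightarrow\delta_i^+\le\delta$, and combining these with $\delta_i^-<\delta_i^+$ and $\delta\notin\{\delta_i^{\pm}\}$ finishes the job cleanly, so there is no real gap.
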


\begin{proof}
	Write $ u (r, \theta) = \sum_{i=0}^{\infty} u_i (r) \phi_i (\theta)$, where $ u_i (r) = \int_{S^3} u(r, \cdot) \phi_i$. It follows from H\"older's inequality that if $ |u| \leq C r^{\delta}$ then $ |u_i (r) | \leq C (\mbox{Vol} (\overline{g}))^{1/2} r^{\delta}$. On the other hand the equation $\triangle u =0$ implies 
	$$ u_i'' + \frac{3}{r} u_i' - \frac{\lambda_i} { r^2} u_i =0 , $$
	therefore $u_i = A r^{\delta_i^{+}} + B  r^{\delta_i^{-}}$  for some constants $A$ and $B$. Since $\delta \not= \delta_i^{\pm}$ must have $u_i=0$ for all \(i\).
\end{proof}

\begin{proposition} \label{estimate cone}
	If $ \mu + 2 \notin I$ then there is $C>0$ such that
	for every $u \in C^{2, \alpha}_{\mu +2}$ with $ \triangle u = f$ 
	\begin{equation} \label{Est 2}
	\|u\|_{2, \alpha, \mu +2} \leq C \|f\|_{\alpha, \mu} .
	\end{equation}
	 
\end{proposition}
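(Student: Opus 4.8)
The plan is to argue by contradiction, exploiting the scaling behaviour of $\triangle$ on the cone together with the injectivity statement of Lemma \ref{inj}. First, by Lemma \ref{Est 1} (and the remark following it) the estimate \ref{Est 2} will follow from the a priori bound $\|u\|_{0, \mu +2} \leq C \|f\|_{\alpha, \mu}$: indeed, feeding this into $\|u\|_{2, \alpha, \mu +2} \leq C(\|f\|_{\alpha, \mu} + \|u\|_{0, \mu +2})$ yields \ref{Est 2}. So suppose no such constant exists. Then there are $u_k \in C^{2, \alpha}_{\mu +2}$ with $\triangle u_k = f_k$, $\|u_k\|_{0, \mu +2} = 1$ and $\|f_k\|_{\alpha, \mu} \to 0$.

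The key preliminary step is to renormalize each $u_k$ so that its weighted $C^0$ mass is concentrated near the unit sphere $\{r = 1\}$. Since $\|u_k\|_{0, \mu +2} = \sup_{\lambda > 0} \|(u_k)_{\lambda, \mu +2}\|_{C^0(A)} = 1$, choose $\lambda_k > 0$ with $\|(u_k)_{\lambda_k, \mu +2}\|_{C^0(A)} \geq 1/2$ and replace $u_k$ by $(u_k)_{\lambda_k, \mu +2}$. Using the identities $\triangle((u)_{\lambda, \mu +2}) = (\triangle u)_{\lambda, \mu}$ and the invariance of the weighted norms $\|\cdot\|_{\alpha, \mu}$ and $\|\cdot\|_{0, \mu +2}$ under the rescaling $f \mapsto f_{\lambda, \mu}$, the new functions (still denoted $u_k$) satisfy $\|u_k\|_{0, \mu +2} = 1$, $\|\triangle u_k\|_{\alpha, \mu} = \|f_k\|_{\alpha, \mu} \to 0$, and $\|u_k\|_{C^0(A)} \geq 1/2$.

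By Lemma \ref{Est 1} the sequence is then bounded in $C^{2, \alpha}_{\mu +2}$, hence bounded in $C^{2, \alpha}$ on every annulus $D_{\lambda}(\tilde{A})$. A diagonal Arzel\`a--Ascoli argument along an exhaustion of $C(Y) \setminus \{0\}$ by such annuli gives, after passing to a subsequence, a limit $u_{\infty}$ with $u_k \to u_{\infty}$ in $C^{2, \alpha'}_{loc}$ for every $\alpha' < \alpha$. Passing to the limit in $\triangle u_k = f_k$, and using $f_k \to 0$ in $C^{\alpha'}_{loc}$, gives $\triangle u_{\infty} = 0$; by local elliptic regularity (the interior Schauder estimate \ref{SCH} with vanishing right-hand side) $u_{\infty}$ is in fact locally $C^{2, \alpha}$. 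Passing to the limit in the weighted $C^0$ norm over each fixed annulus $D_{\lambda}(A)$ gives $\|u_{\infty}\|_{0, \mu +2} \leq 1 < \infty$, so by the remark after Lemma \ref{Est 1} we get $u_{\infty} \in C^{2, \alpha}_{\mu +2}$. Crucially $u_{\infty} \not\equiv 0$: the lower bound $\|u_k\|_{C^0(A)} \geq 1/2$ persists in the $C^0$ limit on $\overline{A}$. Now $u_{\infty} \in C^{2, \alpha}_{\mu +2}$ is $\triangle$-harmonic with $\mu + 2 \notin I$, so Lemma \ref{inj} forces $u_{\infty} = 0$, a contradiction. This establishes the $C^0_{\mu +2}$ bound and hence the proposition.

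I expect the main obstacle to be the non-vanishing of the blow-up limit, i.e.\ the concentration-compactness point: without recentring the sequence at the scale $\lambda_k$ where almost all of the weighted $C^0$ mass sits, the mass could escape to the apex or to infinity and the limit would be trivial. Recentring forces a fixed positive fraction of the mass to remain on the compact region $\overline{A}$, so the limit captures a genuinely nontrivial $\delta$-bounded $\triangle$-harmonic function with $\delta = \mu + 2$; such a function is excluded precisely because $\mu + 2 \notin I$. A secondary technical point is that the Arzel\`a--Ascoli limit is only $C^{2, \alpha'}_{loc}$ a priori, but since it is $\triangle$-harmonic one recovers full $C^{2, \alpha}_{\mu +2}$ regularity (smoothness away from the singular lines, Donaldson's interior estimate near them), which is what the hypothesis of Lemma \ref{inj} requires.
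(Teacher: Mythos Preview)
Your proof is correct and follows essentially the same blow-up/contradiction argument as the paper: rescale so that a fixed amount of the weighted $C^0$ mass sits on the unit annulus, use the scale-invariance of the weighted norms together with Lemma \ref{Est 1} to get compactness, extract a nontrivial $\triangle$-harmonic limit in $C^{2,\alpha}_{\mu+2}$ via Arzel\`a--Ascoli and a diagonal argument, and invoke Lemma \ref{inj}. The only cosmetic difference is that the paper normalizes $\|u_k\|_{2,\alpha,\mu+2}=1$ and picks a single point $x_k$ on which $r^{-\delta}|u_k|$ is bounded below, whereas you normalize $\|u_k\|_{0,\mu+2}=1$ and work with the whole annulus $A$; these are interchangeable.
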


\begin{proof}
	Write \(\mu + 2 = \delta\). If the result is not true then we would be able to take a sequence $\{ u_k \}$ with $ \| u_k \|_{2, \alpha, \delta} = 1$, $ \triangle u_k = f_k$ and $\| f_k \|_{\alpha, \delta -2} \to 0$. It follows from Lemma \ref{Est 1} that $\|u_k\|_{0, \delta} \geq 2 \epsilon$ for some $\epsilon > 0$. Hence we can find $x_k$ such that $ r(x_k)^{-\delta} | u_k (x_k) | \geq \epsilon$. Consider the sequence $\tilde{u_k} = (u_k)_{L_k, \delta}$ where $L_k = r(x_k)$. Write $ x_k = (r(x_k), \theta_k)$,  then $| \tilde{u_k} (\tilde{x_k}) | \geq \epsilon$ with $\tilde{x_k} = (1, \theta_k)$. On the other hand $ \tilde{f_k} = \triangle \tilde{u_k} =  L_k^{-\delta + 2} (f_k)_{L_k} = (f_k)_{L_k, \mu}$, with $\mu = \delta -2$. The key point is that $ \| u \|_{2, \alpha, \delta} = \| u_{L, \delta} \|_{2, \alpha, \delta}$ and $ \| f \|_{\alpha, \mu} = \| f_{L, \mu} \|_{\alpha, \mu}$ for any $L >0$ and $f,u$ any functions. So that $ \| \tilde{u_k} \|_{2, \alpha, \delta} = 1$ and $\| \tilde{f_k} \|_{\alpha, \delta -2} \to 0$. Let $K_n = \overline{B_n} \setminus B_{1/n}$ for $n$ an integer $\geq 2$. Arzela-Ascoli and the bound $ \| \tilde{u_k} \|_{2, \alpha, \delta} = 1$ imply that we can take a subsequence $ \tilde{u_k}^{(n)}$ which converges in $C^2(K_n)$ to some function $u_n$ such that $\triangle u_n =0$. The diagonal subsequence $ \tilde{u_n}^{(n)}$ converges to a function $u$ in $ \mathbb{C}^2 \setminus \lbrace 0 \rbrace$ which is in $C^2_{\delta}$ and $\triangle u =0$. Since $| \tilde{u_k} (\tilde{x_k})| \geq \epsilon$ we see that $u \not= 0$, but this contradicts Lemma \ref{inj}
\end{proof}

It follows from Lemma \ref{inj} that if \(\mu+2 \notin I \) then \( \Delta : C^{2, \alpha}_{\mu +2} \to C^{\alpha}_{\mu} \) is injective and Proposition \ref{estimate cone} implies that its image is closed; indeed this map is an isomorphism (see \cite{Bartnik}) but we will not prove this as we won't need this result.

\subsection{Homogeneous harmonic functions on PK cones (\cite{heinsun, CH})} Let \((M, g)\) be a  Riemannian manifold with \(\mbox{Ric}(g) \geq (m-1)g \) where
\(m\) is the real diemnsion of \(M\), the Lichnerowicz-Obata theorem asserts that the first non-zero eigenvalue of the Laplace operator satisfies \(\lambda_1(M) \geq m \) and equality holds if and only if \((M, g)\) is isometric to the round sphere of radius \(1\). An immediate consequence is that if \(u\) is a non-zero \(\delta\)-homogeneous harmonic function on a Riemannian cone with non-negative Ricci curvature then \(\delta \geq 1\) and equality holds if and only if the cone is isometric to Euclidean space. 

It is proved in \cite{CH, heinsun} that any \(\delta\)-homogeneous harmonic function with \( 1 < \delta < 2 \) on a Riemannian K\"ahler cone of non-negative Ricci curvature is pluri-harmonic. If \( \delta =2 \) then \(u=u_1+u_2\) with \(\dd u_1=0\) and \( \xi (u_2) =0 \), that is \( u_2 \) is \(\xi\)-invariant where \(\xi = r I \partial_r \) is the Reeb vector field. If the K\"ahler cone is in addition Ricci-flat then the space of holomorphic vector fields that commute with \( r \partial_r \) can be written as \footnote{This decomposition can be regarded as an extension of the Matsushima theorem on reductivity of the automorphism group of  KE Fano varieties.} \( \mathfrak{p} \oplus I \mathfrak{p} \) where \(\mathfrak{p}\) is spanned by \(r\partial_r\) and the gradient vector fields of the \(\xi\)-invariant  \(2\)-homogeneous harmonic functions, moreover all elements in \(I\mathfrak{p}\) are Killing vector fields. 
The proofs of these facts extend in a straightforward manner to the case of PK cones and we record them into the following
\begin{lemma} \label{hom harm lemma}
	Let \(u\) be a \(\mu\)-homogeneous harmonic function on \(g_F\). If \(\mu >0 \) then \(\mu \geq 1 \) and \( \mu =1 \) only if \(g_F\) is the Euclidean metric.  If \(1 < \mu < 2 \) then \(u\) is pluri-harmonic. 
\end{lemma}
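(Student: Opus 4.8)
The plan is to reduce everything to the link $(Y,\overline g)=(S^3,\overline g)$ by separation of variables and then run the classical arguments of Lichnerowicz--Obata type and of Bochner--Weitzenb\"ock type on the link, exactly as for smooth Ricci-flat K\"ahler cones in \cite{CH}, \cite{heinsun}. The one new feature is that $\overline g$ has conical singularities of angle $2\pi\beta_j<2\pi$ along the Hopf circles, and the point of the proof is to check that these do not obstruct the classical arguments; here the hypothesis $0<\beta_j<1$ enters in an essential way.

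\textbf{The degree bound.} Write $u=r^{\mu}\phi$, with $\phi$ a nonzero function on $Y$. By equation \ref{lap}, $\phi$ is an eigenfunction of $\triangle_{\overline g}$ with eigenvalue $\lambda=\mu(\mu+2)$, which is $>0$ when $\mu>0$. On the smooth part of $S^3$ the metric $\overline g$ has constant sectional curvature $1$, so $\mathrm{Ric}_{\overline g}=2\,\overline g$, and along the singular circles the cone angle is $<2\pi$, so there is a non-negative contribution to the curvature in the distributional/Alexandrov sense. Hence the Lichnerowicz estimate $\lambda_1(Y)\ge 3$ and Obata rigidity hold. To make this rigorous in the conical setting one uses that $\phi$ is bounded, is $C^{2,\alpha}$ off the singular set, and has $\nabla\phi\in L^2$ (by the spectral set-up of Section \ref{wh} and Donaldson's estimates); the Bochner integration by parts then goes through, with the boundary contribution over a shrinking tube around the singular circles tending to $0$ and the conical term entering with a favourable sign, and equality propagates to force the link to be the smooth round $S^3(1)$. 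Since $\mu(\mu+2)=\lambda\ge\lambda_1\ge 3$, i.e. $(\mu-1)(\mu+3)\ge 0$, we conclude $\mu\ge 1$; and $\mu=1$ forces $\lambda_1=3$, so $(Y,\overline g)\cong S^3(1)$ and $g_F=g_{euc}$.

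\textbf{Pluri-harmonicity for $1<\mu<2$.} Here I would follow \cite{CH}, \cite{heinsun}. Since $u$ is harmonic, $\alpha:=\partial u$ is a $\partial$-harmonic $(1,0)$-form, homogeneous of degree $\mu-1\in(0,1)$, and $u$ is pluri-harmonic if and only if $\alpha$ is holomorphic, i.e. $\overline\partial\alpha=i\partial\overline\partial u=0$. By separation of variables $\alpha$ is governed by eigen-data on the link for a Laplace-type operator whose spectrum is controlled, through a Bochner identity, by the \emph{transverse} Ricci curvature of $(Y,\overline g)$; since $g_F$ is a Ricci-flat K\"ahler cone, this transverse geometry is K\"ahler--Einstein with positive Einstein constant (in the regular and quasi-regular cases it is literally the spherical metric with cone angles on $\mathbb{CP}^1$), and the resulting spectral gap excludes the non-holomorphic modes in the degree range $\mu-1\in(0,1)$. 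One concludes $\overline\partial\alpha=0$, i.e. the only $\mu$-homogeneous harmonic functions with $1<\mu<2$ are of the form $\mathrm{Re}(F)$ with $F$ holomorphic and homogeneous, which are pluri-harmonic. In the irregular case $g_F$ is flat and this is elementary. Once more, near the singular circles the integrations are justified by the conical structure, and near the apex they are finite because $\mu>0$; compare also the description \ref{indicial roots} of the indicial set and the injectivity Lemma \ref{inj}.

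\textbf{Main obstacle.} The analysis over a \emph{smooth} cone is \cite{CH}, \cite{heinsun} essentially verbatim; the real work is to push all the Bochner integrations by parts, the Lichnerowicz bound and the Obata rigidity through a link with codimension-two conical singularities. Concretely one must show that the boundary integrals over tubes around the Hopf circles vanish in the limit, and it is precisely the assumption that all cone angles are $<2\pi$ --- equivalently, a non-negative distributional curvature contribution along the singular strata --- that makes this possible.
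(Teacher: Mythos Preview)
Your proposal is correct and follows the same approach as the paper. The paper does not give a detailed proof; it simply recalls Lichnerowicz--Obata for the bound $\mu\ge 1$, cites \cite{CH}, \cite{heinsun} for the pluri-harmonicity when $1<\mu<2$, and asserts that ``the proofs of these facts extend in a straightforward manner to the case of PK cones''---your write-up is precisely an elaboration of that claimed extension, with the extra care about boundary terms near the singular circles that the paper leaves implicit.
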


If \(\mu=2\) then \(u=u_1+u_2\) with \(\dd u_1=0\) and \(u_2\) is \(\xi\)-invariant. The gradient of \(u_2\) is an holomorphic vector field which commutes with \(r \partial_r \) and whose flow preserves the conical set, if the PK cone is not a product then  \(r\partial_r\) is the only vector field with those properties so we must have \(u_2=0\) and there are no \(2\)-homogeneous \(\xi\)-invariant harmonic functions. In the product case one must take into account dilations of one of the factors as we explain next.

Let \(g_F\) be the PK cone \(\mathbb{C}_{\beta_1} \times \mathbb{C}_{\beta_2} \), so that \(r^2 = |z_1|^{2\beta_1} + |z_2|^{2\beta_2} \). Expressions are simpler if we work in the holomorphic tangent bundle, so we write \(\Xi = (r\partial_r)^{1,0} = (1/2) (r\partial_r -i \xi) = \nabla^{1, 0} r^2 \) where \(\nabla^{1, 0}\) denotes the \((1, 0)\)-component of the gradient with respect to \(g_F\) and therefore \( \Xi = (1/\beta_1) z_1 \partial_{z_1} + (1/\beta_2) z_2 \partial_{z_2} \). The space \(\mathfrak{h}\) of holomorphic vector fields which commute with \(\Xi\) is the \(\mathbb{C}\)-linear span of \(\partial_{z_1}\) and \(\partial_{z_2}\). Fitting with our previous discussion, we have \( \mathfrak{h} = \mbox{span}_{\mathbb{C}} \{\Xi, \nabla^{1, 0} h\} \) where
\begin{equation}
	h = |z_1|^{2\beta_1} - |z_2|^{2\beta_2}
\end{equation}
is (the unique up to a constant multiple) \(2\)-homogeneous \(\xi\)-invariant harmonic function with respect to \(g_F\) and \(\nabla^{1, 0} h = (1/\beta_1) z_1 \partial_{z_1} - (1/\beta_2) z_2 \partial_{z_2} \).

\begin{remark}
	The space of homogeneous harmonic functions on \(\mathbb{C}_{\beta_1} \times \mathbb{C}_{\beta_2}\) with sub-quadratic growth is explicitly identified in \cite[Proposition 3.4]{dBEsch} by using separation of variables. On the other hand, after the first version of this paper appeared, Donladson's Schauder estimates were extended to the normal crossing situation by Guo-Song \cite{GuoSong}. Using Guo-Song's estimates we could avoid introducing a weight function at the double points of the  arrangement and instead deal with them in the same way as we do with points in \(L_j^{\times}\) (i.e. smooth points of the arrangement).
\end{remark}

We introduce the subset \( \mathcal{H} \subset \mathcal{I} \) of homogeneous pluri-harmonic functions
\begin{equation*} 
\mathcal{H} = \{ \delta : \hspace{2mm} \mbox{there is a non-zero \(\delta\)-homogeneous function \(u\) with } \dd u = 0 \} .
\end{equation*}
For any \(u \in \mathcal{H} \) there is a corresponding \(v\) such that \( u + i v \) is holomorphic \(\delta\)-homogeneous, so we refer to \(\mathcal{H}\) as the holomorphic spectrum. It follows from Hartog's theorem that if \(\delta \in \mathcal{H}\) then \(\delta \geq 0 \). In the case of PK cones the holomorphic spectrum is explicit; indeed Liouville's theorem implies that any \(\delta\)-homogeneous holomorphic function must be a polynomial of the complex coordinates \(z, w\). In the regular case we must have \( \mathcal{H} = \{ m/\gamma : \hspace{2mm} m \in \mathbb{N}_0 \} \) and in the case of \( \mathbb{C}_{\beta_1} \times \mathbb{C}_{\beta_2} \) we get \( \mathcal{H} = \{ m/\beta_1 + n/\beta_2 : \hspace{2mm} m, n \in \mathbb{N}_0 \} \).

\subsection{Main result}

Let \( \beta = \max \beta_j \) and take \( 0 < \alpha < \beta^{-1} -1 \). Write \( \Delta \) for the Laplace operator of \(g_{ref}\). For each ball \( B \subset X' \) we have Donaldson's interior Schauder estimates
\begin{equation} \label{DiSe}
	 \| u \|_{2, \alpha; \frac{1}{2}B} \leq c \left( \| \Delta u \|_{\alpha; B} + \| u \|_{C^0(B)} \right).
\end{equation}
The Sobolev inequality together with elliptic regularity imply that for any \( f \in C^{\alpha}_{loc}(X') \cap L^{\infty}(X) \) with \( \int_X f =0 \) there is a unique \( u \in C^{2, \alpha}_{loc}(X') \cap L^{\infty}(X) \) with \( \int_X u =0 \) such that \( \Delta u =f \).

Let \(\Phi_i\) be affine coordinates centred at the multiple points. We use the maps \(\Phi_i\) (see section \ref{reference metrics}) to identify a neighbourhood of a multiple point \(x_i\) with a ball \(B_i= \{ r < 1 \} \) in the corresponding PK cone \(g_{F_i}\), we can assume that \(3B_i\) are pairwise disjoint. As in section \ref{reference metrics} we let \(\chi\) be a smooth function on \(\mathbb{CP}^2\) with values in the interval \([0, 1]\) with \(\chi \equiv 0 \) on \( \overline{\cup_i B_i} \), \(\chi>0\) on \( \mathbb{CP}^2 \setminus \overline{\cup_i B_i} \)  and \(\chi \equiv 1 \) on \(\mathbb{CP}^2 \setminus \cup_i 2B_i\). Let \(N\) be the number of multiple points. For \( \mu = (\mu_1, \ldots, \mu_N) \in \mathbb{R}^N \) we define
\begin{equation} \label{final wh norm}
	\|u\|_{2, \alpha, \mu} = \|\chi u\|_{2, \alpha} + \sum_i \|(1-\chi)u \circ \Phi_i \|_{2, \alpha, \mu_i}
\end{equation}
The \(\|(1-\chi)u \circ \Phi_i \|_{2, \alpha, \mu_i}\) terms stand for the weighted H\"older norms on the PK cones introduced in \ref{wh}. 
The norm in Equation \eqref{final wh norm} defines the weighted H\"older space \(C^{2, \alpha}_{\mu}\). Similarly there is the space \(C^{\alpha}_{\mu}\). If \(\Delta\) denotes the Laplace operator of \(\omega_{ref}\) then \( \Delta : C^{2, \alpha}_{\mu} (X) \to C^{\alpha}_{\mu-2}(X) \) is a bounded linear operator where \(\mu -2 = (\mu_1 -2, \ldots, \mu_N -2) \). 

For each multiple point \(x_i\) we have \( g_{F_i} = dr^2 + r^2 \overline{g}_i \). Write \( \mathcal{E}_i = \{ 0= \lambda_0^{(i)} < \lambda_1^{(i)} < \ldots \} \) for the spectrum of the Laplacian of \(\overline{g}_i\). Then \( \mu^{(i)}_{j, \pm} = -1 \pm \sqrt{1 + \lambda^{(i)}_j} \) is  the homogeneity of the corresponding harmonic functions on \(g_{F_i}\) and \( \mathcal{D}_i = \{ \mu^{(i)}_{j, \pm} \}_{j=1}^{\infty} \) is the set of indicial roots. Define \( I_i : \mathbb{R} \to \mathbb{Z} \) as 
\begin{equation*}
I_i(\mu) = \begin{cases}
- \sum_{\delta \in (\mu, 0) \cap \mathcal{D}_i} m_i(\delta) \hspace{2mm} \mbox{if } \mu <0 \\
\hspace{3mm} \sum_{\delta \in [0, \mu] \cap \mathcal{D}_i} m_i(\delta) \hspace{2mm} \mbox{if } \mu >0 \\
\end{cases}
\end{equation*}
where \(m_i(\delta)\) denotes the dimension of the corresponding vector space of \(\delta\)-homogeneous harmonic functions. Set \( \mathcal{F} = (\mathbb{R} \setminus \mathcal{D}_1 ) \times \ldots \times (\mathbb{R} \setminus \mathcal{D}_N) \subset \mathbb{R}^N \). If \( \mu \in \mathcal{F} \) then \( \Delta: C^{2, \alpha}_{\mu} (X) \to C^{\alpha}_{\mu-2} \) is Fredholm. Moreover, the index  is locally constant on \(\mathcal{F}\) and \( \mbox{ind}(\Delta) = - \sum_{i} I_i (\mu_i) \). The proof of these facts follows as in  the standard conifold case (see \cite{Joyce}, \cite{Behrndt}) once we are provided with Donaldson's interior Schauder estimates (Equation \eqref{DiSe}) together with the weighted estimate given by Proposition \ref{estimate cone}.

Let \(\mathcal{P}_i\) be the space of \(\delta\)-homogeneous plurisubharmonic functions on \(g_{F_i}\) with \( \delta \in [0, 2] \). Note that \(\mathcal{P}_i\) is made up of constant and possibly also linear functions in \(\mathbb{C}^2\). Let \(\mu_i>0\) such that \( (2, 2+ \mu_i] \cap \mathcal{D}_i = \emptyset\) for all \(i\).  Consider the vector space  
\begin{equation} \label{T0M}
T_0\mathcal{M}= \{ u \in C^{2, \alpha}_{\mu +2}(X) \oplus_i (1-\chi)( \mathcal{P}_i \oplus  \mathbb{R} \cdot r_i^2 \oplus \mathcal{H}_i) \circ \Phi_i^{-1} \hspace{2mm} \mbox{with} \hspace{2mm} \int_X u =0  \}
\end{equation}
where \(\mathcal{H}_i=0\) if \(d_i \geq 3\) and \(\mathcal{H}_i = \mathbb{R} \cdot h_i \) with \( h_i = |z_1|^{2\beta_{i_1}} - |z_2|^{2\beta_{i_2}} \) if \(d_i=2\). Similarly we let
\begin{equation} \label{T0N}
T_0\mathcal{N}= \{ f = \overline{f} + \sum_{i} f_{x_i} \hspace{2mm} \mbox{with} \hspace{2mm} \overline{f} \in C^{\alpha}_{\mu}(X), f_{x_i} \in \mathbb{R},  \int_X f =0  \}.
\end{equation}
These spaces are finite dimensional extensions of the subspaces of functions in \(C^{2, \alpha}_{\mu}\) and \(C^{\alpha}_{\mu}\) respectively with zero average. We fix any norm on the finite dimensional factors so that they become Banach spaces.   
\begin{proposition} \label{linearthm}
	\(\Delta_{\omega} : T_0 \mathcal{M} \to T_0 \mathcal{N} \) is an isomorphism.
\end{proposition}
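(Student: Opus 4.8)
The plan is to prove the three facts that together give the statement: that $\Delta_{\omega}$ really maps $T_0\mathcal{M}$ into $T_0\mathcal{N}$, that it is injective, and that it is surjective; since both spaces are Banach and $\Delta_{\omega}$ is bounded, bijectivity and the open mapping theorem then finish the proof. Well-definedness is bookkeeping: $\Delta$ maps $C^{2,\alpha}_{\mu+2}(X)$ into $C^{\alpha}_{\mu}(X)$ as already recorded; the functions in $(1-\chi)\,\mathcal{P}_i\circ\Phi_i^{-1}$ and $(1-\chi)\,\mathcal{H}_i\circ\Phi_i^{-1}$ are harmonic where $1-\chi\equiv 1$, so $\Delta$ of them is a $C^{\alpha}$ function supported in the annulus $2B_i\setminus B_i$, hence in $C^{\alpha}_{\mu}(X)$; and $\Delta_{g_{F_i}}r_i^2$ is a positive constant $\kappa_i$, so $\Delta\big((1-\chi)\,r_i^2\circ\Phi_i^{-1}\big)$ equals $\kappa_i$ near $x_i$ modulo a term supported away from $x_i$, i.e. it lies in $\mathbb{R}\cdot\kappa_i\oplus C^{\alpha}_{\mu}(X)\subset T_0\mathcal{N}$. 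Finally $\int_X\Delta u=0$ for every $u\in T_0\mathcal{M}$ by integration by parts, the flux through small tubes around $\cup_j L_j$ and small geodesic spheres around the $x_i$ tending to zero because of the control on $u$ and $\nabla u$ coming from the conical and PK-cone structure.

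For injectivity, suppose $u\in T_0\mathcal{M}$ with $\Delta u=0$. Each summand of $u$ is bounded near the multiple points — constants and linear functions are locally bounded, $r_i^2\to 0$ and $h_i\to 0$, and the $C^{2,\alpha}_{\mu+2}$-part is $O(r_i^{\mu_i+2})$ with $\mu_i>0$ — so $u\in C^{2,\alpha}_{\mathrm{loc}}(X')\cap L^{\infty}(X)$ and $\int_X u=0$. The uniqueness in the solvability statement for $\Delta$ on $C^{2,\alpha}_{\mathrm{loc}}(X')\cap L^{\infty}(X)$ (the Sobolev inequality together with elliptic regularity), applied with right-hand side $0$, forces $u=0$.

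For surjectivity, take $f\in T_0\mathcal{N}$: then $f\in C^{\alpha}_{\mathrm{loc}}(X')\cap L^{\infty}(X)$, $\int_X f=0$, and near each $x_i$ one has $f=f_{x_i}+\bar f$ with $\bar f\in C^{\alpha}_{\mu_i}$. First solve globally: by the Sobolev inequality and elliptic regularity there is a unique $u_0\in C^{2,\alpha}_{\mathrm{loc}}(X')\cap L^{\infty}(X)$ with $\int_X u_0=0$ and $\Delta u_0=f$, and it remains to determine the asymptotics of $u_0$ at each $x_i$ — a purely local question on $\epsilon B_i$, where $\omega_{ref}$ is the PK cone $g_{F_i}$ and $\Delta_{g_{F_i}}u_0=f_{x_i}+\bar f$. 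Expanding $u_0(r,\cdot)$ in the $L^2$-eigenbasis of $\Delta_{\overline{g}_i}$ on the singular link $(S^3,\overline{g}_i)$ of section \ref{wh} and solving the resulting Bessel-type ODEs in $r$ (right-hand side bounded, solution bounded as $r\to 0$), one reads off $u_0=p_i+c_ir_i^2+\eta_i+\tilde v$ near $x_i$: the constant $f_{x_i}$ in the right-hand side forces $c_i=f_{x_i}/\kappa_i$; $p_i\in\mathcal{P}_i$ and $\eta_i\in\mathcal{H}_i$ collect the $\delta$-homogeneous harmonic contributions with $0\le\delta<\mu_i+2$; and the tail $\tilde v$, together with the contribution of the decaying part $\bar f$, is harmonic up to $C^{\alpha}_{\mu_i}$ and $O(r_i^{\mu_i+2})$, hence lies in $C^{2,\alpha}_{\mu_i+2}$ by the remark following Lemma \ref{Est 1}. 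Patching these descriptions with $\chi$ exhibits $u_0\in C^{2,\alpha}_{\mu+2}(X)\oplus_i(1-\chi)\big(\mathcal{P}_i\oplus\mathbb{R}\cdot r_i^2\oplus\mathcal{H}_i\big)\circ\Phi_i^{-1}$ with $\int_X u_0=0$, i.e. $u_0\in T_0\mathcal{M}$, and $\Delta u_0=f$.

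The delicate point is justifying the asymptotic expansion used above and identifying, for each PK cone $g_{F_i}$, the span of the $\delta$-homogeneous harmonic functions with $0\le\delta<\mu_i+2$ with exactly $\mathcal{P}_i\oplus\mathcal{H}_i$ (so that $\mathbb{R}\cdot r_i^2$, needed to absorb the constant $f_{x_i}$, and the decaying $C^{2,\alpha}_{\mu_i+2}$ part close up the picture). Here everything enters: $\delta=0$ gives constants; by Lemma \ref{hom harm lemma} any $\delta\in(0,2)$ satisfies $\delta\ge 1$ and its harmonic functions are pluriharmonic, hence linear and in $\mathcal{P}_i$; at $\delta=2$ the pluriharmonic part is again linear (in $\mathcal{P}_i$) while the $\xi$-invariant part is $0$ when $d_i\ge 3$ and is $\mathbb{R}\cdot h_i$ for the product cone $d_i=2$; and the non-resonance condition $(2,\mu_i+2]\cap\mathcal{D}_i=\emptyset$ forbids anything in between. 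I expect the main obstacle to be the analytic underpinning — convergence of the eigenfunction series, the passage from $L^2$ to pointwise and Hölder bounds through Donaldson's interior Schauder estimates, and the removable-singularity statement at the apex — together with the precise counting of the $2$-homogeneous harmonic functions via the Matsushima-type splitting recorded after Lemma \ref{hom harm lemma}; the global solve, injectivity, and well-definedness are comparatively routine.
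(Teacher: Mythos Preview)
Your argument is correct and ultimately rests on the same ingredient the paper invokes, namely Lemma \ref{hom harm lemma} and the classification of $\delta$-homogeneous harmonic functions with $0\le\delta\le 2$ on each PK cone. The paper's own proof is a one-line reference to the Fredholm package set up just before the proposition: $\Delta:C^{2,\alpha}_{\mu}(X)\to C^{\alpha}_{\mu-2}(X)$ is Fredholm with index $-\sum_i I_i(\mu_i)$, and as the weight is moved across indicial roots the change in (co)kernel is spanned by the corresponding homogeneous harmonic functions (this is the standard conifold picture, cf.\ \cite{heinsun}); adding exactly the pieces $\mathcal{P}_i\oplus\mathbb{R}\cdot r_i^2\oplus\mathcal{H}_i$ on the domain and the constants $f_{x_i}$ on the target then balances the index to zero with trivial kernel. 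Your route sidesteps the explicit index bookkeeping by first solving $\Delta u_0=f$ globally in $C^{2,\alpha}_{loc}(X')\cap L^{\infty}(X)$ and then reading off the asymptotics of $u_0$ at each $x_i$ by hand via the link eigenfunction expansion; this is exactly the content of the cokernel description, unpacked. What your version buys is self-containment (no appeal to the index-change formula), at the price of having to justify the eigenfunction expansion and the passage from $L^2$ to $C^{2,\alpha}_{\mu_i+2}$ for the tail $\tilde v$, which you correctly flag as the only genuinely analytic step; the paper absorbs that step into the reference to \cite{heinsun}.
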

This result is a straightforward consequence of the previous description of the co-kernel of \(\Delta_{\omega}\) in terms of homogeneous harmonic functions in a neighbourhood of the multiple points together with Lemma \ref{hom harm lemma} and the discussion after it. See third bullet in page 95 of \cite{heinsun}. 

\section{Yau's continuity path} \label{yau cont path sect}

\subsection{Initial metric}
For each multiple point \(x_i\) we let \(G_i\) the connected component of the transverse automorphism of the PK cone \(g_{F_i}\), that is the holomorphic automorphisms of \(\mathbb{C}^2\) that commute with the \(1\)-parameter group generated by the Reeb vector field and preserve the conical set. More explicitly the automorphisms in \(G_i\) are \(P_{i} (z_1, z_2) = (\lambda_{i, 1}z_1 , \lambda_{i, 2}z_2 ) \) with \(\lambda = (\lambda_{i, 1}, \lambda_{i, 2}) \) for some \(\lambda_{i, 1}, \lambda_{i, 2} >0 \) and \(\lambda_{i, 1} = \lambda_{i, 2}\) if \(d_i \geq 3\). We set \(\mathcal{M}\) to be the space of functions \(u\) of the form 
\begin{equation} \label{function in M}
  u = \overline{u} + (1-\chi)\sum_{i} \left( p_i + \frac{1}{2}(r_i^2 \circ P_i - r_i^2)  \right) \circ \Phi_i^{-1} 
\end{equation}
with \(\overline{u} \in C^{2, \alpha}_{\mu +2} \), \(P_i \in G_i \), \(p_i \in \mathcal{P}_i\), \( \omega + i \dd u >0 \) and \( \int_X u =0 \). We also let
\begin{equation*}
\mathcal{N}= \{ f = \overline{f} + \sum_{i} f_{x_i} \hspace{2mm} \mbox{with} \hspace{2mm} f \in C^{\alpha}_{\mu}(X), f_{x_i} \in \mathbb{R},  \int_X (e^f-1) =0  \} .
\end{equation*}
The Monge-Amp\`ere operator is
\begin{equation*}
M(u) = \log \frac{(\omega + i \dd u)^2}{\omega^2} .
\end{equation*}
The facts are that \( \mathcal{M} \) and \(\mathcal{N}\) are Banach manifolds and \( M: \mathcal{M} \to \mathcal{N} \) is a \(C^1\) map. The tangent spaces at the origin are given by equations \eqref{T0M} and \eqref{T0N} respectively. We have \(M(0)=0\) and \( \delta M|_0 = \Delta_{\omega} \).

Write \(\omega^2 = e^{-f}\Omega \wedge \overline{\Omega} \). We have normalized so that \(\int_X \omega^2 = \int_X \Omega \wedge \overline{\Omega} \), hence \(\int_X (e^f -1)\omega^2 =0 \). It follows from Lemma \ref{ricci lemma} that the Ricci potential \(f = \overline{f} + \sum_i f_{x_i} \) of the reference metric \(\omega=\omega_{ref} \) belongs to \(\mathcal{N}\), indeed in a neighbourhood of \(x_i\) we have that \(\overline{f} = h_i\) is a pluri-harmonic function vanishing at the origin so we can take any \(0< \mu <1\). Decreasing the H\"older exponent and the weight in \(C^{\alpha}_{\mu}\) we might assume that \(\overline{f}\) can be approximated to arbitrary order by \(h \in C^{\infty}_c (X') \), that is for any \(\epsilon>0\) there is a function \(h\) smooth in complex coordinates vanishing in a neighbourhood of the multiple points such that \( \|\overline{f} - h \|_{C^{\alpha}_{\mu}(X)} < \epsilon \). 

Let \(f_0 = h + \sum_i f_{x_i} \). We take \(\epsilon >0\) small and use Proposition \ref{linearthm} together with the implicit function theorem to solve for \( (\omega + i \dd \phi_0)^2 = c e^{f-f_0}\omega^2 \) with \(\phi_0 \in \mathcal{M} \) and the constant \(c>0\) is determined by integration over \(X\) and lies in \( e^{-\epsilon} \leq c \leq e^{\epsilon} \). 
\begin{lemma} \label{initial metric lemma}
The initial metric \( \omega_0 = \omega + i \dd \phi_0 \) has the following properties:
	\begin{itemize}
		\item \(\omega_0^2 =  c e^{-f_0} \Omega \wedge \overline{\Omega} \).
		\item For each \(i\) we have \( \| \Phi_{i, 0}^{*} \omega_0 - \omega_{F_i} \|_{\alpha, \mu} \leq C \) with \( \Phi_{i, 0} = \Phi_i \circ P_0 \) for some \(P_0 \in G_i\).
		\item \( C^{-1} \omega_{ref} \leq \omega_0 \leq C \omega_{ref} \).
	\end{itemize}
\end{lemma}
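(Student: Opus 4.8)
The plan is to take the three assertions in order; the first is purely formal, the second is an unwinding of the definition of $\mathcal{M}$, and the third is the substantive one. For the volume identity, recall that just above we wrote $\omega^2 = e^{-f}\,\Omega\wedge\overline{\Omega}$ for $\omega = \omega_{ref}$, and that $\phi_0\in\mathcal{M}$ was constructed so that $(\omega + i\dd\phi_0)^2 = c\,e^{f-f_0}\,\omega^2$. Multiplying these two gives at once
\begin{equation*}
\omega_0^2 = c\,e^{f-f_0}\,e^{-f}\,\Omega\wedge\overline{\Omega} = c\,e^{-f_0}\,\Omega\wedge\overline{\Omega},
\end{equation*}
which is the first bullet. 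Moreover $f - f_0 = \overline{f} - h$, so $|f-f_0|<\epsilon$ pointwise, and $c\in[e^{-\epsilon},e^{\epsilon}]$; hence the same identity yields the cruder estimate $e^{-2\epsilon}\,\omega_{ref}^2 \le \omega_0^2 \le e^{2\epsilon}\,\omega_{ref}^2$, which I will use for the third bullet.

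For the asymptotics near $x_i$, unwind the definition \ref{function in M} of $\mathcal{M}$: $\phi_0 = \overline{u} + (1-\chi)\sum_j\bigl(p_j + \tfrac12(r_j^2\circ P_j - r_j^2)\bigr)\circ\Phi_j^{-1}$ with $\overline{u}\in C^{2,\alpha}_{\mu+2}$, $P_j\in G_j$ and $p_j\in\mathcal{P}_j$. In a neighbourhood of $x_i$ we have $\chi\equiv 0$, the terms with $j\ne i$ are supported elsewhere, and $\Phi_i^{*}\omega_{ref} = \omega_{F_i} = \tfrac{i}{2}\dd r_i^2$; since $p_i$ is constant or linear, $i\dd p_i = 0$, the $r_i^2$ contributions cancel, and using that $P_i$ is holomorphic,
\begin{equation*}
\Phi_i^{*}\omega_0 = \tfrac{i}{2}\dd(r_i^2\circ P_i) + i\dd(\overline{u}\circ\Phi_i) = P_i^{*}\omega_{F_i} + i\dd(\overline{u}\circ\Phi_i).
\end{equation*}
Setting $P_0 = P_i^{-1}\in G_i$ and $\Phi_{i,0} = \Phi_i\circ P_0$ gives $\Phi_{i,0}^{*}\omega_0 = \omega_{F_i} + i\dd(\overline{u}\circ\Phi_i\circ P_0)$. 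Since $\overline{u}\in C^{2,\alpha}_{\mu+2}$ and precomposition with the fixed transverse automorphism $P_0$ preserves the weighted H\"older spaces of the cone up to a uniform constant, $\|\Phi_{i,0}^{*}\omega_0 - \omega_{F_i}\|_{\alpha,\mu} = \|i\dd(\overline{u}\circ\Phi_i\circ P_0)\|_{\alpha,\mu}\le C$, which is the second bullet; in particular $|\Phi_{i,0}^{*}\omega_0 - \omega_{F_i}|_{\omega_{F_i}} = O(r_i^{\mu_i})$ vanishes at the apex.

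For the two-sided bound it suffices to establish the upper estimate $\omega_0\le C\,\omega_{ref}$, since the volume estimate above then forces the lower one: writing $\lambda_1\le\lambda_2$ for the eigenvalues of $\omega_0$ relative to $\omega_{ref}$ at a point, one has $\lambda_2\le C$ while $\lambda_1\lambda_2 = \omega_0^2/\omega_{ref}^2\ge e^{-2\epsilon}$, so $\lambda_1\ge e^{-2\epsilon}/C$. I prove the upper estimate locally. On a small enough ball around $x_i$, the asymptotics just obtained give $\Phi_{i,0}^{*}\omega_0\le 2\,\omega_{F_i}$; pulling back by $\Phi_{i,0}^{-1}$ and noting that $(\Phi_{i,0}^{-1})^{*}\omega_{F_i} = (\Phi_i^{-1})^{*}(P_i^{*}\omega_{F_i})$ is uniformly comparable to $\omega_{ref} = (\Phi_i^{-1})^{*}\omega_{F_i}$ — indeed $P_i^{*}\omega_{F_i}$ differs from $\omega_{F_i}$ only by the fixed transverse automorphism $P_i$, a scaling by $\lambda^{2\gamma_i}$ in the regular case and a factor-wise scaling in the product case — we get $\omega_0\le C\,\omega_{ref}$ near $x_i$. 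On the complementary compact set $K\subset X' = X\setminus\cup_i\{x_i\}$ one has $\phi_0\in C^{2,\alpha}(K)$ (immediate from the definition of $\mathcal{M}$, each summand being $C^{2,\alpha}$ away from the $x_i$), so $i\dd\phi_0$ is a bounded $(1,1)$-form there relative to $\omega_{ref}$ and hence $\omega_0 = \omega_{ref}+i\dd\phi_0\le C\,\omega_{ref}$ on $K$ as well.

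The only step requiring genuine care rather than bookkeeping is this last upper estimate in a neighbourhood of each multiple point: one must pass from the infinitesimal closeness of $\Phi_{i,0}^{*}\omega_0$ to $\omega_{F_i}$ recorded by the weighted H\"older norm to an honest two-sided comparison of metrics, and check that the distortion introduced by the transverse automorphism $P_0$ is controlled uniformly. Everything else is either a one-line computation or a standard consequence of Donaldson's interior Schauder theory on the compact piece $X\setminus\cup_i B_i$.
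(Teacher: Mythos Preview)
Your proof is correct and is essentially a fleshed-out version of the paper's one-line justification (``the first item follows by construction and the second and third items are an immediate consequence of the expression for $\phi_0$''). Your volume-plus-upper-eigenvalue argument for the third bullet is a clean way to make the two-sided bound explicit, and your care in tracking how the transverse automorphism $P_0$ interacts with the weighted norms and with $\omega_{F_i}$ fills in exactly the points the paper leaves to the reader.
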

The first item follows by construction and the second and third items are an immediate consequence of the expression for \(\phi_0\) in Equation \eqref{function in M}. The function \(f_0\) is the Ricci potential of \(\omega_0\), that is \( \mbox{Ric}(\omega_0)= i \dd f_0 \). We already mentioned that \(f_0\) is smooth in complex coordinates and is constant \(f_0 \equiv f_{x_i} \) in a neighbourhood of each multiple point, in particular this implies that \( \|\mbox{Ric}(\omega_0)\|_{\omega_0} \leq C \). The property of \(\omega_0\) having uniformly bounded Ricci curvature is its main advantage over the reference metric \(\omega_{ref}\).

\subsection{The a priori estimates}
We set \( \omega_0\) to be the initial metric in Lemma \ref{initial metric lemma}. We change \(f_0\) to \(f_0 -\log c \) so that \(\omega_0^2 = e^{-f_0} \Omega \wedge \overline{\Omega}\). Our goal is to show that there is \(u \in \mathcal{M} \) such that \(\omega_u^2 = e^{f_0} \omega_0^2  \) where \(\omega_u = \omega_0 + i \dd u \). We use Yau's continuity path and introduce the set 
\begin{equation} \label{cont path}
	T = \{ t \in [0, 1] \hspace{1mm} \mbox{:} \hspace{1mm} \mbox{there is } u_t \in \mathcal{M} \hspace{2mm} \mbox{with} \hspace{2mm} \omega_{u_t}^2 = c_t e^{tf_0} \omega_0^2  \} .
\end{equation}
We use the simplified notation \(\omega_t = \omega_{u_t} \).

Write \(u_t \in \mathcal{M}\) as in Equation \eqref{function in M}
\begin{equation} 
u_t = \overline{u}_t + (1-\chi)\sum_{i} \left( p_i(t) + \frac{1}{2}(r_i^2 \circ P_i(t) - r_i^2)  \right) \circ \Phi_i^{-1} 
\end{equation}
with \(\overline{u}_t \in C^{2, \alpha}_{\mu +2} \), \(P_i(t) \in G_i \) and \(p_i(t) \in \mathcal{P}_i\). We have 
\(P_i(t) (z_1, z_2) = (\lambda_{i, 1}(t)z_1 , \lambda_{i, 2}(t)z_2 ) \) for some \(\lambda_{i, 1} (t), \lambda_{i, 2}(t) >0 \). The main result of this section is the following:

\begin{proposition} \label{apriori}
	There is \(C >0 \) independent of \(t \in T\) such that \( \|\overline{u}_t\|_{2, \alpha, 2+ \mu} \leq C \), \( C^{-1} \leq \lambda_{i, 1} (t), \lambda_{i, 2}(t) \leq C \) and \( \|p_i(t) \| \leq C \) where the last estimate is with respect to any fixed norm in the finite dimensional vector space \(\mathcal{P}_i\).
\end{proposition}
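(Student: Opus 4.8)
The plan is to run Yau's continuity-method scheme — a uniform $C^0$ bound, then the crucial $C^2$ (Laplacian) estimate, then Schauder bootstrapping — in a form adapted to the present conifold-type geometry, where beyond controlling the decaying part $\overline{u}_t$ one must simultaneously pin down the non-decaying model data $P_i(t)$ and $p_i(t)$ near the multiple points. I will use throughout that, since $\omega_{u_t}^2 = c_t e^{t f_0}\omega_0^2$ with $f_0$ smooth in complex coordinates and constant near each $x_i$, one has $\mathrm{Ric}(\omega_t) = (1-t)\,i\dd f_0 = (1-t)\,\mathrm{Ric}(\omega_0)$, hence the uniform lower bound $\mathrm{Ric}(\omega_t) \ge -C\,\omega_{ref}$ via Lemma~\ref{initial metric lemma}; and that the $c_t$ are uniformly bounded since $\int_X\omega_t^2 = \int_X\omega_0^2$ is fixed while $f_0\in L^\infty$. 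For the $C^0$ bound: each $u\in\mathcal{M}$ is a bounded continuous function on $X$ with $\int_X u = 0$ and with $u/r_i\in L^2$ near each $x_i$, so the Sobolev inequality~\ref{Sob ineq} applies and the usual Moser iteration for the complex Monge--Amp\`ere equation (as in \cite{Yau}, and its conical and conifold variants \cite{JMR}, \cite{heinsun}) yields $\|u_t\|_{C^0(X)} \le C$ uniformly in $t\in T$.

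The heart of the argument is the $C^2$ estimate, namely the two-sided bound $C^{-1}\omega_{ref} \le \omega_t \le C\,\omega_{ref}$. Writing $\omega_t = \omega_{ref} + i\dd\varphi_t$ with $\varphi_t = \phi_0 + u_t$ (so $\|\varphi_t\|_{C^0}\le C$), I would apply the Chern--Lu inequality to the identity map $(X',\omega_t)\to(X',\omega_{ref})$: the lower Ricci bound for $\omega_t$ and the upper bound $\mathrm{Bisec}(\omega_{ref})\le C$ from Lemma~\ref{ref met lemma} give $\Delta_{\omega_t}\log\mathrm{tr}_{\omega_t}\omega_{ref} \ge -C_1 - C_2\,\mathrm{tr}_{\omega_t}\omega_{ref}$, and then the maximum principle applied to $H = \log\mathrm{tr}_{\omega_t}\omega_{ref} - A\varphi_t$ for $A$ large, together with $\Delta_{\omega_t}\varphi_t = 2 - \mathrm{tr}_{\omega_t}\omega_{ref}$ and the $C^0$ bound, yields $\mathrm{tr}_{\omega_t}\omega_{ref}\le C$; combined with $\omega_t^2/\omega_{ref}^2\in[C^{-1},C]$ from the Monge--Amp\`ere equation and Lemma~\ref{initial metric lemma}, the eigenvalue inequalities in complex dimension $2$ give the metric equivalence. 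Granted this, the bounds $\omega_t \le C\omega_{ref}$ and $\omega_{ref}\le C\omega_t$ read near $x_i$ — where $\omega_{ref} = \omega_{F_i}$ and $\omega_t$ is asymptotic to the rescaling of $g_{F_i}$ by $P_i(t)$ — force $\lambda_{i,1}(t)^{2\beta_{i_1}}, \lambda_{i,2}(t)^{2\beta_{i_2}}\in[C^{-1},C]$ (respectively $\lambda_{i,1}(t)^{2\gamma_i}\in[C^{-1},C]$ when $d_i\ge 3$), i.e.\ $C^{-1}\le\lambda_{i,1}(t),\lambda_{i,2}(t)\le C$. I expect the main obstacle to lie exactly here: making the Chern--Lu maximum-principle argument rigorous across the conical locus $L_j^\times$ (using Donaldson's conical theory and cone coordinates, where the relevant quantities stay bounded) and, more delicately, at the multiple points, where $\omega_{ref}$ is only a singular polyhedral cone, so one must verify that $\mathrm{tr}_{\omega_t}\omega_{ref}$ extends continuously to $x_i$ and that a maximum attained there can still be handled, along the lines of the conifold case of \cite{heinsun}.

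With uniform ellipticity in hand, Donaldson's interior Schauder estimates~\ref{DiSe} — applied, after bootstrapping, to $\Delta_{\omega_t} u_t = 2 - \mathrm{tr}_{\omega_t}\omega_0$ and using that $f_0$ is H\"older — give uniform $C^{2,\alpha}$ bounds on compact subsets of $X'$. Near each $x_i$, I would subtract off the now-controlled model $p_i(t) + \tfrac12(r_i^2\circ P_i(t) - r_i^2)$: since $f_0$ is constant and $P_i(t)^*g_{F_i}$ is Ricci-flat there, $\overline{u}_t$ satisfies a Monge--Amp\`ere equation which is a decaying perturbation of a flat one, with right-hand side uniformly bounded in $C^\alpha_{\mu}$. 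Rescaling by $D_\lambda$ to unit scale and invoking the weighted Schauder estimate of Proposition~\ref{estimate cone} — legitimate because $(2, 2+\mu]\cap\mathcal{D}_i = \emptyset$, so no indicial root obstructs the decay rate — together with Proposition~\ref{linearthm}, yields $\|\overline{u}_t\|_{2,\alpha,2+\mu}\le C$; finally the constant part of $p_i(t)$ is controlled by the $C^0$ estimate and the normalization $\int_X u_t = 0$, and its linear part by the expansion coefficients just bounded, so $\|p_i(t)\|\le C$, completing the proof.
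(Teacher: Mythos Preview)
Your outline tracks the paper's proof closely through the $C^0$ and $C^2$ steps, and your derivation of the bounds on $\lambda_{i,k}(t)$ from the metric equivalence is exactly right. The maximum-principle issue you flag is handled more simply than you fear: near $x_i$ both $\omega_t$ and $\omega_{ref}$ are asymptotic to (rescalings of) the same PK cone, so $H\to\log 2$ there and the maximum is attained in $X'$; along $L_j^\times$ one uses Jeffres' barrier trick. Also, the bound on $\|p_i(t)\|$ comes earlier than you place it: once you have the $C^0$ and $C^2$ estimates you have a uniform gradient bound on $u_t$, and since $\overline{u}_t$ and its gradient vanish at $x_i$ while $\mathcal{P}_i$ consists of constants and linear functions, $\|p_i(t)\|\le C$ follows immediately.

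The real gap is in your passage from local $C^{2,\alpha}$ control to the weighted bound $\|\overline{u}_t\|_{2,\alpha,2+\mu}\le C$. You assert that after subtracting the model data ``$\overline{u}_t$ satisfies a Monge--Amp\`ere equation \ldots\ with right-hand side uniformly bounded in $C^\alpha_\mu$'', but this is not yet available: writing the equation as $\Delta_{\omega_0}\overline{u}_t = (e^{\overline{f}_t}-1) + (\dd\overline{u}_t\wedge\dd\overline{u}_t)/\omega_0^2$, the quadratic term is only known to be in $C^\alpha_\mu$ once you already have a weighted decay estimate on $\dd\overline{u}_t$, which is precisely what you are trying to prove. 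Proposition~\ref{estimate cone} is a linear estimate and does not by itself close this loop. The paper inserts an intermediate step you are missing: a \emph{weighted} $C^0$ estimate $\|\overline{u}_t\|_{0,2+\mu'}\le C$ for some $0<\mu'<\mu$, obtained by a weighted Moser iteration in the spirit of Joyce (Theorem~8.6.6 in \cite{Joyce}), which crucially uses the already-established uniform bound on $|\dd\overline{u}_t|_{\omega_0}$. From this one gets $\|\dd\overline{u}_t\|_{\alpha,\mu'}\le C$ via scaled interior Schauder estimates, and then the quadratic term decays like $r^{2\mu'}$; choosing $2\mu'>\mu$ puts the right-hand side in $C^\alpha_\mu$ and Proposition~\ref{estimate cone} finishes. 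Without this weighted $C^0$ bridge your final step does not go through.
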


\begin{proof}
	There are five steps, the first three are the \(C^0\), \(C^2\) and \(C^{2, \alpha}\) estimates which together give us the point-wise bound \( \|u_t\|_{2, \alpha} \leq C \). The last two steps are the weighted estimates \( \|\overline{u}_t\|_{0, 2+ \mu'} \leq C \) and finally \( \|\overline{u}_t\|_{2, \alpha, \mu+2} \). The details go as follows:
	\begin{itemize}
		\item \( \| u_t \|_0 \leq C \). This is a straightforward extension of the standard case of  smooth metrics on closed manifolds, the technique is Moser iteration. Take \(p>1\), multiply the Equation \(\omega_t^2 = c_t e^{tf_0}\omega_0^2 \) by \(u_t |u_t|^{p-2}\) and integrate by parts to obtain
		
		\begin{equation}\label{int C0}
		\int_{X} |\nabla u_t|^2 \omega_0^2 \leq p \int_{X} u_t |u_t|^{p-2} (1-c_te^{tf_0}) \omega_0^2 .
		\end{equation}
		The integration by parts is valid due to our mild singularities, see \cite{brendle}. The Sobolev inequality for the metric \(\omega_0\) (see Equation \eqref{Sob ineq}) together with Equation \eqref{int C0} give us the bound \(\|u_t\|_{L^2} \leq C \) together with \(\|u_t\|^p_{L^{2p}} \leq C p \| u_t \|^{p-1}_{L^{p-1}}\). The estimate  \(\|u_t\|_0 = \lim_{p\to \infty} \|u_t\|_{L^p} \) follows by induction.
		
		\item \(C^{-1} \omega_0  \leq \omega_t \leq C \omega_0 \). The reference metric \(\omega_{ref}\) is quasi-isometric to the initial metric \(\omega_0 = \omega_{ref} + i \dd \phi_0 \) and we will derive the equivalent estimate  $C^{-1} \leq \mbox{tr}_{\omega_t} (\omega_{ref}) \leq C$.
		Note that \(\omega_t = \omega_{ref}  + i \dd \tilde{u}_t \) with \(\tilde{u}_t = u_t + \phi_0 \) and the previous bullet gives us a bound on \(\|\tilde{u}_t\|_0\). Standard elliptic regularity theory implies that \(u_t\) is smooth on the complement of the conical set. We want to derive a uniform bound on the function $ H = \log \mbox{tr}_{\omega_t} (\omega_{ref}) - A \tilde{u}_t$ with \(A>0\) a uniform constant independent of \(t\).

		Recall that \(\mbox{Bisec}(\omega_{ref}) \leq C_1 \). On the other hand the equation in our continuity path implies that
		\begin{equation}
		\mbox{Ric}(\omega_t) = (1-t) \mbox{Ric}(\omega_0) .
		\end{equation}
		Since the initial metric \(\omega_0\) has uniformly bounded Ricci curvature we easily deduce that \(\mbox{Ric}(\omega_t) \geq - C_2 \omega_{ref} \). 
		The Chern-Lu inequality tells us that: 
		\begin{equation} \label{chern lu ineq}
		\triangle_{\omega_t} \log \mbox{tr}_{\omega_t} (\omega_{ref}) \geq - (C_1 + C_2) \mbox{tr}_{\omega_t} (\omega_{ref}) .
		\end{equation}
		
		We take \(A= C_1 + C_2 +1\). We want to show  that $H = \mbox{tr}_{\omega_t} (\omega_{ref}) - A \tilde{u}_t$ is uniformly bounded above. Since $H(y) \to \log 2$ as $y \to x_i$, we can assume that $H$ attains its global maximum away from the multiple points. Moreover, by Jeffres' barrier trick\footnote{I.e. replacing \(H\) by \(H + \sum_{j=1}^{n} |s_j|^{2\epsilon}_h\) with \(\epsilon>0\) sufficiently small
		and where \(h\) is a smooth Hermitian metric on \(\mathcal{O}(1)\) and \(s_j\) are defining sections for the complex lines \(L_j\).} (see \cite{Jeffres}) we can assume that the maximum of \(H\) is not attained in the conical set. At the point of maximum \(x\) we have 
		\begin{equation*}
			0 \geq \triangle_{\omega_t} H (x) \geq -Q \mbox{tr}_{\omega_t} (\omega_{ref}) - A\triangle_{\omega_t} \tilde{u}_t = \mbox{tr}_{\omega_t} (\omega_{ref}) (x) - 2A ,
		\end{equation*}
		where we have used Equation \eqref{chern lu ineq} together with the identity $2 = \mbox{tr}_{\omega_t} (\omega_{ref}) + \triangle_{\omega_t} \tilde{u}_t$.
		It is then easy to conclude that $\mbox{tr}_{\omega_t} (\omega_{ref}) \leq C$. 
		We also have \( \omega_t^2 = c_t e^{(t-1)f_0 +f} \omega_{ref}^2 \), therefore the determinants of the metrics are uniformly equivalent and hence we must also have $\mbox{tr}_{\omega_t} (\omega_{ref}) \geq C^{-1}$. 
		
		It is a direct consequence of \(C^{-1} \omega_0  \leq \omega_t \leq C \omega_0 \) that \( C^{-1} \leq \lambda_{i, 1} (t), \lambda_{i, 2}(t) \leq C \). On the other hand the \(C^0\) together with the \(C^2\)-estimate give us a uniform bound on the gradient of \(u_t\). Since the \(\mathcal{P}_i\) are made up of constant and linear functions and \(\overline{u}_t\) together with its gradient vanish at the multiple points, we conclude that \(\|p_i(t)\|\leq C\).
		
		\item \(\|u_t\|_{C^{2, \alpha}_{loc} (X')} \leq C\). More precisely, for any compact \(K \subset X' \) we have a uniform bound \([\dd u_t]_{\alpha, K} \leq C \) and for points \(p\) in a neighbourhood of \(x_i\) we have \([\dd u_t]_{\alpha, B(p, r_i(p)/2)} \leq C \). These estimates are an immediate consequence of the interior Schauder estimates for the Monge-Amp\`ere operator, see \cite{ChenWang}.
		
		\emph{Note:}
		Before proceeding with the weighted estimates  we remark that the automorphisms of the PK cones preserve the weighted function spaces we have introduced and, since the automorphisms remain bounded, the norms defined with respect to the identifications \( \Phi_i \circ P_i(t)^{-1} \) are uniformly equivalent. We also observe that if we replace \(u_t\) by \(\overline{u}_t\) we have \(\omega_{\overline{u}_t}^2 = e^{\overline{f}_t} \omega_0^2 \) with \(\overline{f}_t\) uniformly bounded.
		
		\item \( \|\overline{u}_t\|_{2+\mu', 0} \leq C \) where \(\mu'\) is any fixed tuple of positive numbers with \(0<\mu'<\mu\). This estimate follows from a weighted Moser iteration, the argument goes back to theorem 8.6.6 in \cite{Joyce}. We introduce a weight function \( 0 < \rho \leq 1\) on \(X'\) with \(\rho \equiv 1\) away from the multiple points and \(\rho= r_i \) in a neighbourhood of \(x_i\). We introduce the norm
		$$ \| \overline{u} \|_{L^p_{\delta}}^p = \int_{X} |\overline{u}|^p \rho^{-p\delta} \rho^{-4} \omega_0^2 . $$ 
		We let \(\delta = 2 + \mu' \) so that \(\overline{u}_t \in L^p_{\delta}\) for any \(p \geq 1\). The Sobolev inequality together with an integration by parts argument lead to \(\| \overline{u}_t\|^p_{L^{2p}_{\delta}} \leq Cp \left( \| \overline{u}_t\|^{p-1}_{L^{p-1}_{\delta}} + \|\overline{u}_t\|^p_{L^p_{\delta}} \right) \), as noticed by Joyce \cite{Joyce} the uniform bound \(\| \dd \overline{u}_t \|_{\omega_0} \leq C\) obtained before is necessary to derive the inequality. The estimate \( \|\overline{u}_t\|_{\delta, 0} = \lim_{p \to \infty} \|\overline{u}_t\|_{L^p_{\delta}} \leq C \) follows by induction as in the \(C^0\) case once we estimate \(\|\overline{u}\|_{L^p_{\delta}}\) for some \(p\). In order to obtain the initial integral estimate we note that the Sobolev inequality \eqref{Sob ineq2} give us a uniform bound \( \int_X \rho^{2 \alpha -4} |\overline{u}_t|^{\alpha} \leq C \) for any \( 1 < \alpha < 2\). We use the H\"older inequality \( ab \leq a^r/r + b^s/s \) with \( a = |\overline{u}_t|^p \rho^{-p\delta -1} \) and \( b = \rho^{-3} \), \(r = (4-2\alpha)/(1+ p \delta)^{-1} \), \(r^{-1} + s^{-1} =1 \). We take \( 1 < \alpha = pr < 3/2\) and conclude that \(\int_X |\overline{u}_t|^{p} \rho^{-p\delta-4} \leq C \).

		\item \( \|\overline{u}_t\|_{2, \alpha, \mu +2} \leq C \). Given the previous  \( \|\overline{u}_t\|_{\mu'+2, 0} \leq C \) bound, this is a non-linear analogue of Lemma \ref{Est 1}. The argument goes back to theorem 8.6.11 in \cite{Joyce}.
		We write the equation $\omega_{\overline{u}_t}^2 = e^{\overline{f}_t} \omega_0^2$  as 
		\begin{equation} \label{EQ2}
		\triangle_{\overline{u}/2} \overline{u} = H (e^{\overline{f}_t}-1), 
		\end{equation}
		where $\triangle_{\overline{u}/2}$ is the Laplace operator of $\omega_{\overline{u}/2} = \omega_0 + i \partial \overline{\partial} (\overline{u}/2)$ and $H = \omega_0^2 / \omega_{\overline{u}/2}^2$.
		Note that $\omega_{\overline{u}/2} = (1/2) \omega_0 + (1/2) \omega_{\overline{u}} \geq (1/2) \omega_0$. We have a bound on the $C^{2, \alpha}_{loc}(X')$ norm of  $\overline{u}$ which give us a \(C^{\alpha}\) bound on the coefficients of \(\triangle_{\overline{u}/2}\).  The interior Schauder estimates give us
		\begin{equation} \label{int1}
		\| \overline{u} \|_{C^{2, \alpha} (B(p, r(p)/2)))} \leq C \left( \| \triangle_{\overline{u}/2} \overline{u} \|_{C^{2, \alpha} (B(p, r(p))} + \| \overline{u} \|_{C^0(B(p, r(p)))} \right) , 
		\end{equation}
		with a constant $C$ independent of $p$. We multiply \eqref{int1} by $\rho(p)^{-\mu'}$ and use the previous \(C^0_{\delta}\)-estimate to obtain the bound \( \| \dd \overline{u}_t \|_{\alpha, \mu'} \leq C \).
		We improve the weight on the estimate by writing the equation $\omega_{\overline{u}_t}^2 = e^{\overline{f}_t} \omega_0^2$  as 
		\begin{equation} \label{EQ1}
		\triangle_{\omega_0} \overline{u}_t = (e^{\overline{f}_t} -1) + \psi , 
		\end{equation}
		with $\psi = (\dd \overline{u}_t \wedge \dd \overline{u}_t )/ \omega_0^2 $ and appealing to Proposition \ref{estimate cone}. 
		
	\end{itemize}
	 
\end{proof}

\emph{Proof of Theorem \ref{thm1}.} The set \(T\) is non-empty with \(u_0 \equiv 0\), it is open thanks to Proposition \ref{linearthm} and it is closed because of Proposition \ref{apriori}. We conclude that \(T = [0, 1] \), in particular \(1 \in T\) and we set \(\omega_{RF} = \omega_1\) with \(\Psi_i = \Phi_i \circ P_i (1)^{-1} \). 

The rate \(\mu\) at \(x_i\) can be taken to be any positive number \(0 < \mu < \min \{\lambda-2, 1\} \) where \(\lambda\) is the first growth rate bigger than two of a non-zero homogeneous harmonic function on the PK cone \(g_{F_i}\). The restriction that \(\mu < 1\) comes from the fact that the Ricci potential of the reference metric can be written as \(f = \overline{f} + \sum_i \overline{f}_{x_i} \) with \(\overline{f}_{x_i}\) constant and \(\overline{f} \in C^{\alpha}_{\delta} \) for some \(\delta \geq 1\).
\qed

\begin{remark}
	If a line arrangement \(\{L_j\}_{j=1}^n\) is invariant under a non-trivial \(\mathbb{C}^{*}\)-action then it is given by a collection of lines in \(\mathbb{C}^2\) going through the origin and possibly also the line at infinity. In any case there are at least \(n-1\) lines going through a point and there are no possible values of \(0<\beta_j<1\) that make \((\mathbb{CP}^2, \sum_{j=1}^{n}(1-\beta_j)L_j)\) a klt log CY pair, that is equations \eqref{cond1} and \eqref{cond3} are satisfied. We conclude that none of the arrangements considered in Theorem \ref{thm1} is invariant under a non-trivial \(\mathbb{C}^{*}\)-action. Following \cite{donaldson1}, we can appeal to Theorem \ref{linearthm} together with the implicit function theorem to show existence of K\"ahler-Einstein metrics for small perturbations of the cone angles. This matches with \cite{fujita}, since small perturbations which are log Fano (\(\sum_{j=1}^{n}(1-\beta_j)<3\)) are necessary log \(K\)-stable according to theorem 1.5 in \cite{fujita}.  Moreover, let us note that this also gives the first examples of singular (non-orbifold or log-smooth) KE metrics which are \emph{non-Ricci flat} for which a polynomial decay of the metric at the tangent cones is obtained.
\end{remark}

\section{Chern-Weil integrals} \label{chernweilsect}

\subsection{Energy formula}
Recall that the energy of a Riemannian manifold \((M, g)\) is defined as
\begin{equation*}
E(g) = \frac{1}{8\pi^2} \int_M | \mbox{Riem}(g)|^2 dV_g .
\end{equation*}
This is a scale invariant quantity in four real dimensions. If \(g\) is a smooth Ricci-flat metric on a closed \(4\)-manifold then its energy is equal to the Euler characteristic of the manifold, that is \( E(g) = \chi(M)\). In this section we discuss a Chern-Weil type formula for the energy of the metric \(g_{RF}\) in Theorem \ref{thm1}.

It follows from the polyhomogeneous expansion that the energy distribution \(| \mbox{Riem}(g_{RF})|^2 \) is locally integrable at points of \(L_j^{\times}\). Indeed the polyhomogeneous expansion implies that \( |\mbox{Riem}(g_{RF})| = O(\rho^{1/\beta_j -2}) \) and the local integrability follows by comparison with \(\int_{0}^{1} \rho^{2/\beta_j -3} d\rho < + \infty \). At multiple points of the arrangement we have the next.

\begin{conjecture}\label{conj:E}
	In the setting of Theorem \ref{thm1} we expect that the following assertions hold.
	\begin{enumerate}
		\item The energy distribution \(|\mbox{Riem}(g_{RF})|^2\) is locally integrable at the multiple points of the arrangement.
		\item The energy of \(g_{RF}\) is given by:
		\begin{equation} \label{energy formula}
		E(g_{RF}) = 3 + \sum_{j=1}^{n} (\beta_j -1) \chi (L_j^{\times}) + \sum_{i}(\nu_i -1),
		\end{equation}
		where $\nu_i$ denotes the volume density of the Ricci-Flat metric at the multiple points $x_i$.
	\end{enumerate}
\end{conjecture}

The right hand side of Equation \eqref{energy formula} can be interpreted as a `logarithmic' Euler characteristic where points are weighted by the volume density of the metric, similarly to the Gauss-Bonnet formula for conical metrics on Riemann surfaces
\[\frac{1}{2\pi} \int_X K_g dV_g = \chi(X) + \sum_i (\beta_i-1) \] 
where \(X\) is a compact surface endowed with a metric \(g\) with cone angles \(2\pi\beta_i\) at a finite number of points \(x_i\) and \(K_g, dV_g\) denote its Gaussian curvature and area form.
In the context of Theorem \ref{thm1} the volume density at a point \(p\) is equal to \(1\) if \( p \in \mathbb{CP}^2 \setminus \cup_j L_j \), to \(\beta_j\) if \(p \in L_j^{\times} \) and to \(\nu_i\) if \(p = x_i \). Since the Euler characteristic is additive, or `motivic', under disjoint union, we can rewrite Equation \eqref{energy formula} as \(	E(g_{RF}) = \chi (\mathbb{CP}^2 \setminus \cup_j L_j ) + \sum_{j=1}^{n} \beta_j \chi(L_j^{\times}) + \sum_i \nu_i \chi (\{x_i\})\).

\subsection*{Explanation of Equation \eqref{energy formula}}
Let \( \nabla \) be the Levi-Civita connection of \(g_{RF}\) and \(F_{\nabla}\) be its curvature. The metric is anti self-dual and we have the point-wise identity  \( |\mbox{Riem}(g)|^2 dV_g=- \mbox{tr}(F_{\nabla} \wedge F_{\nabla}) \). The energy is given by the Chern-Weil integral
\begin{equation*}
E = -\frac{1}{8\pi^2} \int_{\mathbb{CP}^2} \mbox{tr}(F_{\nabla} \wedge F_{\nabla}) .
\end{equation*}

We remove small balls around the multiple points and let \( Z = \mathbb{CP}^2 \setminus \cup_i B(x_i, \epsilon) \). Under suitable assumptions, it is reasonable to expect that the following formula holds
\begin{equation}\label{Econj1} 
E (g|_{Z}) = \chi(Z) + \sum_{j=1}^{n} (\beta_j -1) \chi (L_j^{\times}) + \sum_i \frac{1}{2\pi^2} \int_{\partial B(x_i, \epsilon) } \ \left( det (II) + \langle II, \hat{\mathcal{R}} \rangle \right),
\end{equation}
where \(II\) denotes the second fundamental form of \(\partial B(x_i, \epsilon) \subset Z \) and $\hat{\mathcal{R}}$ is the restriction of the ambient curvature operator thought as a symmetric two tensor by means of the three dimensional Hodge star operator. The above formula for \(E (g|_{Z})\) results from a mix of the case of smooth metrics on compact manifolds with boundary (see equation 5.16 in \cite{AL}) with the case of metrics with conical singularities along a smooth complex curve in a closed surface (see \cite{SW}).

The boundary contribution corresponds to the Chern-Simons integral  
$$ CS (\nabla, Y) = \frac{1}{2 \pi^2}\int_{Y} dA \wedge A + \frac{2}{3}A\wedge A \wedge A$$
with \( \nabla = d + A \)  and \( Y = \partial B(x_i, \epsilon) \). On the other hand, it is easy to see that for a \(PK\) cone \(g_F= dr^2 + r^2 \overline{g} \) we have
\begin{equation*} 
CS (\nabla_{g_F}, \partial B(0, r)) = \mbox{Vol}(\overline{g})/2\pi^2 = \nu
\end{equation*} 
for any \( r>0\). Indeed $\hat{\mathcal{R}}_{g_F} \equiv 0$ and \(\det(II_{g_F}) \equiv 1/r^3\).

Equation \eqref{energy formula} follows if we show that  we can replace the boundary integral of \(g\) with that of \(g_F\) in the limit as \( \epsilon \to 0 \). Our proof of Theorem \ref{thm1} give us control on the decay of the K\"ahler potential \(\phi\) of \(g_{RF}\) up to second order derivatives \(\dd \phi \). At this point we require further control on derivatives of \(\phi\) up to fourth order.
Special care must be taken with derivatives in transverse directions to the conical set. 
We \emph{conjecture} that it is possible to upgrade the asymptotic given by Theorem \ref{thm1} to the following: 
\begin{enumerate}
	\item[(i)] Taking derivative with respect to the vector field \(\partial_r\) (which is tangent to the conical set) we can upgrade Equation \eqref{asymptotics} to 
	\begin{equation*} 
	| \nabla_{\partial_r} (g - g_F) |_{g_F} = O (r^{\mu-1}) .
	\end{equation*}
	We get that \( \int_{\partial B(0, \epsilon)} | \det (II_g) - \det (II_{g_F}) | = O (\epsilon^{\mu}) \).
	\item[(ii)] We expect that \( \int_{\partial B(0, \epsilon)}
	\langle II_g, \hat{\mathcal{R}_g} \rangle = O(\epsilon^{\mu}) \). The idea is that the curvature only involves mixed complex derivatives that one could in principle estimate via Donaldson's interior Schauder estimates together a bootstrapping argument.
\end{enumerate}
If the previous two bullets are established then we can evaluate the terms \(\mbox{det}(II)\) and \(\langle II, \hat{\mathcal{R}} \rangle\) in Equation \eqref{Econj1} as \(\epsilon \to 0\) by replacing them with the ones of the corresponding PK cone. To sum up, if items \((i)\) and \((ii)\) are proved then our Conjecture \ref{conj:E} follows.

\subsection*{Ordinary double points}
We consider the generic case, that is \(d_i =2\) for all \(i\). We have
\begin{equation*}
| \cup_i \{x_i\} | = \binom{n}{2} , \hspace{2mm} \chi (L_j^{\times}) = 2 - (n-1) .
\end{equation*}  
The second term in Equation \eqref{energy formula} is \( (3-n) \sum_{j=1}^{n} (\beta_j -1) = 3(n-3) \). On the other hand
\begin{equation*}
\sum_i \nu_i = \sum_i \beta_{i_1} \beta_{i_2} = \frac{1}{2} \sum_{j, k, j \neq k}^n \beta_j \beta_k .
\end{equation*} 
Since \( \sum_{k=1, k \neq j}^{n} \beta_k = n-3-\beta_j \), we get 
\begin{equation*}
\sum_{j, k, j \neq k}^n \beta_j \beta_k = (n-3)\sum_{j=1}^{n}\beta_j - \sum_{j=1}^{n} \beta_j^2 = (n-3)^2 - \sum_{j=1}^{n} \beta_j^2  . 
\end{equation*}
It is easy to check that \( \sum_{j=1}^{n}(1-\beta_j)^2 = (\sum_{j=1}^{n} \beta_j^2) -n +6 \). A simple computation  shows that
\begin{equation} \label{odp energy}
E = \frac{3}{2} - \frac{1}{2}\sum_{j=1}^{n}(1-\beta_j)^2 .
\end{equation}

This matches with Tian's formula (equation 2.6 in \cite{Tian}) 
\begin{equation} \label{Tian formula}
\begin{split}
c_2 (M, g) = c_2(M) + \sum_{j=1}^{n} (1-\beta_j) (K_M \cdot D_j + D_j^2) \\ 
+ \frac{1}{2} \sum_{\substack{j, k \\ j\neq k}}^{n} (1-\beta_j)(1-\beta_k) D_j \cdot D_k - \sum_{j=1}^{n}(1-\beta_j^2) \mbox{Sing}(D_j)
\end{split}
\end{equation}
In our case \(M = \mathbb{CP}^2 \) and \(D_j = L_j\). We identify \(H^4_{dR}(M) \cong \mathbb{R} \) via integration, \(\mbox{Sing}(D_j) = 0 \) since the singular locus of \(D_j\) is empty and the adjunction formula gives us  \( K_M \cdot D_j + D_j^2 = -\chi(D_j) = -2 \). It is then easy to check that \(c_2(M, g)\) given by Equation \eqref{Tian formula} is equal to \(E\) given by Equation \eqref{odp energy}.

Write \(t_j = 1- \beta_j\), so \(\sum_{j=1}^{n} t_j = 3 \) and \( 0 < t_j < 1 \). Since \(t_j^2 < t_j\) for each \(j\) we have \(2 E = 3 - \sum_{j=1}^{n}t_j^2 >0 \). The energy is a concave function of \(t_1, \ldots, t_n\) and attains its maximum when \(t_j = 3/n\) for all \(j\), its infimum is \(0\) and it's not attained. Note that the value \(\beta=\frac{n-3}{n}\) is the Calabi-Yau threshold for hypersurfaces of degree \(n\) in \(\mathbb{C P}^2\). If \(g_s\) is a sequence with \(E(g_s) \to 0 \) as \(s \to \infty \) the we can relabel so that the corresponding \(t_j \to 1\) for \(j=1, 2, 3\)  and \(t_j \to 0\) if \(j \geq 4\) as \(s \to \infty\), which corresponds to the sequence \(g_s\) converging (after rescaling to avoid collapsing) to \(\mathbb{C}^{*} \times \mathbb{C}^{*} \) with its flat metric \( (S^1 \times \mathbb{R})^2 \).

\subsection{Parabolic bundles} Let \(X\) be a complex surface and \( D = \cup_k D_k \subset X \) a finite collection of smooth irreducible complex curves with normal crossing intersections. Let \(E\) be a rank two holomorphic vector bundle on \(X\). A collection of holomorphic line sub-bundles \(L_k \subset E|_{D_k} \) together with weights \( 0 < \alpha_k < 1 \) endows \(E\) with the structure of a \emph{parabolic bundle}. Much of the standard theory on holomorphic vector bundles has been extended to this setting. In particular there are parabolic versions of: Chern classes, slope stability, Bogomolov-Gieseker inequality and Hitchin-Kobayashi correspondence. See \cite{panov} and references therein.

It is a standard fact that \(\omega\) is a smooth KE on \(X\)  if and only if \((TX, h)\) is a Hermitian-Einstein vector bundle over \((X, \omega)\) where \(h\) is the Hermitian metric defined by \(\omega\). In particular, the tangent bundle of a KE manifold is slope stable with respect to the polarization determined by the K\"ahler class. There is a natural extension to the case of a KEcs on \(X\) with cone angle \(2\pi\beta\) along a smooth divisor \(D \subset X \). The parabolic structure on \(TX\) is given by \(L = TD \subset TX|_D\) with weight \(\alpha=1-\beta\). The KEcs metric induces a Hermitian metric on \(TX\) compatible with the parabolic structure and it satisfies the Hermitian-Einstein equations. See section 6 in \cite{kellerzheng}.

In the more general setting of a weak KEcs metric on a klt pair we expect to be a natural parabolic structure (endowed with a compatible Hermitian-Einstein metric) on the pull-back of the tangent bundle to a log resolution of the pair. We restrict the discussion to our case at hand, namely  a weighted line arrangement such that \((\mathbb{CP}^2, \sum_{j=1}^{n}(1-\beta_j)L_j)\) is a log Calabi-Yau klt pair. We follow \cite{panov}:

Let \( \pi: X \to \mathbb{CP}^2 \) be the blow-up at the multiple points \(x_i\) of the arrangement with multiplicity \( d_i \geq 3\). Let \(D = \cup_{j=1}^{n+m}D_j \subset X \) be the union of the proper transforms of the lines \(D_1, \ldots, D_n\) together with the exceptional divisors \(D_{n+1}, \ldots, D_{n+m}\). We let \(E = \pi^{*} T \mathbb{CP}^2 \) and note that \(E|_{D_j} \cong TX|_{D_j} \) if \(j=1, \ldots, n\) and \(E|_{D_j} \cong \underline{\mathbb{C}}^2\) (the trivial rank two vector bundle) if \(j \geq n+1\). We define a parabolic structure on \(E\), denoted by \(E_{*}\), by letting \( L_j = TD_j \), \(\alpha_j = 1-\beta_j\) if \(j=1, \ldots, n\) and for \(j \geq n+1\) we set \(L_j = \{0\}\), \(\alpha_j = 1 - \gamma_{i} \) where \(\pi(D_j) = x_i \) and  \(\gamma_i\) is given by Equation \eqref{angle mult points}. The parabolic Chern numbers of \(E_{*}\) are computed in Proposition 7.1 in \cite{panov},  \(par ch_1 (E_{*}) = 0\) and \(parch_2(E_{*})\) is equal to the right hand side in Equation \eqref{energy formula}. It is proved in \cite{panov} that \(E_{*}\) is stable with respect to an appropriate polarization. The identity \(E(g_{RF}) \geq 0\) is equivalent to the Bogomolov-Gieseker inequality
\begin{equation*}
parch_2(E_{*}) - \frac{1}{2} parch_1^2(E_{*}) \geq 0 .
\end{equation*}

\section{General Picture (\(\dim_{\mathbb{C}} =2 \))} \label{conjectural section}

\subsection*{Weak KE metrics on klt pairs}
Let \(X\) be a projective surface, for simplicity we restrict to the case where \(X\) is smooth (rather than normal). Consider an  \(\mathbb{R}\)-divisor \(D=\sum_{j=1}^{n}(1-\beta_j)D_j\), where   \( D_j \subset X \) are distinct complex irreducible curves and \(0 < \beta_j < 1 \). Write \( C = \mbox{Supp}(D)= \cup_{j=1}^n D_j \) and let \( \mbox{Sing}(X, D) \) be the finite set of points at which the curve \(C\) is singular, that is the points at which either one of the irreducible curves is singular or where at least two intersect. In complex coordinates centred at \(p \in C\) we have defining equations \(D_j = \{ f_j =0 \} \) ) and \(C =  \{f=0\} \) with \( f= \prod_{j=1}^{n}f_j \) . The pair \((X, D)\) is said to be klt if for every \( p \in \mbox{Sing}(X, D) \) the function \( \prod_{j=1}^{n} |f_j|^{2\beta_j -2} \) is locally Lebesgue integrable around \( 0 \in \mathbb{C}^2 \). At non-singular points of \(C\) this local integrability condition is automatically satisfied since \( 0 < \beta_j <1 \). 

The first Chern class of the pair is defined as 
\begin{equation}
c_1 (X, D) = c_1(X) - \sum_{j=1}^{n} (1- \beta_j)c_1([D_j]) \in H^{1,1}(X, \mathbb{R}).
\end{equation}
A weak conical KE metric on \((X, D)\) is a smooth K\"ahler-Einstein metric with \( \mbox{Ric}(g) = \lambda g \) on the complement of \(C\)  such that in complex coordinates \((z, w)\) around \( p \in C\), \(g\) has a continuous K\"ahler potential and
\[ dV_g = F \prod_{j=1}^{n} |f_j|^{2\beta_j -2} dz dw \overline{dzdw} \] 
with \(F\) a positive continuous function. We consider the following cases:

\begin{enumerate}
	\item \(c_1(X, D) <0 .\) There is a unique weak conical KE metric with \( \lambda=-1 \). 
	\item \(c_1(X, D) = 0 .\) In each K\"ahler class there is a unique weak conical KE metric with \( \lambda=0 \).
	\item \(c_1(X, D) >0 \) and \((X, D)\) is log K-polystable. It is conjectured and partially proved (see \cite{SSY} for the smoothable setting for \( D\) plurianticanonical and, in more generality, the recent pre-print \cite{litianwang}) that there is a unique weak conical KE metric with \( \lambda=1 \).
\end{enumerate}

The weak KE metric induces a distance \(d\) on \(X \setminus C \) and it is natural to expect that the metric completion of \((X \setminus C, d)\) is homeomorphic to \(X\).  We want to understand the metric tangent cones of \(g\) at points in \(C\). Without any further assumptions, if \( p \in D_j \setminus \mbox{Sing}(X, D) \) then the regularity theory for conical metrics implies that \( T_p (X, g) =  \mathbb{C}_{\beta_j} \times \mathbb{C}  \). At present, there is no result that always guarantees the existence of a tangent cone at points in \(\mbox{Sing}(X, D)\). If  we assume \(g\) is a non-collapsed limit of a sequence of KE metrics with cone angle \(2 \pi \beta \) along smooth curves \( C_{\epsilon} \subset X \) with \(C_{\epsilon} \to C = \cup_{j=1}^n D_j \) as \(\epsilon \to 0 \), then we can appeal to \cite{CDS1} and \cite{CDS} to ensure the existence of tangent cones; but this imposes strong restrictions on the angles \(\beta_1, \ldots, \beta_n \) (if \(C_{\epsilon}\) converges to \(D_j\) with multiplicity \(k\) then \( 1- \beta_j = k(1-\beta)\)). 
More generally, one could consider the case when \(g\) is the limit of KEcs on log smooth pairs and appeal to results announced in \cite{litianwang}.

\subsection*{Dependence of \(C(Y)\) on the singularity and the value of the cone angle}

In analogy to \cite{DSII} we expect that the tangent cone \( T_p (X,g) \) is uniquely determined by the curve singularity at \(p\) and the cone angle parameters. In higher dimensions we expect  \(T_p (X, g) = C(Y) \) to be a metric cone over a `conical Sasaki-Einstein' manifold \((Y, \overline{g})\). In our case, it is possible to argue that \(C(Y)\) must be a PK cone. 

From an algebro-geometric point of view, by the recent theory developed in \cite{Li15}, the tangent cone \( T_p (X,g) \) could be in principle understood by looking at the \emph{infimum of normalized volume of valuations} centered at the point $p \in (X,D)$.

$$\hat{\mbox{vol}}((X,D),p):= \inf_{\nu\in Val_p}A_D^n(\nu)\, \mbox{vol}(\nu)>0, $$
where $A_D(\nu)$ is the log-discrepancy of a valuation and $$\mbox{vol}(\nu):=\limsup_{r\rightarrow 0}\frac{ \mbox{length}(\mathcal{O}_{V,p}/\{f|\nu(f)\geq r\})}{r^n/n!}$$ its volume.  The infimum is actually a minimum \cite{Blum}. This new invariant of klt singularities depends only on the local germ of the pair at $p$, and it correspond to the value of the local density $\nu$ of the metric at $p$ up to the multiplicative constant $n^n$ \cite{heinsun, LiXu}.  

Jumping phenomena of the tangent cone, i.e.,  when the tangent cone is not locally biholomorphic to a neighbourhood of the singularity, can be seen to happen already in our two dimensional log case. 

In the following conjectural examples we fix \(f \in \mathcal{O}_{\mathbb{C}^2, 0} \) and study the natural different \(C(Y)\)s cone associated to such curve as we vary the angle $\beta$. Naively, our candidate for \(C(Y)\) is the PK cone whose conical set `best approximates' \(\{f=0\}\). We write \( f = P_d + \mbox{higher order terms} \),  with \(P_d\) homogeneous of degree \(d = \mbox{ord}_0(f) \geq 2 \).

\subsubsection*{Case \(f\) is irreducible in \(\mathcal{O}_{\mathbb{C}^2, 0}\)} We change coordinates so that \(P_d = w^d\). We have a Puiseux series \( w = a_e t^e + \ldots \) with \( t = z^d \) and \( d < e\). The curve \(C = \{f=0\}\) is `approximated by' \( C' = \{ w^d = a z^e \}\) for some \(a \in \mathbb{C}\). The klt condition on the pair \((\mathbb{C}^2, (1-\beta)C)\) requires that \(\beta > 1 - 1/d - 1/e \). Indeed \( \beta > 1 - \mbox{l.c.t.}(f, 0)\), where 
\[ \mbox{l.c.t.}(f, 0) = \sup \{ c > 0 \hspace{1mm} \mbox{s.t.} \hspace{1mm} |f|^{-2c} \in L^1_{loc}  \}  \]
is the \emph{log canonical threshold} given by 
\begin{equation}\label{eq:lct}
\mbox{l.c.t.} (f, 0) = \frac{1}{d} + \frac{1}{e} , 	
\end{equation}
see \cite{kollar}, indeed Equation \eqref{eq:lct} goes back to Igusa \cite{Igusa}.

Let us try to guess the natural choice of the best approximating $C(Y)$ cones from differential geometry.
When \( 1 - 1/d - 1/e < \beta < 1 - 1/d + 1/e=: \beta^{*} \) there is a quasi-regular PK cone metric \(g_F\) with cone angle \( 2 \pi \beta \) along \(C'\) and we set \(C(Y)=g_F\). It corresponds to a spherical metric \(g_{1/e, 1/d, \beta}\) on the Riemann sphere with three conical singularities of angle \( 2 \pi (1/e), 2 \pi (1/d) \) and \(2 \pi \beta \). More precisely, \(g_{1/e, 1/d, \beta}\) lifts to a regular PK cone in \(\mathbb{C}^2\), \(\tilde{g}_F\), with cone angles \( 2 \pi (1/e) \) along \(\{u=0\} \), \( 2 \pi (1/d) \) along \(\{v=0\} \) and \( 2 \pi \beta \) along \(\{v=a u\} \). Let \( (u, v) = \Phi (z, w) = (z^e, w^d) \), then \( g_F = \Phi^{*} \tilde{g_F} \).  

It follows from elementary spherical geometry that 
\[ \lim_{\beta \to \beta^{*}} g_{1/e, 1/d, \beta} = g_{1/e, 1/e}\]
in the Gromov-Hausdorff sense.
Indeed, the spherical triangle with angles \((\pi/e, \pi/d, \pi \beta)\) degenerates as \(\beta \to \beta^*\) (in the Hausdorff sense) to a spherical bigon with angles \((\pi/e, \pi/e)\), see Section 3.1.3 in \cite{MP}. The spherical metric \(g_{1/e, 1/e}\) lifts through the Hopf map to the regular PK cone \( \tilde{g_F} = \mathbb{C}_{1/e} \times \mathbb{C}_{1/e} \) and \( \Phi^{*} \tilde{g_F} = \mathbb{C} \times \mathbb{C}_{d/e} \). Let \( \gamma^{*} = d/e \), note that \( 1 - \gamma^{*} = d (1 - \beta^{*}) \) which agrees with the picture of the \(d\) conical branches of \(C\) coming together.

Therefore, for \( \beta^{*} \leq \beta < 1 \),  we naturally set $$ C(Y) = \mathbb{C} \times \mathbb{C}_{\gamma^{*}} $$ with \( 1 - \gamma^{*} = d (1 - \beta) \). Thus \(\beta=\beta^{*}\) is the value of the parameter for which see the jumping of the best approximating cone $C(Y)$.

Note that the volume density of such Calabi-Yau cones when \(\beta\) varies is given by:

\begin{equation} \label{vol dens irred f}
\nu(f, \beta) = \begin{cases}
e  d \gamma^2 \quad \hspace{5mm} \mbox{if } 1- \mbox{l.c.t}(f, 0) < \beta < \beta^{*} \\
\gamma^{*} \quad \hspace{9.5mm} \mbox{if } \beta^{*} \leq \beta <1 \\
\end{cases}
\end{equation}
where \(2\gamma = 2 + (1/e-1) + (1/d-1) + (\beta -1) \) and \( \gamma^{*} = d \beta + 1 -d\). 

Now we try to make a similar computation using valuations. Similarly to \cite{Li15} section \(5\), we can consider monomial valuations induced by the \((x,y)\)-weighted scaling action on the ambient \(\mathbb{C}^2\). We have \(\mbox{vol}(\nu_{x,y})=1/xy\), and the log-discrepancy is equal to \(A_D(\nu_{x,y})= x+y-(1-\beta)\min\{dx,ey\}\). Thus the computation which should give the volume density  corresponds to \emph{minimize} the function
\begin{equation}
F(x,y)=\frac{(x+y-(1-\beta)\min\{dx,ey\})^2}{4xy}
\end{equation}
for \(x,y>0\). It is now straightforward to check that, when \(\beta<\beta^*\), the minimum is achieved at \(y=\frac{d}{e}x\) and the value of \(F\) at that point is equal to \(\frac{(e+d+ed(\beta-1))^2}{4ed}\), while for \(\beta\geq\beta^*\)  the minimum is achieved at \(y=(d\beta+1-d)x\)  and the value of \(f\) at that point is equal to \(d\beta+1-d\). Thus the minimum of \(f\) coincides with the volume density computed before, as it should be.

\begin{remark}
	Assuming the existence of tangent cones, the above algebraic computation, thanks to the results of \cite{LiXu}, can possibly be used to see that the tangent cones agrees with the one we described. However, even in such case and under non-jumping assumptions, the result of our main Theorem \ref{thm1} are stronger, since they give more information on the asymptotic of the metric.
\end{remark}

\subsubsection*{Ordinary \(d\)-tuple points. \( \{P_d =0\} \) consists of \(d\) distinct complex lines \(L_1, \ldots, L_d\)} Let \( 0 < \beta_1 \leq \beta_2 \leq \ldots \leq \beta_d < 1 \). The klt condition  requires that \( 	\sum_{j=1}^{d}(\beta_j -1) > -2 .  \)
We introduce the klt region
\begin{equation*}
\mathcal{K}= \{ (\beta_1, \ldots, \beta_d) \in (0,1)^k \hspace{2mm} \mbox{s.t.} \hspace{2mm} 	-2 < \sum_{j=1}^{d} (\beta_j -1) \}
\end{equation*}
and the Troyanov region
\begin{equation*}
\mathcal{T}= \{ (\beta_1, \ldots, \beta_d) \in (0,1)^k \hspace{2mm} \mbox{s.t.} \hspace{2mm} 	0 < 2 + \sum_{j=1}^{d} (\beta_j -1) < 2 \min_{j} \beta_j \} .
\end{equation*}
These are convex polytopes and \( \mathcal{T} \subset \mathcal{K} \). If \( (\beta_1, \ldots, \beta_d) \in \mathcal{T} \) there is a unique spherical metric \(g_{\beta_1, \ldots, \beta_d}\) on \( \mathbb{CP}^1 \) with cone angles \(2\pi \beta_1, \ldots, \beta_d \) at \(L_1,\ldots, L_d \) and we set \(C(Y)\) to be the corresponding regular PK cone. The area of \(g_{\beta_1, \ldots, \beta_d}\) is \(4 \pi \gamma \) where \(\gamma\) is given by Equation \eqref{number c}. We divide \( \partial \mathcal{T} = \partial_c \mathcal{T} \cup \partial_{nc} \mathcal{T} \) into a collapsing \(\partial_c \mathcal{T} = \{ \sum_{j=1}^{d}(\beta_j -1) = -2 \} \) and a non-collapsing \(\partial_{nc} \mathcal{T} = \{ \sum_{j=1}^{d}(\beta_j -1) > -2 \} \) part. The k.l.t. condition keeps us away from \( \partial_c \mathcal{T} \). On \( \partial_{nc} \mathcal{T} \) we have \( 1- \beta_1 = \sum_{j=2}^{d} (1- \beta_j) \) and \( \lim_{(\beta_1, \ldots, \beta_d) \to \partial_{nc} \mathcal{T} } g_{\beta_1, \ldots, \beta_d} = g_{\beta_1, \beta_1} \). Same as before, when \(2 \beta_1 \geq 2 + \sum_{j=1}^{d}(1-\beta_j) \) we set \(C(Y)= \mathbb{C}_{\beta_1} \times \mathbb{C}_{\gamma^{*}} \) where \(  1- \gamma^{*} = \sum_{j=2}^{d} (1- \beta_j) \). Finally, the volume density function is 
\begin{equation} \label{vol dens ord}
\nu(f, \beta_1, \ldots, \beta_d) = \begin{cases}
\gamma^2 \quad \hspace{2mm} \mbox{if } (\beta_1, \ldots, \beta_d) \in \mathcal{T} \\
\beta_1 \gamma^{*}  \quad  \mbox{if } (\beta_1, \ldots, \beta_d) \in \mathcal{K} \setminus \mathcal{T} \\
\end{cases}
\end{equation}

One can perform computations using valuations similar to the one we did before, and see the emergence of Troyanov's stability conditions.

\subsubsection*{Non-transverse  reducible case, e.g.,  \(f=y(y-x^2)\)} Suppose we put the parameter \(\beta_1\) along \(y=0\) and \(\beta_2\) along \(y=x^2\). Then we can apply the covering trick as before using the map \((u,v)=(x^2,y)\) and looking for PK cone metrics on \(\mathbb{C}^2\) with cone angles equal \(1/2\) along \(u=0\), \(\beta_1\) along \(v=0\), and  \(\beta_2\) along \(u=v\). Using the Troyanov conditions we see that we  have a PK cone metric if \((\beta_1,\beta_2)\) are in the squared rhombus 
\(\beta_2> \pm \beta_1 \mp \frac{1}{2}\) and \(\beta_2< \pm \beta_1 +1 \mp \frac{1}{2}\). The value of the metric density is  equal to \(\nu=\frac{1}{8}(2\beta_1+2\beta_2-1)^2\), and it vanishes precisely on the collapsing boundary corresponding to the klt condition, as usual. There are three different possibilities in which a sequence of parameters \((\beta_1, \beta_2)\) lying in the interior of the rhombus can hit its boundary, corresponding to the jumping of the tangent cones: if we leave from the wall \(\beta_2=-\beta_1+\frac{3}{2}\), by using similar argument as before, we see that tangent cone should be given by \(\mathbb{C}\times \mathbb{C}_{\beta_2+\beta_1-1} \), corresponding to the geometric naive picture of cone angles merging. On the other hand, leaving from the boundary \(\beta_2=\beta_1+\frac{1}{2}\), we see that the tangent cone should be given by the product of two flat cones \( \mathbb{C}_{2\beta_2-1} \times \mathbb{C}_{\beta_1} \). By symmetry, leaving at \(\beta_2=\beta_1-\frac{1}{2}\) gives as cones \( \mathbb{C}_{2\beta_1-1} \times \mathbb{C}_{\beta_2} \). \\

Finally, one should consider the general case where we have a bunch of singular non-reduced non-transverse local curves at $p$. We expect that by performing similar analysis as we did in the previous paragraph, one could give a nice description of the expected behaviour of the tangent cones. However, we haven't investigated in details this situation further. \\

As we mention in the introduction, the actual proof of our main Theorem \ref{thm1} doesn't make deep use of the fact that the ambient space is the projective space or that the singularities are multiple points of lines. Thus, in more generality, we can state the following

\begin{theorem}\label{gen case}
	Let \((X,D)\) be a log klt Calabi-Yau surface with smooth ambient space \(X\). Assume that locally analytically the singularities are modeled on PK cones. Then, for each K\"ahler class, there exist a unique Ricci-flat metric asymptotic to the PK model cones.

	In particular, for example,  \(D\) could have \(A_k\)-singularities \(w^2=z^{k+1} \) in the `stable range': $$\frac{k-1}{2k+2} <\beta < \frac{k+3}{2k+2}.$$
	
\end{theorem}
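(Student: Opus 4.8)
The plan is to repeat the proof of Theorem~\ref{thm1} essentially word for word, the point being that every step is either local on $X$ or uses only the fact that, near each point $p_i\in\mathrm{Sing}(X,D)$, the pair $(X,D)$ is biholomorphic to a neighbourhood of the apex of a PK cone $g_{F_i}$ with the conical sets matched according to their angles; projective space and the straightness of the lines are never used. Fix a K\"ahler class, with smooth representative $\eta$. Since $c_1(X,D)=0$ there is a (multivalued) holomorphic volume form $\Omega$ for which $dV=\Omega\wedge\overline{\Omega}$ is a globally well-defined measure with the prescribed pole type $|f_j|^{2\beta_j-2}$ along each $D_j$, and the equation to be solved is $\omega_{RF}^2=\Omega\wedge\overline{\Omega}$ with $\Omega$ normalised so that $\int_X\Omega\wedge\overline{\Omega}=\int_X\eta^2$. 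First I would build a reference metric $\omega_{ref}$ cohomologous to $\eta$ exactly as in Section~\ref{reference metrics}: start from $\eta$, add $\delta\, i\dd$ of a power of the norm of a defining section away from $\mathrm{Sing}(X,D)$, and glue in the PK cone potentials $r_i^2$ near each $p_i$ via the assumed biholomorphisms and cut-off functions. Claims~\ref{claim ref 1} and~\ref{claim ref 2}, Lemma~\ref{ref met lemma}, and the model curvature computation of Section~\ref{reference metrics} are all local, hence carry over verbatim: $\omega_{ref}$ has cone angle $2\pi\beta_j$ along $D_j^{\times}=D_j\setminus\mathrm{Sing}(X,D)$ in the sense of Definition~\ref{def kmcs}, agrees with $g_{F_i}$ near $p_i$, and has bisectional curvature bounded above. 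Lemma~\ref{ricci lemma} then gives $\omega_{ref}^2=e^{-f}\Omega\wedge\overline{\Omega}$ with $f$ locally H\"older continuous and equal to a pluriharmonic function plus a constant near each $p_i$, and the diffeomorphism argument of Section~\ref{reference metrics} yields the weighted Sobolev inequality~\ref{Sob ineq}.

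Next I would transcribe the linear theory of Section~\ref{linear thry sct}: Donaldson's interior Schauder estimates are purely local, Proposition~\ref{estimate cone} is a statement about model PK cones, and Lemma~\ref{hom harm lemma} together with the analysis of $2$-homogeneous harmonic functions is already formulated for an arbitrary PK cone. Hence Proposition~\ref{linearthm} holds, with the finite-dimensional extensions built from the spectra of the links $(Y_i,\overline{g}_i)$ of the cones $g_{F_i}$. With openness in hand, the continuity method of Section~\ref{yau cont path sect} runs identically: perturb $\omega_{ref}$ to an initial metric $\omega_0$ of uniformly bounded Ricci curvature, form the path $\omega_t^2=c_te^{tf_0}\omega_0^2$, and prove the a priori estimates of Proposition~\ref{apriori} --- the $C^0$ bound by Moser iteration against the Sobolev inequality, the $C^2$ bound by the Chern--Lu inequality using $\mathrm{Bisec}(\omega_{ref})\le C$, the lower bound $\mathrm{Ric}(\omega_t)\ge -C\omega_{ref}$ and Jeffres' barrier trick at the conical set, the $C^{2,\alpha}_{loc}$ bound by interior Schauder for the Monge--Amp\`ere operator, and the two weighted estimates by weighted Moser iteration and the nonlinear analogue of Lemma~\ref{Est 1}. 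Then $T=[0,1]$, producing a Ricci-flat $\omega_{RF}$ in the given class satisfying the asymptotics~\ref{asymptotics}; uniqueness within the class follows from the uniqueness of weak Ricci-flat metrics on klt log Calabi-Yau pairs~\cite{EGZ}, since the metric we construct has a continuous local potential and solves the same complex Monge--Amp\`ere equation.

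I do not expect a new analytic obstacle, as the hypothesis ``modelled on PK cones'' is designed to isolate exactly the structure used in Theorem~\ref{thm1}; the delicate point, as there, is the upper bound on the bisectional curvature of $\omega_{ref}$ at the interface points $p\in D_j^{\times}\cap\partial(\epsilon B_i)$ where the metric transitions from the flat PK cone to the region built from a smooth K\"ahler form, and this is handled by the model computation of Section~\ref{reference metrics}, which is local and uses neither that $D_j$ is a line nor that the ambient space is $\mathbb{CP}^2$. One should also check at the outset that the assumed local models patch into a global reference metric in the chosen class, but this is immediate from the $i\dd$-structure of the construction.

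For the $A_k$ example $f=w^2-z^{k+1}$ the stated interval is precisely the range in which the singularity is modelled on a PK cone. By~\cite{kollar} the log canonical threshold at $0$ is $\mathrm{l.c.t.}(f,0)=\tfrac12+\tfrac{1}{k+1}$, so the klt condition reads $\beta>1-\mathrm{l.c.t.}(f,0)=\tfrac{k-1}{2k+2}$. If $k$ is even the curve is irreducible with $\mathrm{ord}_0=2$ and Puiseux exponent $k+1$, so the discussion of the irreducible case in Section~\ref{conjectural section} applies with $(d,e)=(2,k+1)$: for $\beta<\beta^{*}=1-\tfrac1d+\tfrac1e=\tfrac{k+3}{2k+2}$ there is a quasi-regular PK cone $g_F=\Phi^{*}\tilde g_F$, with $\Phi(z,w)=(z^{k+1},w^2)$ and $\tilde g_F$ the regular cone over the spherical metric on $\mathbb{CP}^1$ with three cone points of angles $\tfrac{2\pi}{k+1}$, $\pi$ and $2\pi\beta$, whose conical set is exactly $\{w^2=z^{k+1}\}$ with angle $2\pi\beta$. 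If $k$ is odd the singularity consists of two smooth branches $w=\pm z^{(k+1)/2}$; the branched cover $(u,v)=(z^{(k+1)/2},w)$ sends them to the transverse lines $v=\pm u$, and the Troyanov conditions for the spherical metric with angles $\tfrac{4\pi}{k+1}$, $2\pi\beta$ and $2\pi\beta$ hold precisely for $\tfrac{k-1}{2k+2}<\beta<\tfrac{k+3}{2k+2}$, again giving a PK model cone in that range. In either case the hypothesis of Theorem~\ref{gen case} is satisfied, so the theorem furnishes the Ricci-flat metric asymptotic to $g_F$.
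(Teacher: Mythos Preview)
Your proposal is correct and follows the same approach as the paper, only far more explicitly: the paper's own proof is little more than a sketch, constructing the Calabi--Yau measure $\mu_{CY}=e^{-f}|s_1|^{2\beta_1-2}\cdots|s_n|^{2\beta_n-2}dV_0$ via the $\partial\overline\partial$-lemma (this is the one place where an argument replaces the explicit formula~\eqref{hol vol form def} for $\Omega$ available on $\mathbb{CP}^2$) and then asserting that Sections~\ref{reference metrics}--\ref{yau cont path sect} go through with essentially no changes. Your assertion that ``since $c_1(X,D)=0$ there is a multivalued holomorphic volume form $\Omega$'' is exactly this $\partial\overline\partial$-lemma step; the paper spells it out, you take it for granted, but the content is the same. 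Your detailed verification of the $A_k$ stable range, including the even/odd dichotomy and the covering maps producing the quasi-regular PK model, is not in the paper's proof (the paper only states the range) and is a welcome addition.
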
 

\begin{proof} 
	Write \(D = \sum_{j=1}^{n}(1-\beta_j)D_j \). Let \(dV_0\) be a smooth volume form on \(X\) and \(h_j\) be smooth Hermitian metrics on \([D_j]\) together with defining sections \(s_j \in H^0([D_j]) \) of \(D_j\). We consider the measure \(\mu_0 = |s_1|^{2\beta_1-2} \ldots |s_n|^{2\beta_n-2} dV_0 \). The Calabi-Yau condition implies that \(\dd \log \mu_0 \) is trivial in de Rham co-homology. The \(\dd\)-lemma guarantees the existence of a smooth function \(f\), unique up to addition of a constant, such the measure \( \mu_{CY} = e^{-f}\mu_0 \) satisfies \(\dd \log \mu_{CY} \equiv 0 \). Since a pluri-harmonic function is locally the real part of an holomorphic function, it is easy to check that in the complement of \(D\) we can write \(\mu_{CY} = \Omega \wedge \overline{\Omega} \) for a locally defined holomorphic volume form \(\Omega\). 
	
	Thus we are looking to solve the equation $$ \omega_{CY}^2=\mu_{CY}.$$
	
	The construction of the background metric (see section \ref{reference metrics}), the linear analysis and the continuity path (see section \ref{yau cont path sect}) generalizes with essentially no changes. 
	
\end{proof}

For example, the above theorem constructs a Ricci-flat metric on $(\mathbb{C P}^2, (1-\beta_1) C_1+ (1-\beta_2) C_2)$  where $C_2$ is a generic line, $C_1$, say, the projectivization  of the cusp $w^2=z^3$, and the cone angles satisfy the conditions $3(1-\beta_1)+(1-\beta_2)=3$, $1/6< \beta_1 < 5/6$ and $0<\beta_2<1$.

\subsection*{Chern-Weil formula and constant holomorphic sectional curvature}

The discussion about Equation \eqref{energy formula} suggests a natural generalization of the energy formula to the more general setting of a weak KEcs on a two dimensional klt pair \((X, \sum_{j=1}^{n} (1-\beta_j) D_j)\). We propose the following.

\begin{conjecture}\label{conj:E2}
	We expect that
	\begin{equation} \label{CHERNWEILintro}
	3 c_2 (X, D) - c_1 (X, D)^2 = \frac{3}{8 \pi^2} \int_{X} |\mathring{ \mbox{Rm} }|^2 dV_g
	\end{equation}
	where \( \mathring{ \mbox{Rm} } \) is the trace free Riemann curvature tensor and we have introduced the `logarithmic' second Chern class 
	\begin{equation} \label{log c2} 
	c_2(X, D) = \chi (X) + \sum_{j=1}^{n} (\beta_j -1) \chi (D_j \setminus S ) + \sum_{p \in S} (\nu(p) -1) 
	\end{equation}
	with \( S = \mbox{Sing}(X, D) \)  the finite set of points where the curve \(C = \cup_{j=1}^n D_j \) is singular and 
	we have identified \( c_1(X, D)^2 \in H^{2, 2}(X, \mathbb{R}) \cong \mathbb{R} \) via integration over \(X\).
\end{conjecture}

Conjecture \ref{conj:E2} is considerable more difficult to prove than Conjecture \ref{conj:E} 
because the extra difficulty caused by the tangent cone jump at a point \(p \in S\), however recent advances following \cite{szekelyhidi} (in particular \cite{dBE2}) shed some light in this direction. 

An immediate consequence of Equation \ref{CHERNWEILintro} is a logarithmic version of the Bogomolov-Miyaoka-Yau inequality which matches with theorem 0.1 in \cite{Langer}. More precisely Langer asserts that 
\begin{equation} \label{langer}
3 e_{orb}(X, D) \geq (K_X + D)^2
\end{equation}
where  the orbifold Euler number is defined as
\begin{equation*}
e_{orb} (X, D) = \chi (X) + \sum_{j=1}^{n} (\beta_j -1) \chi (D_j \setminus S ) + \sum_{p \in S} (e_{orb}(p; X, D) -1) 
\end{equation*}
and the local orbifold Euler number \(e_{orb}(p; X, D)\) is an analytic invariant of the singularity defined in terms of Chern numbers of sheaves of rational \(1\)-forms with logarithmic poles. Equation \eqref{langer} is an immediate consequence of Equation \eqref{CHERNWEILintro} provided that the local orbifold Euler number agrees with the volume density. We have checked that \( e_{orb} (p; X, D) = \nu(p) \) in all cases contemplated in sections 8 and 9 in \cite{Langer}.

\begin{remark}
	Soon after our preprint Chi Li \cite{ChiLi} proved that the two quantities agree for klt, and even more general log canonical, pairs invariant under a \(\mathbb{C}^*\) action.
\end{remark}

Our approach has the advantage of interpreting the equality case in Equation \eqref{langer} as a characterization of the conically singular constant holomorphic sectional curvature metrics and provides a unified framework for previous treatments on these interesting geometries (which to the authors' knowledge have only been considered in \cite{panov} and \cite{CouwenbergHeckmanLooijenga}). Given a pair we can regard the quantity \( 3c_2(X, D) - c_1(X, D)^2 = F (\mu_1, \ldots, \mu_n) \) as a positive semi-definite quadratic function of the weights \(\mu_j = 1-\beta_j\). If \(X = \mathbb{CP}^2 \) then \(F(0) =0 \) and if the divisor is made up of lines then the combinatorics of the arrangement determines \(F\). The arrangements for which the main diagonal \((\mu, \ldots, \mu)\) is in the kernel of \(F\) were characterized by Hirzebruch by \(n=3k\) and each
line intersects the other at exactly \(k+1\) points; when  \( \beta = \beta_j = (k-1)/k\) for all \(j\) (CY case) Panov showed the existence of the corresponding PK metric. We further expect that for any of these arrangements the diagonal give us a one-parameter family of constant holomorphic sectional curvature metrics, positive if \((k-1)/k < \beta < 1 \) and negative if \( \beta < (k-1)/k \). If \(d\) denotes the maximum number of lines intersecting at one point then the klt condition is equivalent to \( d(1-\beta) < 2 \); as \(\beta\) tends to the lower bound \( (d-2)/d \) we expect the metrics to converge to conical complex hyperbolic metrics with cuspidal points previously discovered by Hirzebruch.

\subsection*{Chern-Weil formulas and weighted  invariants  in higher dimensions}

We expect in higher dimension that the BMY inequality in the logarithmic setting (as the one considered in \cite{GT}) can also be improved by modifying the second Chern class by weighting the codimension one singularities of the divisor of the pair with the volume density/infiumum of normalized volume of valuations (which should be the same at the generic point, see \cite{Liu}).

More generally, it would be interesting to consider `weighted Chern classes', using the volume density/infiumum of normalized volume of valuations, which should compute Chern-Weil integrals for nice K\"ahler metrics on klt pairs. This is interesting also in the absolute (i.e., non-conical) case.

For example, one could consider a \emph{weighted Euler Characteristic}  of the weighted klt pair $(X,D)$, with respect to the normalized volume  $\nu: X \rightarrow (0,1]$:
\begin{equation*}
e_{\nu} (X, D) = \sum_t t e(\nu^{-1}(t)) = e(X^0\setminus{D})+ \beta e(D^0)+ \dots \in \overline{\mathbb{Q}}(\beta)
\end{equation*}
Here $e$ denotes the Euler characteristic with compact support  and $\overline{\mathbb{Q}}$ the algebraic closure of the rational number.  The above formula makes sense provided that the following two natural properties hold: the infiumum of normalized volume of valuations $\nu$ is a constructible function on $X$ and $\nu$ takes only finitely many values within $(0,1]$. Moreover, since we expect the normalized volume satisfies $\nu_{X\times Y}=\nu_X\nu_Y$, we would have that the weighted Euler characteristic is multiplicative under products (similarly for finite coverings), thus giving a potentially interesting new motivic invariant of singular klt varieties.  In two dimensions, note that $e_{\nu} (X, D)=c_2(X,D)$ of formula \eqref{log c2}. 

Moreover, such invariant $e_{\nu} (X, D)$ should agree with the Chern-Weil integral of the form representing the top Chern class of a K\"ahler metric with the right tangent cones. For some results which share some similarity with the picture above described see \cite{McM}.
\appendix

\section{Polyhedral metrics on \(\mathbb{CP}^1\) and their moduli}  \label{pm sphere section}

Let \(n \geq 3\). Fix \(\beta = (\beta_1, \ldots, \beta_n) \) with \( 0 < \beta_j < 1 \) and 
\begin{equation} \label{flat condition}
\sum_{j=1}^{n}(1-\beta_j) =2 .
\end{equation}
The basic fact (see \cite{troyanovSE}) is that there is a correspondence between the moduli space of configurations of \(n\) distinct ordered points in the projective line \(\mathcal{M}_{0, n}\) and flat metrics of a fixed volume on the two sphere with cone angles \(2\pi\beta_1, \ldots, 2\pi\beta_n\). To make this correspondence a bijection one needs to quotient \(\mathcal{M}_{0, n}\) by the action of elements of the symmetric group which permute points with equal cone angle, but we shall ignore this fact (or either restrict to the case where all cone angles are distinct). The point is that \(\mathcal{M}_{0, n}\) has a natural structure of a non-compact complex affine variety (of complex dimension \(n-3\)) and the above correspondence provides a natural Gromov-Hausdorff completion \(\overline{\mathcal{M}}_{0, n}^{\beta, GH}\) which we would like to understand from an algebraic perspective.

The correspondence between configurations of points and polyhedral metrics is quite explicit: Given \(x_1, \ldots, x_n \in \mathbb{CP}^1 \) we take an affine coordinate \(\xi\) such that \(\xi (a_j) = x_j\) with \(a_j \in \mathbb{C}\) and define
\begin{equation} \label{polyhedral metric eq}
g = |\xi-a_1|^{2\beta_1-2} \ldots |\xi-a_n|^{2\beta_n-2} |d\xi|^2 .
\end{equation}
It is easy to check that \(g\) is a flat metric on \(\mathbb{CP}^1\) with cone angle \(2\pi\beta_j\) at \(x_j\) and it is also easy to prove that it is unique up to scaling by a constant factor. Equation \eqref{flat condition} guarantees that \(g\) extends smoothly over \(\xi=\infty\). We make the aside remark that if \( 0 < \sum_{j=1}^{n}(1-\beta_j) < 1 \) then the metric \(g\) given by Equation \eqref{polyhedral metric eq} defines a flat metric on \(\mathbb{C}\) asymptotic to the cone \(\mathbb{C}_{\gamma}\) with \( 1 - \gamma = \sum_{j=1}^{n}(1-\beta_j) \). If \(\sum_{j=1}^{j} (1-\beta_j) =1 \) then \(g\) is asymptotic to the cylinder \(|\xi|^{-2} |d\xi|^2\). If \( \sum_{j=1}^{n}(1-\beta_j) >1 \) then \(g\) defines a flat metric on \(\mathbb{CP}^1\) with cone angle \(2\pi\beta_{\infty}\) at \(\xi=\infty\) with \( 1 + \beta_{\infty} =  \sum_{j=1}^{n}(1-\beta_j) \).
We can write the volume form of \(g\) as \(\omega = i \Omega \wedge \overline{\Omega}\) where \(\Omega\) is the locally defined holomorphic \(1\)-form 
\begin{equation} \label{pol met vol}
\Omega = (\xi-a_1)^{\beta_1 -1} \ldots (\xi- a_n)^{\beta_n -1} d\xi .
\end{equation}

Any metric given by Equation \eqref{polyhedral metric eq} is isometric to either the boundary of a convex polytope in \(\mathbb{R}^3\) or the double of a polygon, see \cite{thurstonSP, troyanovSE} and references therein. The second case arises precisely when we can move the conical points with a M\"obius map so that \(a_1, \ldots, a_n \in \mathbb{R}\); then \(g\) is invariant under \(\xi \to \overline{\xi}\) and its restriction to the upper half space is equal to the pull-back of the euclidean metric by the Riemann mapping \(F: \mathbb{H} \to R \) where \(R\) is the polygon whose double recovers \(g\) and \(F\) is the Schwarz-Christoffel integral
\begin{equation*}
F(\xi) = \int_{0}^{\xi} (\eta-a_1)^{\beta_1 -1} \ldots (\eta- a_n)^{\beta_n -1} d\eta 
\end{equation*}
which we can also think as the enveloping map for \(g\).

It was shown by Thurston \cite{thurstonSP} that \(\overline{\mathcal{M}}_{0, n}^{\beta, GH}\) has a natural structure of a complex hyperbolic cone manifold. There is a  smooth complex hyperbolic metric \(g_{\beta}\) on \(\mathcal{M}_{0, n}\) specified by coordinates to \(\mathbb{CH}^{n-3}\) given by the periods \(\int_{a_i}^{a_j} \Omega \). The metric \(g_{\beta}\) has conical singularities along a boundary divisor which represents collisions between at least two conical points, this divisor has a further stratification labelled by partitions \(\mathcal{P}\) of \( \{1, \ldots, n\} \) which record the set of points colliding. The conical angle of \(g_{\beta}\) at points of the boundary divisor is given explicitly in terms of \(\beta\) and \(\mathcal{P}\). If there is no \(B \subset \{1, \ldots, n\} \) such that \(\sum_{j \in B} (1-\beta_j) =1 \) then \(\overline{\mathcal{M}}_{0, n}^{\beta, GH}\) is compact and it is natural to relate it (for rational values of the angles) to the GIT quotient of \(\mathbb{P}^1 \times \ldots \mathbb{P}^1 \) with polarization \(\mathcal{O}(1-\beta_1) \times \ldots \times \mathcal{O}(1-\beta_n) \).

In higher dimensions we consider Calabi-Yau (and more generally K\"ahler-Einstein) metrics with conical singularities along hyperplane arrangements. Under  appropriate numerical conditions on the cone angles there is a correspondence between line arrangements and KEcs. In the Fano case Fujita \cite{fujita} has shown that log K-stability of a hyperplane arrangement is equivalent to GIT stability of the corresponding configuration in \((\mathbb{P}^n)^{*} \times \ldots \times (\mathbb{P}^n)^{*} \) with polarization \(\mathcal{O}(1-\beta_1) \times \ldots \times \mathcal{O}(1-\beta_n) \) and this indicates that the Gromov-Hausdorff compactification of the corresponding KEcs agrees with this GIT quotient.

\bibliographystyle{plain}
\bibliography{LAref}

\end{document}